\newcommand{\cev}[1]{\reflectbox{\ensuremath{\vec{\reflectbox{\ensuremath{#1}}}}}}
\newtheorem{theorem*}{Theorem}
\newtheorem{theorem}{Theorem}[section]
\newtheorem{lemma}[theorem]{Lemma}
\newtheorem{proposition}[theorem]{Proposition}
\theoremstyle{definition}
\newtheorem{definition}[theorem]{Definition}
\newtheorem{example}[theorem]{Example}
\newtheorem{notation}[theorem]{Notation}
\newtheorem{remark}[theorem]{Remark}
\numberwithin{equation}{section}
\theoremstyle{plain}
\newtheorem{corollary}[theorem]{Corollary}
\numberwithin{equation}{section}
\newcommand{\homeo}{\text{Homeo}}
\newcommand{\Mod}{\text{Mod}}
\newcommand{\G}{\mathcal{G}}
\newcommand{\p}{\mathcal{P}}
\newcommand{\N}{\mathcal{N}}
\newcommand{\B}{\mathcal{B}}
\newcommand{\C}{\mathcal{C}}
\renewcommand{\S}{\mathbb{S}}
\renewcommand{\l}{\operatorname{{\llbracket}}}
\renewcommand{\r}{\operatorname{{\rrbracket}}}
\newcommand{\lp}{\operatorname{{\llparenthesis}}}
\newcommand{\rp}{\operatorname{{\rrparenthesis}}}
\begin{document}

\title[Geometric realizations of cyclic actions]{Geometric realizations of \\cyclic actions on surfaces}

\author{Shiv Parsad}
\address{Department of Mathematics\\
Indian Institute of Science Education and Research Bhopal\\
Bhopal Bypass Road, Bhauri \\
Bhopal 462 066, Madhya Pradesh\\
India}
\email{shivparsad@iiserb.ac.in}

\author{Kashyap Rajeevsarathy}
\address{Department of Mathematics\\
Indian Institute of Science Education and Research Bhopal\\
Bhopal Bypass Road, Bhauri \\
Bhopal 462 066, Madhya Pradesh\\
India}
\email{kashyap@iiserb.ac.in}
\urladdr{https://home.iiserb.ac.in/$_{\widetilde{\phantom{n}}}$kashyap/}

\author{Bidyut Sanki}
\address{Department of Mathematics\\
Institute of Mathematical Sciences\\
IV Cross Road, CIT Campus, Taramani\\
Chennai 600 113 Tamil Nadu\\ 
India}
\email{bidyut.iitk7@gmail.com}

\subjclass[2000]{Primary 57M60; Secondary 57M50, 57M99}

\keywords{surface, mapping class, finite order maps, symplectic representation, fat graphs}

\maketitle

\begin{abstract}
 Let  $ \Mod(S_g)$ denote the mapping class group of the closed orientable surface $S_g$ of genus $g\geq 2$, and let $f\in\Mod(S_g)$ be of finite order. We give an inductive procedure to construct an explicit hyperbolic structure on $S_g$ that realizes $f$ as an isometry. In other words, this procedure yields an explicit solution to the Nielsen realization problem for cyclic subgroups of $\Mod(S_g)$. Furthermore, we give a purely combinatorial perspective by showing how certain finite order mapping classes can be viewed as fat graph automorphisms. As an application of our realizations, we determine the sizes of maximal reduction systems for certain finite order mapping classes. Moreover, we describe a method to compute the image of finite order mapping classes and the roots of Dehn twists, under the symplectic representation $\Psi: \text{Mod}(S_g) \to \text{Sp}(2g; \mathbb{Z})$. 
\end{abstract}

\section{Introduction}
\label{intro}
Let $S_g$ be a closed orientable surface of genus $g$, and let $\Mod(S_g)$ denote the mapping class group of $S_g$. The Nielsen realization problem~\cite{JN3} asks whether a given finite subgroup $H$ of $\Mod(S_g)$, for $g \geq 2$, can be realized as a group of isometries of some hyperbolic metric on $S_g$. Stated in other words, the question asks whether the action of $H$ on the Teichm\"{u}ller space has a fixed point. While Nielsen~\cite{JN1} solved this problem for cyclic subgroups, it was later settled for arbitrary subgroups by Kerkchoff~\cite{SK1}. Taking the existential nature of these results into consideration, a natural question is whether one can determine an explicit hyperbolic structure on $S_g$ that realizes $H$ as a group of isometries. In this paper, we answer this question for the case when $H$ is cyclic. 

For $g \geq 1$, let $H = \langle h \rangle$ be a cyclic subgroup of $\Mod(S_g)$ of order $n$ that acts on $S_g$ with a quotient orbifold~\cite[Chapter 16]{WT} $\mathcal{O}_h := S_g/H$ of genus $g_0(h)$. If there exists an imbedding of $\sigma : S_g \to \mathbb{R}^3$ under which $h$ can realized as the restriction of a rotation of $\mathbb{R}^3$, then we say that $h$ is a \textit{rotational action}.  Such an imbedding $\sigma$ induces a Riemannian metric $\rho_\sigma$ on $S_g$, whose conformal class contains a hyperbolic metric $\mu_\sigma$ (which is unique up to scaling) that realizes this rotation as an isometry. As obtaining an explicit description of such a metric is rather straightforward, we shall  focus our attention on other types of cyclic actions. Our realizations of non-rotational cyclic actions fall into one general philisophy, which involves the decompostion of the given action into its irreducible components. To this effect, we appeal to a result of Gilman~\cite{JG3} which states that an action $h$ is irreducible if, and only if $\mathcal{O}_h$ is (topologically) a sphere with three cone points. Moreover, a beautiful result of Kulkarni~\cite{RK} shows the existence of a polygon (with a suitable side-pairing) whose rotation realizes a given $C_n$-action $h$ with a common fixed point. Taking inspiration from these results, we introduce the notion of a \textit{Type 1} cyclic action $h$, for which $\mathcal{O}_h$ has three cone points, at least one of which is a distinguished cone point of order $n$. Denoting this distinguished cone point by $P$, we see that $P$ lifts under $h$ to a fixed point $P_h \in S_g$, around which $h$ induces a local rotation of $\theta_h = 2\pi c^{-1}/n$, where $\gcd(c,n)=1$. For $i = 1,2$, let $P_i \in \mathcal{O}_h$ be the remaining two cone points of orders $n_i$ that lift to orbits $O_i$ of sizes $n/n_i$ with local rotation angles $2\pi c_i^{-1}/n_i$, where $\gcd(c_i,n_i) = 1$. The following result gives a geometric realization of an arbitrary Type 1 action.
\begin{theorem*}\label{res:1}
For $g \geq 2$, a Type 1 action $h$ on $S_g$ can be realized explicitly as the rotation $\theta_h$ of a hyperbolic polygon $\p_h$ with a suitable side-pairing $W(\p_h)$, where $\p_h$ is a hyperbolic  $k(h)$-gon with
$$ \small k(h) := \begin{cases}
2n(1+2g_0(h)), & \text { if } n_1,n_2 \neq 2, \text{ and } \\
n(1+4g_0(h)), & \text{otherwise, }
\end{cases}$$
and for $0 \leq m\leq n-1$, 
$$ \small
W(\p_h) =
\begin{cases}
\displaystyle  
  \prod_{i=1}^{n} Q_ia_{2i-1} a_{2i} \text{ with } a_{2m+1}^{-1}\sim a_{2z}, & \text{if } k(h) = 2n, \text{ and } \\
\displaystyle
 \prod_{i=1}^{n} Q_ia_{i} \text{ with } a_{m+1}^{-1}\sim a_{z}, & \text{otherwise,}
\end{cases}$$
where $\displaystyle z \equiv m+qj \pmod{n}, \,q= (n/n_2)c^{-1},j=n_{2}-c_{2}$, and \\ $\small Q_r = \prod_{s=1}^{g_0(h)} [x_{r,s},y_{r,s}], \, 1 \leq r \leq n.$
\end{theorem*}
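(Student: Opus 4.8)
The plan is to build $\p_h$ directly as a hyperbolic $k(h)$-gon carrying an $n$-fold rotational symmetry whose generator is the rotation $\theta_h$, and then to read off $W(\p_h)$ from the branch data of $h$. Since $h$ has finite order $n$ and fixes the lift $P_h$ of the distinguished order-$n$ cone point $P$, Kulkarni's theorem guarantees that such a polygon-with-side-pairing exists; our task is the \emph{explicit} determination of its size $k(h)$ and of the pairing word $W(\p_h)$ in terms of the tuple $(g_0(h); (c,n),(c_1,n_1),(c_2,n_2))$. I would first record that this tuple, subject to the standard compatibility (Harvey) condition, determines $h$ up to conjugacy, so that it suffices to produce one realization with the prescribed local data.

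For the construction I would place $P_h$ at the centre of $\p_h$ and split $\p_h$ into $n$ congruent sectors, each an exact fundamental domain for $\langle\theta_h\rangle$, so that $\theta_h$ permutes the sectors cyclically and fixes $P_h$ with cone angle $2\pi/n$ at the centre (whence $P_h$ is a smooth point of $S_g$ mapping to the order-$n$ cone point downstairs). Each sector is made to carry a lift of the $g_0(h)$ handles of the quotient surface, contributing the commutator block $Q_i=\prod_{s=1}^{g_0(h)}[x_{i,s},y_{i,s}]$ whose sides are paired \emph{within} that sector in the standard way; the remaining boundary of the sector contributes the $a$-sides. A bookkeeping count then fixes $k(h)$: in the generic case each sector contributes $4g_0(h)+2$ sides, giving $k(h)=2n(1+2g_0(h))$, while when some $n_i=2$ the two boundary sides of a sector merge into one, giving $k(h)=n(1+4g_0(h))$. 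As a consistency check I would compute the Euler characteristic of the glued surface: with $F=1$ and $E=k(h)/2$ the vertex-class count comes out to $|O_1|+|O_2|=n/n_1+n/n_2$, matching the Riemann--Hurwitz relation $2g=1+n+2ng_0(h)-n/n_1-n/n_2$; in the $n_i=2$ case the order-$2$ branch points instead appear as edge midpoints fixed by the involution $h^{n/2}$, which accounts for the degenerate side-count.

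The crux --- and the step I expect to be the main obstacle --- is determining the $a$-side pairing so that the outer vertices collapse into exactly the two branch orbits $O_1,O_2$ with the \emph{correct} cone orders $n_1,n_2$ and the prescribed local rotation numbers $c_1,c_2$. I would analyse the vertex cycles of $\p_h$ under the side-pairing composed with the rotation: a cycle through the even-indexed vertices must have length $n/n_2$ and accumulate a total rotation equal to $2\pi c_2^{-1}/n_2$ around its image in $S_g$, and symmetrically for the odd-indexed vertices and $O_1$. Imposing these conditions pins down the identification $a_{2m+1}^{-1}\sim a_{2z}$ (respectively $a_{m+1}^{-1}\sim a_z$) with the offset $z\equiv m+qj\pmod n$, where the arithmetic factors $q=(n/n_2)c^{-1}$ and $j=n_2-c_2$ encode, respectively, the index of the order-$n_2$ stabilizer $\langle h^{n/n_2}\rangle$ inside $\langle h\rangle$ and the deficiency $n_2-c_2$ needed to realize the rotation number $c_2^{-1}$. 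Verifying that this single formula simultaneously yields the right cycle lengths for \emph{both} orbits, and that the induced permutation is a genuine involution on the $a$-sides (with the order-$2$ branch points, when present, occurring as the fixed midpoints of glued edges), is the delicate combinatorial heart of the argument.

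Finally, to upgrade the combinatorial model to a hyperbolic realization I would uniformize the quotient orbifold $\mathcal{O}_h$, which is hyperbolic since $\chi^{\mathrm{orb}}(\mathcal{O}_h)<0$ for $g\geq 2$, and use its fundamental polygon to assign hyperbolic side lengths and vertex angles to a single sector; assembling the $n$ rotated copies produces a convex hyperbolic $k(h)$-gon in $\mathbb{H}^2$ whose angle sum around each identified vertex class is $2\pi$, so that the side-pairing maps are orientation-preserving isometries and the quotient is a smooth hyperbolic surface isometric to $S_g$. By construction $\theta_h$ is then an honest hyperbolic rotation of $\p_h$ descending to an isometry of $S_g$ in the mapping class of $h$, which is precisely the asserted realization.
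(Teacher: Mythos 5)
Your overall strategy --- realize the action as the rotation of a polygon centred at the lift of the order-$n$ cone point, count sides per sector, check the Euler characteristic against Riemann--Hurwitz, and verify the branch data via the vertex cycles of the side-pairing --- is the same as the paper's for the irreducible case $g_0(h)=0$. But the step you yourself flag as ``the delicate combinatorial heart'' is precisely the content of the paper's proof, and you have described it rather than carried it out. Concretely, one must show (i) that under the pairing $a_{2m+1}^{-1}\sim a_{2z}$ with $z\equiv m+qj\pmod n$ the even and odd vertex classes have sizes exactly $n_1$ and $n_2$ --- in the paper this is Lemma~\ref{lg}, done via Burnside's lemma together with the computations that $2s$ has order $n_2$ in $\mathbb{Z}_{2n}$ (using $\gcd(c_2,n_2)=\gcd(c_3,n)=1$) and that $2s-2\equiv -2n(n_1-c_1)c_3^{-1}/n_1\pmod{2n}$ has order $n_1$ (using the congruence $\sum_j (n/n_j)c_j\equiv 0\pmod n$) --- and (ii) that the local rotation number induced at the order-$n_2$ orbit is $c_2^{-1}$, which requires the angle chase $a_{2m+1}^{-1}\sim t^{(n/n_2)j'}(a_{2m})$ leading to $j'\equiv -c_2\pmod{n_2}$. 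Without these computations the formula for $z$ is asserted, not verified; saying that ``imposing these conditions pins down the identification'' does not establish that the stated $z$ actually satisfies them.

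For $g_0(h)>0$ you also depart from the paper: you insert the commutator blocks $Q_i$ directly into each sector of a single polygon, whereas the paper reduces to the irreducible case via Theorem~\ref{lem:sph_act_to_g} (every Type 1 action with $g_0>0$ is of the form $\l D, g_0\r$ for an irreducible $D$) and then performs the topological surgery of Remark~\ref{rem:triv_self_comp}, attaching $n$ copies of $S_{g_0,1}$ along an orbit of size $n$; the hyperbolic structure is then the glued structure $\G(\l D,g_0\r,(\ell_1,\dots,\ell_{g_0}))$ of Theorem~\ref{thm:real_type2} rather than a single convex polygon. Your route is closer to the literal statement of the theorem, but it creates an extra obligation you do not discharge: you must check that interleaving the blocks $Q_i$ with the $a$-sides does not alter the vertex-cycle analysis of the $a$-sides (the cycles now pass through endpoints of the $x_{i,s}$ and $y_{i,s}$), and that every vertex class, including those internal to the $Q_i$, has total angle $2\pi$ so that the quotient is a smooth hyperbolic surface. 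Either supply these verifications or follow the paper's reduction to the $g_0=0$ case.
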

\noindent  

A non-rotational action $h$ that is not of Type 1, is called a \textit{Type 2} action. Our understanding of the realizations of arbitrary Type 2 actions can be motivated with the following subtle, yet intriguing, geometric construction. For $i=1,2$, consider Type 1 actions $h_i$ on surfaces $S_{g_i}$ inducing local rotational angles $\theta_i$ around points in distinguished orbits $O_i \subset S_{g_i}$ of size $k$, so that $\theta_1 + \theta_2 \equiv 0 \pmod{2\pi}$. We call such a pair of orbits as \textit{compatible}. For $1 \leq j \leq k$, we remove pairwise disjoint invariant disks $D_{i,j} \subset S_{g_i}$ around the points in $O_i$ ensuring that the $k$ pairs of disks $D_{1,j},D_{2,j}$ have the same hyperbolic area $a$ (under the structures $\p_{h_i}$), and then identify the resultant boundary components $\partial D_{1,j}$ with $\partial D_{2,j}$ that have the same length $\ell = \ell(a)$. This defines an explicit hyperbolic structure that realizes an action $h$ on $S_{g'}$, where $g' = g_1+g_2 + k-1$. We say the Type 2 action $h$ is realizable as a \textit{compatible pair $(h_1,h_2)$} of Type 1 actions. The hyperbolic structure that realizes $h$ depends on the structures $\p_{h_i}$, and the parameter $\ell$. Alternatively, one can also realize cyclic actions by identifying pairs of boundary components obtained by deleting a pairwise disjoint  collection of invariant disks around compatible orbits induced within the same surface. We call such actions as \textit{self compatible}, and the structures that realize these actions are analogous to those that are realized as compatible pairs. Using the result of Gilman~\cite{JG3} mentioned earlier, we can infer that every Type 2 action $h$ has a maximal reductive system $\C$. Consequently, $h$ will induce a finite order map $\tilde{h}$ on the (possibly disconnected) surface obtained by capping $\overline{S_g \setminus \C}$. By analyzing the surface orbits of $\tilde{h}$, we can obtain a decomposition of $h$ into finitely many compatibilities (of the types mentioned above) between irreducible cyclic actions. 

\begin{theorem*}\label{res:2}
For $g \geq 2$, any Type 2 action $h$ on $S_g$ has a decomposition into finitely many compatibilities between irreducible cyclic actions. Consequently, the hyperbolic structures that realize $h$ as isometries can be built from the structures that realize the compatible pairs and self-compatibilities in the decomposition.
\end{theorem*}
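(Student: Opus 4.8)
The plan is to reduce a Type 2 action to irreducible pieces along a maximal reduction system and to recognize the regluing data as the compatibility operations introduced above. Since $h$ is neither rotational nor of Type 1, the quotient orbifold $\mathcal{O}_h$ is not a sphere with three cone points, so by Gilman's criterion~\cite{JG3} the action $h$ is reducible. Invoking Nielsen's realization for cyclic subgroups~\cite{JN1}, I would first fix a finite-order homeomorphism representing $h$, so that reduction systems can be taken to be genuinely $h$-invariant rather than invariant only up to isotopy. Among all such $h$-invariant systems of pairwise disjoint essential simple closed curves I would then choose a maximal one, $\C$.

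Next I would cut $S_g$ along $\C$ and cap each resulting boundary circle with a disk, producing a possibly disconnected closed surface $S'$ on which $h$ descends to a finite-order homeomorphism $\tilde h$. The map $\tilde h$ permutes the components of $S'$ and the capping disks, and the centers of the disks form $\tilde h$-orbits of fixed points, i.e.\ new cone points in the corresponding quotient orbifolds. By the maximality of $\C$, the action induced by the stabilizer of each component-orbit is irreducible; hence, again by Gilman's criterion, each such component carries an irreducible cyclic action, which is therefore explicitly realizable---by the polygonal rotation of the first Theorem in the Type 1 case, and by a rotation-induced metric in the rotational case.

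It then remains to match the curves of $\C$ with the compatibilities. Each $h$-orbit of curves in $\C$ corresponds, after cutting and capping, to a pair of $\tilde h$-orbits of disk centers of some common size $k$; reversing the construction, I would delete invariant disks around these two orbits and reglue the boundary circles to recover $h$ from $\tilde h$. If the two orbits lie in distinct component-orbits this regluing is exactly a compatible pair $(h_1,h_2)$, and if they lie in the same component-orbit it is a self-compatibility. The required compatibility relation $\theta_1+\theta_2\equiv 0 \pmod{2\pi}$ on the local rotation angles at the two families of centers is forced by the fact that the regluing homeomorphism, being orientation-preserving on $S_g$, reverses orientation on the identified boundary circles. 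Since $\C$ consists of finitely many curves, it has finitely many $h$-orbits, giving the asserted finite decomposition.

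For the concluding statement I would assemble the explicit metrics: equip each irreducible piece with its explicit hyperbolic structure, and for each compatibility in the decomposition remove invariant hyperbolic disks of a common area $a$ around the relevant orbits and glue along the resulting boundary geodesics of common length $\ell=\ell(a)$, precisely as in the compatible-pair and self-compatibility constructions described above. Choosing the parameters $a$ (hence $\ell$) consistently across each gluing guarantees that the pieces assemble into a smooth hyperbolic metric on $S_g$ realizing $h$ as an isometry. The main obstacle I anticipate is the bookkeeping that makes the correspondence between $\C$ and the compatibilities precise: verifying that maximality of $\C$ corresponds exactly to the sphere-with-three-cone-points condition on each capped piece, tracking how $\tilde h$ permutes components and disks so as to extract the stabilizer actions with their correct orders and rotation angles, and confirming the angle-compatibility and parameter-matching that let the pieces be reglued into the original action.
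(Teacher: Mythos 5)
Your overall strategy -- cut along a maximal reduction system, cap, observe that $\tilde h$ restricted to each component(-orbit) is irreducible by Gilman, and reinterpret each curve orbit of $\C$ as a compatible pair or a self-compatibility -- is the same skeleton as the paper's proof of Theorem~\ref{thm:arb_real}. But there is a genuine gap at the very first step and, consequently, in the treatment of the irreducible pieces. You assert that since $h$ is neither rotational nor of Type~1, the quotient orbifold is not a sphere with three cone points and hence $h$ is reducible. That is false: by Definition~\ref{def:types_of_actions}, a Type~1 action requires not only $\ell=3$ but also that \emph{some cone point has full order} $n$. An action with $\ell=3$ and all $n_i<n$ is a Type~2 action that is \emph{irreducible} by Gilman's criterion, so it admits no reduction system at all and your cutting procedure never starts. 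The same issue resurfaces after cutting: even when $\C\neq\emptyset$, the irreducible action carried by a component of $S_g(\C)$ may be an irreducible Type~2 action rather than Type~1 or rotational, and your proposed realizations of the pieces (``polygonal rotation \ldots in the Type~1 case, and \ldots rotation-induced metric in the rotational case'') do not cover this case -- Theorem~\ref{main} only realizes Type~1 actions as polygon rotations.

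The missing ingredient is the paper's Lemma~\ref{lem:type3_irr}: for an irreducible Type~2 action $D$, one first stabilizes by a trivial self-compatibility to form $\l D,1\r$, and shows that this stabilized action decomposes into three pairwise compatible irreducible \emph{Type~1} actions (two $(r,s)$-compatibilities plus one self-compatibility); reversing the stabilization then realizes $D$ itself. Without this lemma, neither the base case of the induction (when $h$ itself is irreducible Type~2) nor the components arising after cutting can be realized, so the second sentence of the theorem -- that the hyperbolic structures can actually be \emph{built} from the pieces -- does not follow from your argument. The rest of your outline (matching curve orbits to compatibilities, the angle condition from orientation reversal on the glued circles, the parameter matching $\ell=\ell(a)$) is consistent with Remarks~\ref{rem:l5_geom},~\ref{rem:geom_comp_pair}, and~\ref{rem:triv_comp_pair}, and the bookkeeping issues you flag (spherical orbits arising from $\l D,g'\r$-type compatibilities) are indeed handled in the paper exactly as you anticipate.
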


\noindent Thus, Theorems~\ref{res:1} and~\ref{res:2} together give an explicit solution to the Nielsen realization problem for all non-rotational cyclic actions. 

In Section~\ref{sec:3}, we give a combinatorial perspective to these realizations by establishing a correspondence between certain finite order mapping classes and \textit{fat graph automorphisms}~\cite{BS1}. Let $\Gamma$ be fat graph with one boundary component with a vertex set $V$ and an edge set $E$. An automorphism $F$ of $\Gamma$ of order $n$ is said to be \textit{irreducible} if with either $|E| = n$ and $|V/\langle F \rangle| = 1$ or $|E| = 2n$ and $|V / \langle F \rangle | = 2$. In this direction, we have the following result. 
\begin{theorem*}
There is a bijective correspondence between irreducible Type 1 actions and irreducible fat graph automorphisms. 
\end{theorem*}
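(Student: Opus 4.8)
The plan is to exhibit explicit maps in both directions and prove they are mutually inverse. In the forward direction, starting from an irreducible Type 1 action $h$ of order $n$ on $S_g$, I would feed it into the polygonal realization of \thmref{res:1}. Since $h$ is irreducible, its quotient orbifold is a sphere, so $g_0(h) = 0$ and the realizing polygon $\p_h$ is either a $2n$-gon (when $n_1, n_2 \neq 2$) or an $n$-gon (otherwise), with the explicit side-pairing word $W(\p_h)$. This polygon, together with its side-pairing, determines a cell decomposition of $S_g$ with a single $2$-cell whose centre is the fixed point $P_h$; the image of $\partial \p_h$ in $S_g$ is a graph $\Gamma_h$ whose vertices are the identified corners and whose edges are the identified sides, and the cyclic ordering of edges at each vertex inherited from the orientation of $S_g$ makes $\Gamma_h$ a fat graph. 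Because the single $2$-cell is the unique complementary region, $\Gamma_h$ has exactly one boundary component. The rotation by $\theta_h$ permutes the sides of $\p_h$ by the index-shift under which $W(\p_h)$ is invariant, and hence descends to an automorphism $F_h$ of $\Gamma_h$ of order $n$. Reading the edge count and the number of vertex orbits directly off $W(\p_h)$, I would check that $(|E|, |V/\langle F_h\rangle|)$ satisfies one of the two irreducibility clauses, with the dichotomy $n_1, n_2 \neq 2$ versus otherwise matching $|V/\langle F_h\rangle| = 2$ versus $= 1$.

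For the backward direction, given an irreducible fat graph automorphism $(\Gamma, F)$ of order $n$, I would thicken $\Gamma$ to a surface with one boundary component and cap that boundary with a disk to obtain a closed surface $S_g$; the combinatorial automorphism $F$ then induces a finite-order homeomorphism whose mapping class $h_F$ I must analyse. The key computation is the quotient orbifold $\mathcal{O}_{h_F}$: the unique capping disk is rotated by $F$ and descends to a single cone point of order $n$, which serves as the distinguished point $P$, while the vertex orbits of $F$ (together with any edge reversed by $F^{n/2}$ about its fixed midpoint, in the clause with $|V/\langle F\rangle| = 1$) descend to the remaining cone points. A Riemann--Hurwitz count, using the relation $|E| \in \{n, 2n\}$ imposed by irreducibility, should show that $\mathcal{O}_{h_F}$ is a sphere carrying exactly three cone points, so that $h_F$ is irreducible; the order-$n$ cone point coming from the capping disk then certifies that $h_F$ is of Type 1.

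The two constructions are inverse essentially by design: applying the forward map to $h_F$ produces the polygon obtained by cutting the capped thickening along $\Gamma$, which reproduces $(\Gamma, F)$, while applying the backward map to $(\Gamma_h, F_h)$ re-glues $\p_h$ and returns the rotation $\theta_h$, which is $h$ by \thmref{res:1}. I expect the main obstacle to be purely combinatorial: verifying that the side-pairing $W(\p_h)$ always yields a single boundary cycle (equivalently, one complementary disk), and that the edge count together with the vertex-orbit count land in exactly one of the two clauses $|E| = n,\, |V/\langle F\rangle| = 1$ and $|E| = 2n,\, |V/\langle F\rangle| = 2$. The most delicate point is the treatment of an order-$2$ cone point: when some $n_i = 2$, the corresponding branch orbit is realized not as a vertex orbit but as the midpoint of an edge reversed by $F^{n/2}$, and tracking how this reassigns one unit of vertex-orbit data to the edge data is precisely what separates the two irreducibility clauses.
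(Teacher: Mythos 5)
Your proposal is correct and follows essentially the same route as the paper: the paper's Theorem~\ref{thm:fg_auto} establishes exactly this polygon-with-side-pairing $\leftrightarrow$ fat-graph correspondence (rotation of $\p$ inducing an automorphism of $\Gamma(\p)$ commuting with $\sigma_0,\sigma_1$, and conversely thickening, reading off the single boundary word, and capping), and its corollary then matches the two irreducibility clauses $(|E|,|V/\langle F\rangle|)=(2n,2)$ or $(n,1)$ to the dichotomy $n_1,n_2\neq 2$ versus some $n_i=2$, just as you describe. Your observation that an order-$2$ cone point is carried by edge data rather than a vertex orbit is precisely the point underlying that dichotomy, so no gap to report.
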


\noindent Furthermore, we show that a compatibility between irreducible Type 1 actions determines a notion of compatibility between the corresponding irreducible fat graph automorphisms, and this leads to the following result. 

\begin{theorem*}
 Every cyclic action that decomposes into compatibilities between finitely many irreducible Type 1 actions can be viewed as a fat graph automorphism. 
 \end{theorem*}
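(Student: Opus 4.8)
The plan is to proceed by induction on the number of compatibilities appearing in the decomposition of the cyclic action $h$, using the preceding bijective correspondence as the base case. By hypothesis, $h$ decomposes into finitely many compatibilities (compatible pairs and self-compatibilities) between irreducible Type 1 actions $h_1, \ldots, h_t$. For each $h_i$, the correspondence furnishes an irreducible fat graph $\Gamma_i$ with one boundary component together with an irreducible automorphism $F_i$ realizing $h_i$. The base case ($t = 1$, a single irreducible action entering into no compatibility) is precisely that correspondence, so it suffices to show that each compatibility can be matched by a combinatorial operation on fat graphs that produces a new single-boundary fat graph automorphism realizing the compatible action.

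First I would make precise the operation on fat graphs that mirrors the surface-level gluing. A compatibility between Type 1 actions $h_1, h_2$ with compatible orbits $O_1, O_2$ of common size $k$ is effected by excising invariant disks around the points of $O_i$ and identifying the resulting boundary circles. Under the thickening that recovers $S_{g_i}$ from $\Gamma_i$, each point of the distinguished orbit $O_i$ corresponds to a sub-arc of the single boundary cycle of $\Gamma_i$, and the $\langle F_i \rangle$-action permutes these $k$ arcs cyclically. I would encode the identification of $\partial D_{1,j}$ with $\partial D_{2,j}$ by adjoining $k$ new edges $e_1, \ldots, e_k$ joining the corresponding boundary arcs of $\Gamma_1$ to those of $\Gamma_2$, inserting each $e_j$ into the cyclic orderings at its endpoints so that the two boundary cycles merge into a single boundary cycle of the enlarged graph $\Gamma := \Gamma_1 \cup_{\{e_j\}} \Gamma_2$. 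The self-compatible case is handled identically, the $k$ new edges now joining boundary arcs of a single $\Gamma_1$ to each other.

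Next I would verify that $\Gamma$ is again a fat graph with \emph{one} boundary component and that $F_1, F_2$ assemble into an automorphism $F$ of $\Gamma$. The compatibility condition $\theta_1 + \theta_2 \equiv 0 \pmod{2\pi}$ guarantees that the local rotation numbers match under the identification, so that defining $F$ to act as $F_i$ on $\Gamma_i$ and to permute the new edges cyclically by $e_j \mapsto e_{j+1}$ (indices mod $k$) respects all the cyclic orderings; hence $F$ is a genuine fat graph automorphism whose order equals that of the compatible action. A short Euler-characteristic bookkeeping check, using $|V(\Gamma)| = |V(\Gamma_1)| + |V(\Gamma_2)|$ and $|E(\Gamma)| = |E(\Gamma_1)| + |E(\Gamma_2)| + k$ together with the merging of the two boundary cycles into one, should then reproduce the genus relation $g' = g_1 + g_2 + k - 1$ of the compatibility, confirming that the thickened surface of $\Gamma$ is $S_{g'}$ and that $F$ realizes the glued action.

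The hard part will be confirming that the boundary count behaves as claimed: I must check that inserting the $k$ equivariant edges fuses the two boundary cycles of $\Gamma_1 \sqcup \Gamma_2$ into exactly one, rather than splitting off extra boundary circles, and that this fusion is consistent with the cyclic order forced by the rotation angles. This is where the half-edge conventions in the definition of an irreducible fat graph automorphism—the dichotomy $|E| = n$ versus $|E| = 2n$—must be tracked carefully, since the insertion positions of the $e_j$ within each local cyclic order determine the resulting boundary permutation. Once the single-boundary-component property is secured for one compatibility, the inductive step is immediate: an action obtained by one further compatibility corresponds to adjoining another equivariant block of edges to the fat graph already built, and the same verification applies verbatim. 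Iterating over the finitely many compatibilities in the decomposition of $h$ then yields a fat graph $\Gamma_h$ with one boundary component carrying an automorphism $F_h$ that realizes $h$, which is the desired conclusion.
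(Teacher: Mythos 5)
Your overall strategy --- induct on the number of compatibilities, taking the bijective correspondence between irreducible Type 1 actions and irreducible fat graph automorphisms as the base case --- is exactly the paper's, which proves the statement by inductively applying the compatible-pair correspondence of Theorem~\ref{thm:comp_fat_graphs} together with the decomposition of Theorem~\ref{thm:arb_real}. The gap is in the combinatorial gluing operation itself. Adjoining only $k$ new edges to $\Gamma_1\sqcup\Gamma_2$ (with no new vertices, as you specify) gives $\chi(\Gamma)=(1-2g_1)+(1-2g_2)-k$, whereas the thickening of a fat graph realizing the glued action on $S_{g'}$ with $g'=g_1+g_2+k-1$ and $b$ boundary components must have $\chi=2-2g'-b=4-2g_1-2g_2-2k-b$; equating the two forces $b=2-k$, which is impossible for $k\geq 2$ (and for $k$ even a single boundary component is ruled out by parity alone, since it would make $2g'$ odd). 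So the Euler-characteristic check that you defer to the end would in fact show your construction cannot produce the right surface. A related misconception: the points of a compatible orbit of size $k<n$ correspond to a \emph{vertex orbit} of $\Gamma_i$, not to sub-arcs of its boundary cycle --- the single boundary cycle corresponds to the order-$n$ fixed point --- and the glued fat graph should end up with \emph{two} boundary components, one for each surviving order-$n$ fixed point of $D_1$ and $D_2$, not one. This is visible in Example~\ref{eg:fg_cpair}, where the glued graph has $6$ four-valent vertices, $12$ undirected edges, genus $3$, and two boundary cycles.

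The correct operation, carried out in Remark~\ref{rem:comp_fat_graphs}, is finer than the one you propose: one deletes each vertex $v$ of the compatible orbit (of degree $m$, the cone-point order) and inserts one new arc between every pair of consecutive half-edges in the cyclic order at $v$, so that the boundary of the excised disk is the concatenation of these $m$ arcs. This produces $n=km$ new arcs on each side, which are identified in pairs across $\Gamma_1$ and $\Gamma_2$ in the pattern dictated by the rotation angles, after which the vertex classes are re-traced. The net effect is to replace the $2k$ deleted vertices by $n$ new $4$-valent ones, giving $\Delta\chi=(n-2k)-n=-2k$ as required. Your automorphism $F$ can then be defined essentially as you intend (acting by $F_i$ on the old edges and permuting the new arcs equivariantly), but only after this corrected gluing; with $k$ single edges there is no consistent way to insert them into the cyclic orders so as to realize the identification of whole boundary circles, and the inductive step collapses at its first application.
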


In Section~\ref{sec:3}, we give several applications of our geometric realizations. As a consequence of Theorem~\ref{res:2}, we determine the size of maximal reduction systems for certain finite order maps, as detailed in the following result. 
\begin{theorem*}
Let $h$ be a cyclic action on $S_g$.
\begin{enumerate}[(i)]
\item If $h$ is of Type 1, then there exists a maximal reduction system $\C$ for $h$ such that
$$|\C|  = 
\begin{cases}
n(3g_0(h)-1), & \text{if } g_0(h)>1, \text{ and} \\
2n, & \text{if } g_0(h)=1.
\end{cases}$$
\item If $h$ is a Type 2 action realizable as a compatible pair $(h_1,h_2)$ of Type 1 actions $h_i$ on $S_{g_i}$, then there exists a maximal reduction system $\C$ for $h$ such that
$$\small |\C| = \begin{cases}
n(3g_0(h_1)+3g_0(h_2)-2)+k, & \text { if }g_0(h_1),g_0(h_2)>1 ,  \\
n(3g_0(h_1)-1)+k+2n, &\text{ if }g_0(h_1)>1,g_0(h_2)=1,\\
n(3g_0(h_2)-1)+k+2n, &\text{ if }g_0(h_2)>1,g_0(h_1)=1,\\
n(3g_0(h_2)-1)+k, &\text{ if }g_0(h_2)>1,g_0(h_1)=0,\\
n(3g_0(h_1)-1)+k, &\text{ if }g_0(h_1)>1,g_0(h_2)=0,\text{ and}\\
k, & \text{otherwise,}
\end{cases}$$
where $k = g -g_1-g_2+1$.
\end{enumerate}

\end{theorem*}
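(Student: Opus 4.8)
The overall plan is to pass to the quotient orbifold $\mathcal{O}_h$, where the reduction curves are much easier to see, and to use the explicit models of Theorems~\ref{res:1} and~\ref{res:2}. The basic correspondence I will use is that an $h$-invariant family of disjoint essential simple closed curves on $S_g$ projects to a family of disjoint essential curves on $\mathcal{O}_h$ avoiding the cone points, and conversely such a family on $\mathcal{O}_h$ pulls back to an $h$-invariant family on $S_g$; a curve on $\mathcal{O}_h$ is essential (bounding neither a disk nor a disk with one cone point) exactly when its lifts are essential on $S_g$. If $\gamma$ is such a curve and its class has order $d_\gamma$ under $\pi_1^{\mathrm{orb}}(\mathcal{O}_h)\to\langle h\rangle\cong\mathbb{Z}/n$, then $\gamma$ pulls back to an $h$-orbit of $n/d_\gamma$ curves. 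Thus I will build $\C$ from suitable families on $\mathcal{O}_h$ and compute $|\C|$ by adding up these orbit sizes.

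For part~(i) I would realize $h$ as in Theorem~\ref{res:1}, so that it is the order-$n$ rotation of $\p_h$ that cyclically permutes its $n$ blocks $Q_ia_{2i-1}a_{2i}$, with $\mathcal{O}_h$ a genus $g_0(h)$ surface carrying the three cone points $P,P_1,P_2$. The construction is blockwise: a single curve $\beta$ encircling all three cone points has trivial class in $\langle h\rangle$, hence pulls back to a full $h$-orbit of $n$ curves, and it separates off a central region that is a sphere with three cone points, already irreducible by Gilman's criterion and requiring no further cuts. The complementary genus $g_0(h)$ subsurface with one boundary is decomposed into pairs of pants using $3g_0(h)-2$ handle-type curves, again of trivial class, each pulling back to an $n$-orbit. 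This yields $(3g_0(h)-2)+1=3g_0(h)-1$ curves on $\mathcal{O}_h$ and hence $n(3g_0(h)-1)$ curves in $\C$. The case $g_0(h)=1$ is recorded separately because the genus block degenerates, but inspecting the torus-with-three-cone-points quotient directly returns the same value $2n=n(3\cdot 1-1)$. The work here is to verify that these curves are disjoint, essential, pairwise non-isotopic and $h$-invariant, and that the resulting system is maximal, i.e.\ no further essential $h$-invariant curve of this type can be inserted into any complementary piece.

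For part~(ii) I would use Theorem~\ref{res:2} to write $h$ as a compatible pair $(h_1,h_2)$, so that $S_g$ is obtained from $S_{g_1}$ and $S_{g_2}$ by deleting $k=g-g_1-g_2+1$ invariant disks about the compatible orbits and gluing the resulting $k$ pairs of boundary circles. The natural candidate for $\C$ is the union of the $k$ gluing curves, which form a single $h$-orbit of essential curves, with the internal systems supplied by part~(i) applied to each piece, the gluing boundary playing the role of the third cone point; this contributes $n(3g_0(h_i)-1)$ when $g_0(h_i)\ge 1$ and nothing when $g_0(h_i)=0$, and summing with the $k$ gluing curves reproduces the first five cases. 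The main obstacle, and the reason for the separate ``otherwise'' case, is controlling isotopies across the gluing in the degenerate regime where both pieces satisfy $g_0(h_i)\le 1$: here the assembled surface is simple enough that the internal curves of any $g_0(h_i)=1$ piece are absorbed by the gluing, becoming isotopic to gluing curves or being destabilized by the deleted disks, so that only the $k$ gluing curves survive and $|\C|=k$. Pinning down exactly when this absorption happens, and re-verifying maximality after removing the redundant curves, is the crux of part~(ii).
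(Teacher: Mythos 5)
Your construction is, up to a change of viewpoint, the paper's own. The paper realizes a Type 1 action $\tilde D$ with $g_0(\tilde D)=g'>0$ as $\l D, g'\r$, i.e.\ as an irreducible central piece with $n$ copies of $S_{g',1}$ attached along an orbit of $n$ curves, and takes for $\C$ that orbit together with a pants decomposition of each attached copy ($3g'-2$ curves each when $g'>1$, one curve when $g'=1$). Your curve $\beta$ on $\mathcal{O}_h$ encircling the three cone points lifts precisely to that orbit of $n$ boundary curves, and your pants curves on the genus-$g_0(h)$ part of $\mathcal{O}_h$ (which have trivial image in $\mathbb{Z}/n$ in the standard model, as you say) lift to the pants decompositions of the $n$ attached handles; the counts agree verbatim. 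For part (ii) both you and the paper take the union of the $k$ gluing curves with the part (i) systems on the two sides. So for part (i) and the first five cases of part (ii) you have reproduced the paper's argument in the quotient rather than upstairs.

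The genuine problem is your treatment of the ``otherwise'' case of (ii). The ``absorption'' you invoke does not occur: if, say, $g_0(h_1)=1$ and $g_0(h_2)=0$, the $2n$ internal reduction curves of the $h_1$-piece each either cut off a genus-one subsurface or separate off the irreducible central piece, whereas each gluing curve separates the $S_{g_1}$-side from the $S_{g_2}$-side; none of these become isotopic to one another or inessential after the gluing (the deleted disks lie either in the central piece or in the interior of a handle, and removing a disk neither kills a pants curve nor creates an isotopy to the new boundary). Hence the $k$ gluing curves alone do not form a \emph{maximal} system in the $(1,0)$, $(0,1)$ and $(1,1)$ subcases, and the isotopies you posit would have to be exhibited explicitly --- they do not exist. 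You should also be aware that the paper's own proof of (ii) is the one-line ``direct application of (i) and Remark~\ref{rem:geom_comp_pair}'', which in those subcases yields $2n+k$ (resp.\ $4n+k$), not $k$; the stated ``otherwise'' value is consistent with that proof only when $g_0(h_1)=g_0(h_2)=0$. So your instinct that something must be said to reconcile the formula with the construction is correct, but the reconciliation cannot go the way you propose; the discrepancy lies in the statement, not in a hidden collapse of curves.
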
 

\noindent We now turn our attention to some algebraic consequences of our realizations. Let $\Psi: \text{Mod}(S_g) \to \text{Sp}(2g; \mathbb{Z})$ be the symplectic representation of $\Mod(S_g)$. As a consequence of these realizations, in Section~\ref{sec:2}, we derive an explicit procedure for determining the $\Psi(h)$. It is not hard to see that for a rotational action $h$ that is either free or of order $n \geq 3$, $\Psi(h)$ is a simple permutation matrix. However, when $h$ is a non-free action of order 2, it has $2k$ fixed points, and $\Psi(h)$ has the form 
 $\begin{tiny} 
 \begin{bmatrix}  
 -I_{2k-2}  & 0 \\
 0  & P
 \end{bmatrix}
 \end{tiny},$ where $I_{2k-2}$ is the identity matrix of order $2k-2$, and $P$ is some permutation matrix of order $2(g-k)+2$. When $h$ is a Type 1 action, we give an explicit handle normalization algorithm to obtain a symplectic basis in terms of the letters in $W(\p_h)$ (see Proposition~\ref{prop:normal}). This enables us to compute $\Psi(h)$ by reading the action from $\p_h$ (see Theorem~\ref{thm:rep type 2}). We then use Theorem~\ref{res:2} to obtain the symplectic representations of generic cyclic actions. As the restriction $\Psi$ to finite order mapping classes is faithful~\cite[Chapter 6]{FM}, one can perceive this as a significant step towards finding an explicit word representation for finite order elements of $\Mod(S_g)$ in terms of its standard Dehn twist generators. 

Let $C$ be a nonseperating simple closed curve in $S_g$, and let $t_C$ denote the left-handed Dehn twist about $C$. It is known~\cite{MS,km} that a root $\phi$ of $t_C$ of degree $n$ corresponds to a finite order action $h_{\phi}$ on $S_{g-1}$ that has a distinguished pair $P_i$ (for $i= 1,2$) of fixed points around which the induced angles $\theta_{P_i}$ satisfy $\theta_{P_1}+\theta_{P_2} \equiv 2\pi/n \pmod{2\pi}.$ The root $\phi$ is then realized by removing invariant disks around the $P_i$ and attaching an annulus $A_\phi$ with a $1/n^{th}$ twist across the resultant boundary components. So our procedure for computing the symplectic representations of finite order actions naturally extends to roots of $t_C$. In particular, we show that (see Theorem~\ref{thm:rep_roots}):
\begin{theorem*}
For $g \geq 2$, let $C$ be a nonseperating simple closed curve in $S_g$, and let $\phi \in \Mod(S_g)$ be a root of $t_C$ of degree $n$. Then 
$$\Psi(\phi) = \begin{bmatrix} 
                        \Psi(h_{\phi}) & B_1\\
                        B_2 & I_2
                       \end{bmatrix},$$
where $\Psi(h_\phi)$ is obtained using Theorem~\ref{res:2}, and the block $B_1$ (and hence $B_2$) is completely determined by the action of $h_{\phi}$ on the pair of standard homology generators determined by the attachment of $A_{\phi}$. 
\end{theorem*}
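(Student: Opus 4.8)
The plan is to read $\Psi(\phi)$ directly off the geometric description of the root, using the homology splitting induced by the attachment of $A_\phi$. Recall that $S_g$ is obtained from $S_{g-1}$ by excising the two invariant disks around the fixed points $P_1,P_2$ of $h_\phi$ and gluing in the twisted annulus $A_\phi$ across the resulting boundary circles. This yields a splitting $H_1(S_g;\mathbb{Z}) = H_1(S_{g-1};\mathbb{Z}) \oplus \Lambda$, where $\Lambda$ is the rank-two summand spanned by $c = [C]$, the class of the core of $A_\phi$, and a dual ``through-handle'' curve $d$ with $\langle c,d\rangle = 1$ under the algebraic intersection pairing $\langle\cdot,\cdot\rangle$, while the $H_1(S_{g-1})$-summand is generated by curves lying in the complement of $A_\phi$. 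Ordering a symplectic basis as (a symplectic basis of $H_1(S_{g-1})$, followed by the pair $c,d$) is exactly the ordering for which $\Psi(\phi)$ acquires the asserted block shape, and the task reduces to computing the action of $\phi_*$ on each basis class.

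First I would pin down the two diagonal blocks. Every class of $H_1(S_{g-1})$ is represented by a curve disjoint from $A_\phi$, and on this region $\phi$ agrees with $h_\phi$; hence the image $\phi_*(x)$ of such a class again lies in $H_1(S_{g-1})$ and equals $h_{\phi*}(x)$ after projecting to that summand, while having zero $d$-component because $\phi(x) = h_\phi(x)$ remains disjoint from $C$. This simultaneously shows the top-left block is $\Psi(h_\phi)$ (computed via Theorem~\ref{res:2}) and that the $d$-row of $B_2$ vanishes. For the lower-right block, since $\phi$ is a root of $t_C$ it fixes $C$ together with its orientation, so $\phi_*(c) = c$; and, choosing $d$ by the handle normalization of Proposition~\ref{prop:normal}, I would arrange that $\phi_*(d) = d + \beta$ with $\beta \in H_1(S_{g-1})$ and no $c$-component. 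These two facts give the lower-right block $I_2$ and, since $\phi_*(c)=c$ has no $H_1(S_{g-1})$-part, force the first column of $B_1$ to vanish, so that $B_1 = [\,0 \mid \beta\,]$.

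It then remains to identify $\beta$ and $B_2$. The curve $d$ is built from an arc crossing $A_\phi$ together with a connecting arc in $S_{g-1}$ joining $\partial D_1$ to $\partial D_2$; applying $\phi$ replaces the connecting arc by its $h_\phi$-image, and $\beta$ is precisely the closed class swept between these two arcs, so that $\beta$ is determined by the action of $h_\phi$ on the standard generators associated to the attaching circles. Finally, $B_2$ is extracted from the symplectic constraint: writing $\Psi(\phi) = M$ and letting $J$ be the standard form, the identity $M^{T}JM = J$ yields linear relations among the blocks that determine $B_2$ from $B_1$ and $\Psi(h_\phi)$ (and this may be cross-checked against $M^n = \Psi(t_C)$, the transvection $x \mapsto x + \langle x,c\rangle c$, which must reproduce the full Dehn twist after $n$ iterations). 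I expect the main obstacle to be the bookkeeping in the previous paragraph: tracking the $1/n$-twist in $A_\phi$ carefully enough to confirm that $\phi_*(d)$ carries no spurious $c$-component (so that the handle block is cleanly $I_2$) and to express $\beta$ explicitly in terms of $h_\phi$, after which the symplectic elimination producing $B_2$ is routine.
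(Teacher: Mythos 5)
Your proposal is correct and follows essentially the same route as the paper's proof of Theorem~\ref{thm:rep_roots}: split off the rank-two handle summand, observe that the core class of $A_\phi$ is fixed, compute the image of the through-handle class as itself plus the class of $\gamma\alpha^{-1}$ (the loop formed by a connecting arc and its image under the finite-order map), and recover the remaining block from the symplectic constraint. The only differences are cosmetic --- you order the handle pair as $(c,d)$ where the paper uses $(l_{g+1},m_{g+1})$, which merely swaps the columns of $B_1$, and the $1/n$-twist bookkeeping you flag as the main obstacle is handled in the paper at the same level of detail you propose (the image arc on the annulus is taken to be homotopic to the original rel the moved endpoints, so no spurious core component appears).
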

\noindent Suppose that $C$ is a separating curve in $S_g$ so that $S_g = S_{g_1} \#_C S_{g_2}$. Then a root $\phi$ of $t_C$ of degree $n$ corresponds to a pair $(h_\phi^1, h_\phi^2)$ of finite order maps, where for $i =1,2$, $h_{\phi}^i$ is an order $n_i$ map on $S_{g_i}$ with a distinguished fixed point $P_i$ satisfying $\theta_{P_1} + \theta_{P_2} \equiv 2\pi/n \pmod{n}$ with $n = \text{lcm}(n_1,n_2)$. An immediate consequence of this fact is that $\tiny \Psi(\phi) = \begin{bmatrix}
                        \Psi(h_\phi^1) & 0 \\
                        0 & \Psi(h_\phi^2)
                        \end{bmatrix},$
where the $\Psi(h_\phi^i)\text{'s}$ are obtained using Theorem~\ref{res:2}. Finally, we indicate how these results could be extended to compute the symplectic representations of roots of Dehn twists about multicurves~\cite{KP}. 

\section{Geometric realizations of cyclic actions}\label{sec:1}
\subsection{Preliminaries}\label{prel} 
Let $h:C_n \to \homeo(S_g)$ be a faithful $C_n$-action on $S_g$, where $C_n$ denotes the cyclic group of order $n$. We fix a generator $t$ for $C_ n$ and identify the finite order homeomorphism $h(t)\in \homeo(S_g)$ as the generating homeomorphism of the action. For convenience, we also use $h$ to denote the generating homeomorphism $h(t)$ of the action. 

In general, a $C_n$-action $h$ on $S_g$ induces a branched covering $$S_g \to \mathcal{O}_h := S_g/C_n,$$ which  have  $\ell$ branched points (or cone points) $x_1,\ldots ,x_{\ell}$ in the quotient orbifold $\mathcal{O}$ of orders $n_1, \ldots ,n_{\ell}$, respectively. For each $i$, the cone point $x_i$ lifts to an orbit of size $n/n_i$ on $S_g$, and the local rotation induced by $h$ around the points in the orbit is given by $2 \pi c_i^{-1}/n_i$, where $c_i c_i^{-1} \equiv 1 \pmod{n_i}$. This motivates the following definition. 

\begin{definition}\label{defn:data_set}
A \textit{data set of degree $n$} is a tuple
$$
D = (n,g_0, r; (c_1,n_1), (c_2,n_2),\ldots, (c_{\ell},n_{\ell})),
$$
where $n\geq 1$, $ g_0 \geq 0$, and $0 \leq r \leq n-1$ are integers, and each $c_i$ is a residue class modulo $n_i$ such that:
\begin{enumerate}[(i)]
\item $r > 0$ if, and only if $\ell = 0$, and when $r >0$, we have $\gcd(r,n) = 1$, 
\item each $n_i\mid n$,
\item for each $i$, $\gcd(c_i,n_i) = 1$, and
\item $\displaystyle \sum_{j=1}^{\ell} \frac{n}{n_j}c_j \equiv 0\pmod{n}$.
\end{enumerate}
The number $g$ determined by the equation
\begin{equation*}\label{eqn:riemann_hurwitz}
\frac{2-2g}{n} = 2-2g_0 + \sum_{j=1}^{\ell} \left(\frac{1}{n_j} - 1 \right) \tag{R-H}
\end{equation*}
is called the \emph{genus} of the data set, which we shall denote by $g(D)$. We denote the degree $n$ of the data set $D$ by $n(D)$, the number $r$ by $r(D)$, and the number $g_0$ by $g_0(D)$. 
\end{definition}

\noindent The following proposition (see \cite[Theorem 3.8]{KP} for a proof) establishes a correspondence between the conjugacy classes of cyclic actions and data sets.

\begin{proposition}\label{prop:ds-action}
Data sets of degree $n$ and genus $g$ correspond to conjugacy classes of $C_n$-actions on $S_g$. 
\end{proposition}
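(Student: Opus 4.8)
The plan is to route everything through the orbifold fundamental group of the quotient and the classical dictionary between regular branched covers and surjections onto the deck group. Writing $\mathcal{O}$ for the quotient orbifold $\mathcal{O}_h$, which is a closed surface of genus $g_0$ carrying cone points of orders $n_1,\dots,n_\ell$, I would use the standard presentation
$$\pi_1^{\mathrm{orb}}(\mathcal{O}) = \left\langle a_1,b_1,\dots,a_{g_0},b_{g_0},\xi_1,\dots,\xi_\ell \;\middle|\; \xi_i^{n_i}=1,\ \prod_{j=1}^{g_0}[a_j,b_j]\prod_{i=1}^{\ell}\xi_i = 1 \right\rangle,$$
where each $\xi_i$ is represented by a small loop encircling the $i$-th cone point. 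A faithful $C_n$-action on $S_g$ with quotient $\mathcal{O}$ is, up to conjugacy in $\homeo(S_g)$, the same datum as a surjection $\phi\colon \pi_1^{\mathrm{orb}}(\mathcal{O}) \to C_n$ whose restriction to each cyclic subgroup $\langle \xi_i \rangle$ is injective; this is the classification of regular branched covers by subgroups of the orbifold group, the injectivity ensuring that $\ker\phi$ is torsion-free and the cover is a manifold. Fixing the generator $t$ of $C_n$, the numbers $c_i$ are read off from $\phi(\xi_i)$ via the local rotation angle, so a surjection produces a tuple $(c_1,n_1),\dots,(c_\ell,n_\ell)$, and conversely.

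For the forward direction I would show that the tuple extracted from an action is a data set. Conditions (ii) and (iii) are immediate from the fact that $\phi(\xi_i)$ generates a subgroup of order exactly $n_i$; condition (iv) is precisely the image under $\phi$ of the long relation $\prod[a_j,b_j]\prod \xi_i = 1$, since the commutators die in the abelian group $C_n$ and $\phi(\xi_i)$ corresponds to the residue $(n/n_i)c_i$; condition (i) merely separates the unbranched case $\ell = 0$, where the rotation number $r$ records the free action, from the branched case; and the genus relation (R-H) is the orbifold Euler characteristic computation $\chi(S_g) = n\,\chi^{\mathrm{orb}}(\mathcal{O})$. Conjugate actions induce the same orbifold and the same local rotation data, so this assignment is well defined on conjugacy classes.

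For the reverse direction, given a data set $D$ satisfying (i)--(iv), I would build the orbifold $\mathcal{O}$ of genus $g_0$ with $\ell$ cone points of the prescribed orders and define $\phi$ on generators by sending $\xi_i$ to the element of $C_n$ dictated by $(c_i,n_i)$ and choosing the images of $a_j,b_j$ freely. Condition (iv) guarantees that $\phi$ respects the long relation, condition (iii) that $\phi|_{\langle\xi_i\rangle}$ is injective so that $\ker\phi$ is torsion-free and the associated cover is a closed surface branched of the correct orders, and surjectivity (equivalently, connectedness of the cover) is arranged by the free choice of images of $a_j,b_j$ when $g_0>0$ and must be read off from the branch data when $g_0=0$. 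The regular branched cover determined by $\ker\phi$ then carries the desired $C_n$-action, and the Riemann--Hurwitz computation (R-H) identifies its genus with $g(D)=g$.

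The genuine difficulty, and the step I would spend the most care on, is bijectivity at the level of conjugacy classes: distinct surjections $\phi$ can yield the same data set, and I must check that any two of them give actions conjugate in $\homeo(S_g)$, while also confirming that the residue-class normalizations built into \defref{defn:data_set} account for all the remaining choices. This is a change-of-coordinates argument: a homeomorphism of $S_g$ conjugating one action to another descends to a homeomorphism of $\mathcal{O}$, and conversely any homeomorphism of $\mathcal{O}$ permuting the cone points compatibly with the rotation data lifts. I would therefore invoke the action of the mapping class group of $\mathcal{O}$ on the set of admissible surjections with fixed branch orders and show that two surjections determine the same data set exactly when they lie in a common orbit, which for the abelian deck group $C_n$ reduces to a Nielsen-type normalization of the generating vector. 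Reconciling this orbit description with the rigid tuple in \defref{defn:data_set} is where the bookkeeping is heaviest, and here I would lean on the detailed argument in the cited source to finish.
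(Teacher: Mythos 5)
Your outline is the standard orbifold covering-space argument---encoding the action as a surjection $\pi_1^{\mathrm{orb}}(\mathcal{O})\to C_n$ with torsion-free kernel, reading the $(c_i,n_i)$ off the images of the peripheral generators, and reducing uniqueness to a mapping-class-group/Nielsen normalization of the generating vector---which is exactly the content of the result the paper invokes; the paper itself offers no proof of \propref{prop:ds-action} beyond the citation to \cite[Theorem 3.8]{KP}. Since you defer the same two delicate points (conjugacy-class bijectivity, and the $g_0=0$ surjectivity/lcm check, which Definition~\ref{defn:data_set} as printed does not actually enforce) to that same source, your proposal is correct as far as it goes and matches the paper's treatment.
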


\noindent From here on, we will use data sets for describing the conjugacy classes of cyclic actions. Note that the quantity $r(D)$ associated with a data set $D$ will be non-zero if, and only if, $D$ represents a free rotation of $S_{g(D)}$ by $2\pi r(D)/n$. For brevity, we will avoid writing $r$ in the notation of a data set, whenever $r=0$. Equation~\ref{eqn:riemann_hurwitz} in Definition~\ref{defn:data_set} is the Riemann-Hurwitz equation associated with the branched covering $S_g \to \mathcal{O}_D (= S_g /C_n)$. Before diving into the geometric realizations of  cyclic actions, we classify $C_n$-actions on $S_g$ into three broad categories. 

\begin{definition}\label{def:types_of_actions}
Let $D$ be a $C_n$-action on $S_g$. Then $D$ is said to be a: 
\begin{enumerate}[(i)]
\item \textit{rotational action}, if either $r(D) \neq 0$, or $D$ is of the form 
$$(n,g_0;\underbrace{(s,n),(n-s,n),\ldots,(s,n),(n-s,n)}_{k \,pairs}),$$ 
for integers $k \geq 1$ and $0<s\leq n-1$ with $\gcd(s,n)= 1$, and $k=1$, if and only if $n>2$.  
\item \textit{Type 1 action}, if $\ell = 3$, and $n_i = n$ for some $i$. 
\item \textit{Type 2 action}, if $D$ is neither a rotational nor a Type 1 action. 
\end{enumerate}
\end{definition}  

\begin{remark}
In order to make sense of Definition~\ref{def:types_of_actions} (i), we first consider the case when a rotational action $D$ is a free rotation. In this case, the quotient map $S_g \to \mathcal{O}_D ( = S_g/C_n)$ is a covering space, and by the multiplicativity of the Euler characteristic, we have that the genus $g_0(D)$ of $\mathcal{O}_D$ satisfies $g_0(D) = 1+(g(D)-1)/n$. On the other hand, if $D$ is a non-free rotation, then $D$ will have $2k$ fixed points which are induced at the points of intersection of the axis of rotation with $S_g$. Moreover, these fixed points will form $k$ pairs of points $(x_i,x_i')$, for $1 \leq i \leq k$, such that the sum of the angles of rotation induced by $h$ around $x_i$ and  $x_i'$ add up to $0$ modulo $2\pi$. 
\end{remark}

As mentioned in Section~\ref{intro}, every rotational action $D$ on $S_g$, for $g \geq 2$, can be realized as a restriction $\mathcal{R}_D$ of a rotation of $\mathbb{R}^3$ under a fixed imbedding $\sigma:S_g \hookrightarrow \mathbb{R}^3$, the standard euclidean metric on $\mathbb{R}^3$ will induce a Riemannian metric $\rho_{\sigma}$ on $S_g$. By the uniformization theorem, the metric $\rho_{\sigma}$ is conformally equivalent to a unique hyperbolic metric $\mu_{\sigma}$ on $S_g$ that also realizes $\mathcal{R}_D$ as an isometry. For this reason, we will focus on non-rotational actions, beginning with the realizations of Type 1 actions. 

\subsection{Type 1 actions}
 In this subsection, we will show that a Type 1 action $D$ can be realized as the rotation of an even sided polygon with an appropriate side-pairing. We will assume throughout this section that $D$ has the form
 \[D=(n, g_0; (c_1 , n_1 ),(c_2, n_2), (c_ 3, n_ 3 )), \text{ where } n_3=n. \]
We motivate the realization of Type 1 actions with an example. 
 
\begin{example}
\label{eg:polygons}
Consider $C_5$-action
$$D=(5,0;(1,5),(3,5),(1,5))$$ on $S_2$.  This action can be realized in two possible ways, as shown in Figure~\ref{fig:2} below.
\begin{figure}[H]
\labellist
\small
\pinlabel $\huge \curvearrowleft$ at 195 185
\pinlabel $C$ at 195 150
\pinlabel $2\pi/5$ at 195 215
\pinlabel $A$ at 140 -10
\pinlabel $x_1$ at 204 -12
\pinlabel $B$ at 261 -10
\pinlabel $x_2$ at 317 23
\pinlabel $A$ at 360 67
\pinlabel $x_3$ at 393 117
\pinlabel $B$ at 400 177
\pinlabel $x_4$ at 390 232
\pinlabel $A$ at 360 287
\pinlabel $x_2$ at 317 318
\pinlabel $B$ at 260 352
\pinlabel $x_5$ at 200 356
\pinlabel $A$ at 140 352
\pinlabel $x_4$ at 85 325
\pinlabel $B$ at 40 287
\pinlabel $x_1$ at 10 232
\pinlabel $A$ at -5 177
\pinlabel $x_5$ at 5 117
\pinlabel $B$ at 30 67
\pinlabel $x_3$ at 75 23

\pinlabel $\huge \curvearrowleft$ at 655 185
\pinlabel $B$ at 655 150
\pinlabel $4\pi/5$ at 655 215
\pinlabel $C$ at 600 -10
\pinlabel $y_1$ at 664 -12
\pinlabel $A$ at 721 -10
\pinlabel $y_2$ at 777 23
\pinlabel $C$ at 821 67
\pinlabel $y_3$ at 853 117
\pinlabel $A$ at 860 179
\pinlabel $y_4$ at 850 232
\pinlabel $C$ at 826 287
\pinlabel $y_5$ at 777 318
\pinlabel $A$ at 724 354
\pinlabel $y_1$ at 660 356
\pinlabel $C$ at 600 352
\pinlabel $y_2$ at 545 325
\pinlabel $A$ at 500 287
\pinlabel $y_3$ at 470 232
\pinlabel $C$ at 455 177
\pinlabel $y_4$ at 465 117
\pinlabel $A$ at 490 67
\pinlabel $y_5$ at 535 23

\endlabellist
\centering
\includegraphics[width = 60 ex]{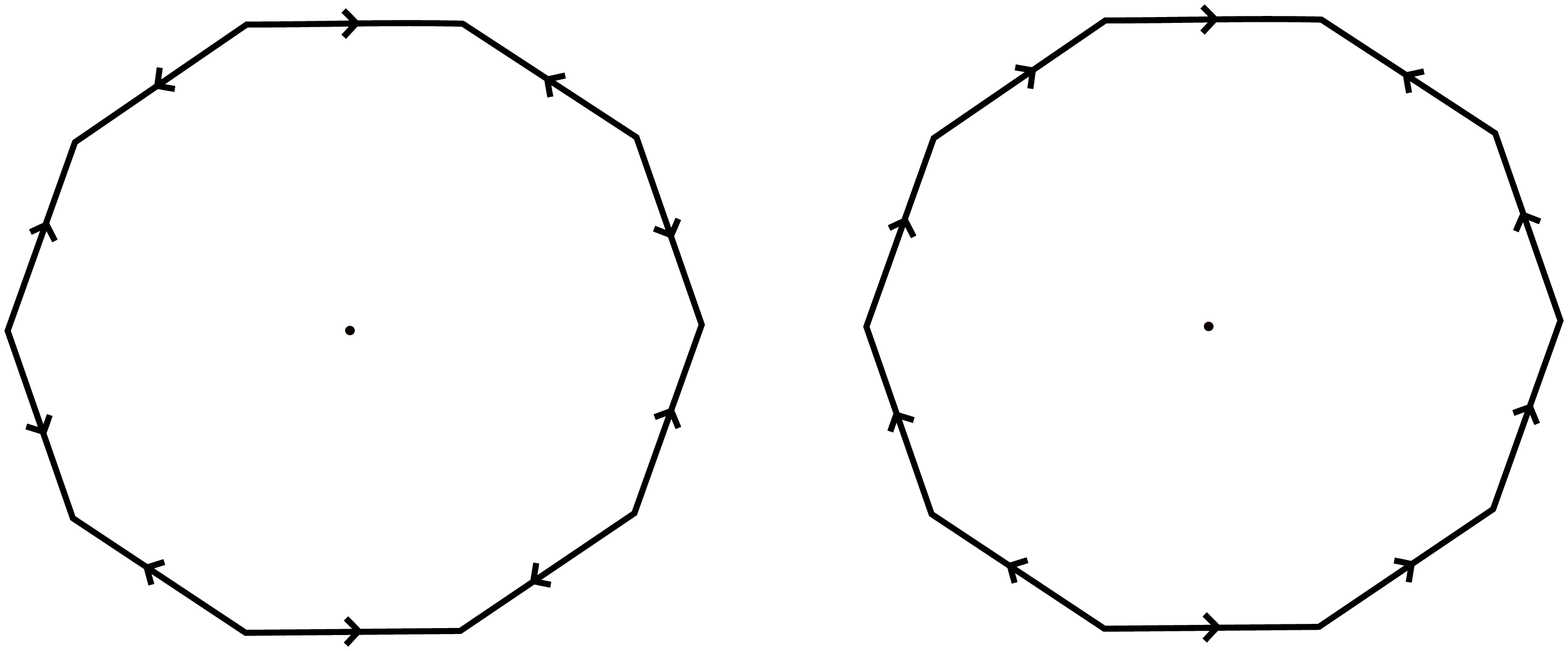}
\caption{Two realizations of the action $D$ on $S_2$.}
\label{fig:2}
\end{figure}
\end{example}
 
\noindent For a positive even integer $n$, we denote an $n$-gon by $\p_{n}$, the reduced boundary word of $\p_n$ (i.e. side-pairing) by $W(\p_n)$, and the quotient surface $W(\p_n)$ determines by $S(\p_n)$. It is implicitly assumed that $\p_n$ comes equipped with some $W(\p_n)$. Taking inspiration from Example~\ref{eg:polygons}, we have the following definition.

\begin{definition} \label{def:polygon_data set}
Let $D$ be an irreducible Type 1 action, that is, a Type 1 action with $g_0(D)=0$. Then:
\begin{enumerate}[(i)]
\item The \textit{topological polygon associated with} $D$ is defined by $\p_D := \p_{k(D)}$, where
$$k(D) := \begin{cases}
2n, & \text { if } n_1,n_2 \neq 2, \text{ and } \\
n, & \text{otherwise, }
\end{cases}$$
and for $0 \leq m \leq n-1$, we define
$$\small 
W(\p_{k(D)}) =
\begin{cases}
\displaystyle  
  \prod_{i=1}^{n} a_{2i-1} a_{2i} \text{ with } a_{2m+1}^{-1}\sim a_{2z}, & \text{if } k(D) = 2n, \text{ and } \\
\displaystyle
 \prod_{i=1}^{n} a_{i} \text{ with } a_{m+1}^{-1}\sim a_{z}, & \text{otherwise, }
\end{cases}$$
$$\text{where }\small z \equiv m+qj \pmod{n}, \,q= (n/n_2)c_3^{-1}, \text{ and } j=n_{2}-c_{2}.$$
\item The \textit{rotational angle associated with $D$} is defined by 
$$\theta_D =2\pi c_3^{-1}/n_3, \text{ where }c_3c_3^{-1} \equiv 1 \pmod{n_3}.$$
\end{enumerate}
\end{definition}

\noindent First, we will restrict our analysis to irreducible Type 1 actions on $S_g$. 
In this direction, our goal will be to establish the following theorem, which gives an explicit hyperbolic structure on $S_g$ that realizes such actions as isometries.
\begin{theorem} \label{main}
An irreducible Type 1 cyclic action $D$ can be realized as a rotation of $\p_D$ by $\theta_D$. 
\end{theorem}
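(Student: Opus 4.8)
The plan is to verify that the combinatorial data packaged in Definition~\ref{def:polygon_data set} actually assembles into a hyperbolic polygon whose side-pairing rotation realizes the prescribed $C_n$-action. I would proceed in four stages: (1) construct the polygon as a genuine hyperbolic object; (2) confirm the side-pairing $W(\p_D)$ produces a closed surface of the correct genus; (3) identify the quotient orbifold and verify its cone-point data matches $D$; and (4) check that the rotation $\theta_D$ induces exactly the local rotation numbers recorded in $D$.

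For stage (1), I would take a regular hyperbolic $k(D)$-gon centered at the origin of the hyperbolic plane, chosen so that its interior angles sum correctly for the identifications to close up smoothly. The key constraint is that the total angle around each image vertex in $S(\p_D)$ equals $2\pi$ at ordinary points (and the appropriate cone angle at the distinguished lift $P_h$ of the order-$n$ cone point). Since regular hyperbolic polygons come in a continuous family of angles (shrinking the interior angle as the polygon grows), I can always select the unique size realizing the required angle sum; this is where hyperbolicity—as opposed to Euclidean geometry—is essential, and the dependence of area on angle via Gauss-Bonnet pins down the polygon uniquely. The rotation by $\theta_D = 2\pi c_3^{-1}/n$ is then a genuine hyperbolic isometry fixing the center.

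Stage (2) is the combinatorial heart. I would treat $W(\p_D)$ as a word in the edges with the pairing $a_{2m+1}^{-1} \sim a_{2z}$ (resp. $a_{m+1}^{-1} \sim a_z$), where $z \equiv m + qj \pmod n$, and trace out the vertex cycles of the resulting identification, counting the number of vertex orbits $V$, edge orbits $E$, and the single face $F=1$ to compute $\chi = V - E + F$ and hence recover $g$ via $2 - 2g = \chi$. The delicate point is that the offset $qj$ with $q = (n/n_2)c_3^{-1}$ and $j = n_2 - c_2$ is precisely engineered so that the rotation by one "step" permutes the edges consistently with the pairing—i.e. so that $W(\p_D)$ is invariant (up to relabeling) under the cyclic shift realizing $\theta_D$. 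I expect this compatibility between the shift $m \mapsto m+1$ and the pairing rule $z \equiv m + qj$ to be exactly what makes the rotation descend to a well-defined homeomorphism of $S(\p_D)$; verifying it amounts to a modular-arithmetic check that the permutation $m \mapsto z$ conjugates the shift to itself.

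Stages (3) and (4) I would handle together by analyzing the quotient $S(\p_D)/\langle \text{rotation}\rangle$. The center of $\p_D$ projects to the distinguished cone point $P$ of order $n$, contributing local rotation $\theta_D = 2\pi c_3^{-1}/n$ as required. The remaining two cone points $P_1, P_2$ of orders $n_1, n_2$ arise from the vertex orbits of the identification, and I would confirm their orders and local angles $2\pi c_i^{-1}/n_i$ by examining how the edges cluster into vertex cycles under the pairing—the quantities $n_2 - c_2$ and $(n/n_2)c_3^{-1}$ in the definition of $z$ are chosen so that the second cone point receives rotation number $c_2$ and the constraint $\sum (n/n_j)c_j \equiv 0 \pmod n$ from Definition~\ref{defn:data_set}(iv) is automatically satisfied. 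The main obstacle, in my estimation, is stage (2)–(3): showing that this one specific algebraic offset yields both the correct genus \emph{and} the correct cone-point orders simultaneously, rather than merely one or the other. I would anticipate that Example~\ref{eg:polygons} serves as the guiding computation, and that the general proof is essentially a careful bookkeeping of vertex cycles parametrized by residues mod $n$, with the Riemann--Hurwitz equation~\eqref{eqn:riemann_hurwitz} providing an independent check that the genus coming out of the Euler-characteristic count agrees with $g(D)$.
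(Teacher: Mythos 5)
Your plan follows essentially the same route as the paper: the genus verification via vertex-cycle counting, Euler characteristic, and the Riemann--Hurwitz equation is exactly the content of Lemma~\ref{lg}, and your stages (3)--(4) --- center gives the order-$n$ cone point, the two vertex classes give the cone points of orders $n_1,n_2$, an angle computation shows the local rotation at the second is $2\pi c_2^{-1}/n_2$, and condition (iv) of Definition~\ref{defn:data_set} then forces the first --- is precisely the paper's argument. The only divergence is your stage (1), which addresses the hyperbolic-metric realization that the paper treats separately in Remark~\ref{rem:poly}; the theorem itself is proved there as a purely topological/combinatorial statement.
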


\noindent In order to establish Theorem~\ref{main}, we will need the following technical lemma. 

\begin{lemma} \label{lg}
 Let $D$ be an irreducible Type 1 action. Then $S(\p_{k(D)}) \approx S_g$.  
\end{lemma}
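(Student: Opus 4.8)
The plan is to show directly that $S(\p_{k(D)})$ is a closed orientable surface whose Euler characteristic equals $2-2g$, so that its homeomorphism type is forced. Since $S(\p_{k(D)})$ is the quotient of a single polygon by a complete side-pairing, it is automatically a closed connected surface, and it is orientable because in $W(\p_{k(D)})$ every identified pair occurs in the form $a^{-1}\sim a'$; equivalently, each edge of the quotient appears once with each orientation as one reads the boundary word. Thus it remains to compute $\chi = V - E + F$, where $F = 1$ and $E = k(D)/2$, since each relation fuses two sides into one edge. The entire content of the lemma is therefore the vertex count $V$, which I would obtain by tracking the identifications the side-pairing induces on the $k(D)$ corners of the polygon.

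I would carry out this count by labelling the corners $v_0,\dots,v_{k(D)-1}$ cyclically, with the side $a_i$ running from $v_{i-1}$ to $v_i$. Consider first the generic case $k(D)=2n$. The relation $a_{2m+1}^{-1}\sim a_{2z}$ with $z\equiv m+qj\pmod n$ glues initial endpoint to initial endpoint and terminal to terminal, yielding $v_{2m+1}\sim v_{2z-1}$ and $v_{2m}\sim v_{2z}$. Hence the even corners are permuted among themselves by the translation $m\mapsto m+qj$ on $\mathbb{Z}/n$, the odd corners by $m\mapsto m+qj-1$, and the two families never merge. As the equivalence classes of such a relation are exactly the cycles of the translation, and a translation by $d$ on $\mathbb{Z}/n$ has $\gcd(d,n)$ cycles, I obtain $V=\gcd(qj,n)+\gcd(qj-1,n)$.

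It then remains to evaluate these two gcd's, and this is where the data-set axioms enter. Writing $qj=(n/n_2)c_3^{-1}(n_2-c_2)\equiv -(n/n_2)c_3^{-1}c_2\pmod n$ and using that $c_3^{-1}$ is a unit modulo $n$ while $\gcd(c_2,n_2)=1$, I get $\gcd(qj,n)=\gcd((n/n_2)c_2,n)=(n/n_2)\gcd(c_2,n_2)=n/n_2$. For the second gcd I would multiply by the unit $c_3$: since $c_3(qj-1)\equiv -(n/n_2)c_2-c_3\pmod n$, the defining congruence $(n/n_1)c_1+(n/n_2)c_2+c_3\equiv 0\pmod n$ of axiom (iv) turns the right-hand side into $(n/n_1)c_1$, whence $\gcd(qj-1,n)=\gcd((n/n_1)c_1,n)=n/n_1$. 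Therefore $\chi=(n/n_1+n/n_2)-n+1$, and substituting $g_0=0$, $\ell=3$, $n_3=n$ into the Riemann--Hurwitz equation (R-H) gives exactly $2-2g=1-n+n/n_1+n/n_2$. Matching the two shows $\chi(S(\p_{k(D)}))=2-2g$, so $S(\p_{k(D)})\approx S_g$.

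The remaining work is the degenerate case $k(D)=n$, which occurs precisely when one of $n_1,n_2$ equals $2$; here I would first record that the realization forces this order-$2$ cone point to be $n_1$, since only then is the pairing permutation $s\mapsto s+qj-1$ of the $n$ sides a fixed-point-free involution. Indeed $2(qj-1)\equiv 0\pmod n$ reduces, via the same multiplication by $c_3$ and axiom (iv) together with $\gcd(c_1,n_1)=1$, to $n_1\mid 2$. In this case the two relations $v_m\sim v_{m+qj}$ and $v_{m+1}\sim v_{m+qj-1}$ act on a single family of corners through the translations $qj$ and $qj-2$, so $V=\gcd(qj,qj-2,n)=\gcd(2,qj,n)$, $E=n/2$, and the analogous gcd bookkeeping—now using the parity forced on $c_3$ by (iv)—again yields $\chi=2-2g$. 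I expect the main obstacle to be exactly this bookkeeping: verifying that the induced translations have the asserted number of cycles and that the gcd identities persist in the degenerate case, where the interplay between the parity constraint coming from $n_1=2$ and the congruence (iv) must be handled by hand rather than by the clean unit-multiplication argument available in the generic case.
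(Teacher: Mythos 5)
Your proposal is correct and follows essentially the same route as the paper: identify the corner identifications induced by the side-pairing as translations on $\mathbb{Z}/n$ acting separately on the even and odd corners, count orbits (you use the cycle count $\gcd(d,n)$ where the paper invokes Burnside and computes the orders of $2s$ and $2s-2$, which is equivalent), evaluate the two gcd's as $n/n_2$ and $n/n_1$ via condition (iv), and match against Riemann--Hurwitz. The only piece you leave as ``bookkeeping'' is the final gcd identity in the $k(D)=n$ case, which the paper also omits (``a similar argument works''); it does close, since condition (iv) with $n_1=2$ gives $n/n_2=\gcd(c_3+n/2,n)\in\{1,2\}$, whence $\gcd(2,qj,n)=\gcd(2,n/n_2)=n/n_2$ as required.
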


\begin{proof}
First, we consider the case when $n_1,n_2 \neq 2$. For $0\leq i\leq 2n-1$, set $v_i,v_{i+1}$ to be the initial  and the terminal point of $a_i$, respectively. Let $X= \{v_0,~v_1,~\dots,~v_{2n-1} \}$, and let $G=\langle x \rangle$ be the cyclic group of order $2n$, generated by $x$. Consider the action of $G$ on $X$ given by $$x\cdot v_i = v_{i+1}, \text{ where } v_{2n}=v_0.$$ The side-pairing yields the relations $$v_{2i+1} \sim v_{2i+1+2s} \text{ and } v_{2i} \sim v_{2i+2s-2}, \text{ where } s = qj,$$ which can also be written as $$v_{2i+1} \sim x^{2s}\cdot v_{2i+1} \text{ and }
   v_{2i} \sim x^{2s-2}\cdot v_{2i}.$$ The subgroups $G_0=\langle x^{2s-2} \rangle,~G_1= \langle x^{2s} \rangle$ of $G$ act on sets $$X_0=\{v_0,~v_2,~\dots,~v_{2n-2}\} \text{ and } X_1=\{v_1,~v_3,~\dots,~v_{2n-1}\}$$ respectively. Observe that the canonical cellular decomposition of the resultant quotient surface has $|X_0/G_0|+|X_1/G_1|$ vertices (0-cells). Applying Burnside's Lemma~\cite[Theorem 3.22]{JR}, we have for $i = 0,1$, $$|X_{i} / G_{i}|=\frac{n}{|G_i|},$$ and so the number of vertices is given by $n/|G_0|+n/|G_1|$, where $|G_0|$ and $|G_1|$ are the orders of $2s-2$ and $2s$ in $\mathbb{Z}_{2n}$ respectively. 

 Since $\gcd(c_2,n_2)= \gcd(c_3,n)=1$, the order of $2s$ in $\mathbb{Z}_{2n}$ is $n_2$. Also, condition $(iii)$ on the data set $D$ implies that $$2s -2=-\frac{2n(n_1 -c_1)c^{-1}_{3}}{n_1} \pmod{2n},$$ and therefore the order of $2s-2$ in $\mathbb{Z}_{2n}$ is $n_1$. So, the number of vertices of the 
resultant surface is given by $\frac{n}{n_1}+\frac{n}{n_2}$. By the Riemann-Hurwitz equation, we have $$2-2g=-n+\frac{n}{n_1}+\frac{n}{n_2}+1,$$ and a simple Euler characteristic argument yields the result. A similar argument works for the case when either $n_1$ or $n_2$ equals $2$. 
\end{proof}

\noindent We are now ready to prove Theorem~\ref{main}.
\begin{proof}(Theorem~\ref{main}.)
Once again, we begin with the case when $n_1,n_2 \neq 2$. By Lemma~\ref{lg}, the pairing on the polygon $\p_{D}$ yields $S_g$. Define a homeomorphism $t: \p_{D} \to \p_{D}$ by $$t(a_{i})=a_{i+2c^{-1}_{3}}, \text{ for } 0\leq i\leq 2n-1,$$ that is, 
 a rotation of $\p_D$ by $\theta_D = 2\pi c_3^{-1}/n_3$. Clearly,  $t$ generates a $C_{n}$-action on $S_{g}$. We wish to prove that data set for $t$ is $D$.  

Under the branched covering $\pi: S_g\longrightarrow \mathcal{O} := S_{g}/C_n$, a cone point $p \in \mathcal{O}$ of order $k$ lifts to an orbit of size $n/k$. The center of $\p_{D}$ corresponds to a cone point of order $n$.  In the notation of Lemma~\ref{lg}, the sets $X_0\text{ and }X_1$ correspond to cone points of orders $n_1$ and $n_2$ respectively. 
 In the view of the condition $(iv)$ on the data set in Definition~\ref{defn:data_set}, it suffices to show that local rotational angle $(\theta_2)$ around the cone point of order $n_2$ is $2\pi c_2^{-1}/n_2$.
  
 Viewing the surface from $v_{2m+1}$, the angle between $a_{2m}$ and $a^{-1}_{2m+1}$ is $2\pi(n_{2}-1)/n_2$. The stabilizer of the orbit of size $n/n_2$ is generated by $t^{n/n_2}$. If $2\pi(n_{2}-1)/n_2=j^{\prime}\theta_{2}$, then 
 $$\displaystyle a_{2m+1}^{-1} \sim t^{\frac{n}{n_2}j^{\prime}}(a_{2m}) \sim a_z, \text{ where }  z \equiv 2m+\frac{2n}{n_2}{c_3}^{-1} j'\pmod{2n}.$$ By the side-pairing condition, 
 $$z \equiv 2m+j\frac{2n}{n_2}c^{-1}_{3} \pmod{2n},$$ which implies $$j^{\prime} \equiv -c_{2} \pmod{n_2}.$$ Therefore we have, $$-c_{2}\theta_{2} \equiv \frac{2\pi(n_{2}-1)}{n_2} \pmod{n_2},$$ that is, $\theta_{2}=2\pi c_2^{-1}/n_2$, which establishes the result. An analogous argument works for the other case.
\end{proof}

\begin{remark}
\label{rem:poly}
Suppose that $D$ is an irreducible Type 1 action with $g(D) \geq 2$. When one of the $n_i$ equals $2$, it follows from basic hyperbolic trigonometry that the polygon $\p_D$ determines a unique hyperbolic structure $\G(D)$ on $S_{g(D)}$ that realizes $D$ as an isometry. However, when $n_1,n_2 \neq 2$, it can be shown that there exists one-parameter family $\{\G(D,d)\}$ of hyperbolic structures carried by $\p_D$ in which each structure $\G(D,d)$ is uniquely determined by the hyperbolic distance $d$ between the center and one of the vertices of $\p_D$.
\end{remark}

\noindent The generalization of Theorem~\ref{main} to the case when $g_0(D)> 0$ will involve a  subtle topological construction, which we detail in the following remark. 

\begin{remark}\label{rem:l5_top}
Consider a $C_n$-action $D$ on $S_g$. Suppose that there exist distinct orbits $O_1$ and $O_2$ of $D$ of size $k$ such that the angles $\theta_i$ induced by $D$ around the points in the $O_i$ satisfy $$\theta_1 + \theta_2 \equiv 0 \pmod{2 \pi}.$$ 
For $1 \leq j \leq k$, we remove mutually disjoint disks $B_{i,j}$, cyclically permuted by $D$, around the points in the $O_i$, and attach $k$ annuli connecting the resultant boundary components $\partial B_{1,j}$ with the $\partial B_{2,j}$. This induces an action $\tilde{D}$ with $g(\tilde{D}) = g(D)+k$ and $g_0(\tilde{D}) = g_0(D)+1$. 
\end{remark}

\noindent We will now describe a necessary condition on a $C_n$-action $D$ that would facilitate a construction as in Remark~\ref{rem:l5_top}, assuming that the size of the chosen orbits $O_i$ is strictly less than $n$. 

\begin{definition}\label{def:self_comp_ds}
For $\ell \geq 4$, let
$$
D = (n,g_0; (c_1,n_1), (c_2,n_2),\ldots, (c_{\ell},n_{\ell})),
$$
be a $C_n$-action. Then $D$ is said to be $(r,s)$-\textit{self compatible}, if there exist $1 \leq r < s \leq \ell$ such that
\begin{enumerate}[(i)]
\item $n_r = n_s = m$, and
\item $\displaystyle c_r+c_s \equiv 0 \pmod{m}$. 
\end{enumerate}
\end{definition}

\noindent It is now apparent that the new action $\tilde{D}$ constructed from $D$ in Remark~\ref{rem:l5_top} will have the following description.

\begin{lemma} \label{lem:self_comp_ds}
Let $D$ be an $(r,s)$-self compatible $C_n$-action as in Definition~\ref{def:self_comp_ds}. Then
 the tuple $$\tilde{D}=(n,g_0+1;(c_{1},n_{1}),\dots,\widehat{(c_{r},n_{r})}, \ldots, \widehat{(c_{r},n_{s})}, 
 \dots, (c_{\ell},n_{\ell}))$$ defines a $C_n$-action such that 
 $$g(\tilde{D}) = g(D)+n/n_r \text{ and }g_0(\tilde{D}) = g_0(D)+1.$$
\end{lemma}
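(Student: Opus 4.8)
The plan is to combine the topological construction of Remark~\ref{rem:l5_top} with a direct check that the resulting tuple satisfies the data set axioms of Definition~\ref{defn:data_set}, and then to confirm the genus formula independently via the Riemann--Hurwitz equation. The first and only genuinely delicate step is translating the arithmetic self-compatibility condition on the $c_i$ into the geometric angle-compatibility that Remark~\ref{rem:l5_top} requires.

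First I would set up this translation. The cone points $x_r$ and $x_s$, both of order $n_r = n_s = m$, lift under $D$ to distinct orbits $O_r$ and $O_s$, each of size $n/m$, around whose points $D$ induces local rotation angles $\theta_r = 2\pi c_r^{-1}/m$ and $\theta_s = 2\pi c_s^{-1}/m$. Since the hypothesis $c_r + c_s \equiv 0 \pmod{m}$ forces $c_s^{-1} \equiv -c_r^{-1} \pmod{m}$, we obtain $\theta_r + \theta_s \equiv 0 \pmod{2\pi}$, so $O_r$ and $O_s$ are compatible in the precise sense demanded by Remark~\ref{rem:l5_top}. Applying that construction with $k = n/m$ removes $D$-invariant disks around the two orbits and attaches $n/m$ equivariant annuli joining the resultant boundary components; this deletes the two cone points from the quotient orbifold, raises the quotient genus by one, and raises the total genus by $n/m$. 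Every other cone point $x_j$ with $j \neq r,s$ is left untouched, so its local rotation data $(c_j,n_j)$ is preserved. Hence the cone-point data of the new action is exactly that of $D$ with the $r$th and $s$th entries deleted, which is the tuple $\tilde{D}$, and $g_0(\tilde{D}) = g_0(D)+1$ by construction.

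Next I would verify that $\tilde{D}$ is a bona fide data set, so that by Proposition~\ref{prop:ds-action} it genuinely corresponds to a $C_n$-action. Conditions (i)--(iii) of Definition~\ref{defn:data_set} are inherited from $D$: after deleting two entries from the $\ell \geq 4$ cone points we still have $\ell - 2 \geq 2$, so $r(\tilde{D}) = 0$, and the divisibilities $n_i \mid n$ and coprimalities $\gcd(c_i,n_i)=1$ of the surviving pairs are unchanged. The one condition requiring work is (iv): deleting the two terms from $\sum_j (n/n_j)c_j$ changes the sum by $-(n/m)(c_r+c_s)$, and since $c_r+c_s \equiv 0 \pmod{m}$ this quantity is a multiple of $n$, so the sum remains $\equiv 0 \pmod{n}$. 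This is exactly the point at which the self-compatibility hypothesis is used a second time.

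Finally, as a consistency check, I would recover the genus formula algebraically by applying the Riemann--Hurwitz equation (R-H) to $\tilde{D}$. Writing $\frac{2-2g(\tilde{D})}{n} = 2 - 2(g_0+1) + \sum_{j \neq r,s}\left(\frac{1}{n_j}-1\right)$ and substituting the equation for $D$, the constant and $g_0$ contributions cancel while the two removed terms $\frac{1}{m}-1$ each survive with the opposite sign, leaving $\frac{2-2g(\tilde{D})}{n} = \frac{2-2g(D)}{n} - \frac{2}{m}$, that is, $g(\tilde{D}) = g(D) + n/m = g(D) + n/n_r$, in agreement with the geometric count. The overall argument is essentially bookkeeping; the only place demanding genuine care is matching the residue condition on $c_r + c_s$ (stated on the $c_i$) with the angle condition on $\theta_r+\theta_s$ (governed by the inverses $c_i^{-1}$), together with confirming that the annulus attachment truly eliminates both cone points while adding precisely one handle to the quotient.
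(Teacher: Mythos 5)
Your proof is correct and follows essentially the same route as the paper, which states the lemma as an immediate consequence of the annulus-attachment construction in Remark~\ref{rem:l5_top} and offers no further argument. Your write-up simply makes explicit the three points the paper leaves implicit --- that $c_r+c_s\equiv 0\pmod m$ gives $\theta_r+\theta_s\equiv 0\pmod{2\pi}$, that condition (iv) of Definition~\ref{defn:data_set} survives the deletion because $(n/m)(c_r+c_s)\equiv 0\pmod n$, and the Riemann--Hurwitz computation of $g(\tilde D)$ --- all of which are accurate.
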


\noindent We denote the action $\tilde{D}$ described in Lemma~\ref{lem:self_comp_ds} by $\l D,(r,s)\r$. We will see later that this kind of compatibility will hold great relevance in the realization of Type 2 actions.

\begin{remark}
\label{rem:triv_self_comp}
It is quite apparent that the construction in Remark~\ref{rem:l5_top} is always possible, if one assumes that the sizes of the pair of chosen orbits equals $n$. We will call such a compatibility a \textit{trivial self compatibility}. While the new $C_n$-action $\tilde{D}$ in this case will have the same fixed point data as $D$, the construction increases the genus of the quotient orbifold by one, and so we have:
$$\tilde{D} = (n,g_0+1; (c_1,n_1), (c_2,n_2),\ldots, (c_{\ell},n_{\ell})).$$ We will denote the 
action $\tilde{D}$ describe above (with $g(\tilde{D}) = g(D) + n$) by $\l D\r$, and we will denote the data set $\tilde{D}$ obtained from $D$ by an inductive application of the trivial self compatibility $g'$ times by $\l D,g' \r$. Note that $\l D,g' \r$ is topologically equivalent to the action obtained from $D$ by removing cyclically permuted (mutually disjoint) disks around points in an orbit of size $n$, and then attaching $n$ copies of the surface $S_{g',1}$ along the resultant boundary components. (Here, $S_{g,b}$ denotes the surface of genus $g$ with $b$ boundary components.) Conversely, given an action of type $\tilde{D} = \l D, g'\r$, for some $g' \geq 1$, one can reverse the topological process described above to obtain the action $D$.
\end{remark}

 The following theorem, which is simply a formalization of the $\l D,g' \r$ type construction described in Remark~\ref{rem:triv_self_comp}, will play a central role in the geometric realizations of arbitrary Type 1 actions.

\begin{theorem}\label{lem:sph_act_to_g}
Every Type 1 action $\tilde{D}$ with $g_0(\tilde{D}) > 0$ is of the form $\tilde{D} = \l D,  g_0(\tilde{D})\r$.
\end{theorem}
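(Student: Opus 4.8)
The plan is to exhibit the irreducible Type 1 action $D$ explicitly and then verify, purely at the level of data sets, that applying the trivial self compatibility $g_0(\tilde{D})$ times to $D$ reproduces $\tilde{D}$. Since $\tilde{D}$ is a Type 1 action, after relabelling the cone points we may write $\tilde{D} = (n, g_0; (c_1,n_1),(c_2,n_2),(c_3,n))$ with $g_0 := g_0(\tilde{D}) > 0$. I would then set $D := (n, 0; (c_1,n_1),(c_2,n_2),(c_3,n))$, i.e.\ the tuple obtained from $\tilde{D}$ by replacing the orbifold genus $g_0$ by $0$ while keeping the cone-point data unchanged.

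First I would check that $D$ is a legitimate data set, and hence an irreducible Type 1 action. The crucial point is that conditions (i)--(iv) of \defref{defn:data_set} make no reference to $g_0$: condition (i) is vacuous here, since $\ell = 3 \neq 0$ forces $r = 0$; and conditions (ii)--(iv), being assertions only about the pairs $(c_i,n_i)$, are inherited verbatim from $\tilde{D}$. Thus $D$ satisfies \defref{defn:data_set}, has $\ell = 3$, $g_0(D) = 0$, and a cone point of order $n$, so $D$ is an irreducible Type 1 action in the sense of \defref{def:types_of_actions}.

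Next I would track the two relevant invariants under the iterated construction $\l D, g_0 \r$ of \remref{rem:triv_self_comp}. By construction, each trivial self compatibility fixes the cone-point data and raises the orbifold genus by one; hence the tuple of $\l D, g_0 \r$ is $(n, g_0; (c_1,n_1),(c_2,n_2),(c_3,n))$, which is precisely the tuple of $\tilde{D}$. It then remains to confirm that the surface genera also agree. Since the Riemann--Hurwitz relation \eqnref{eqn:riemann_hurwitz} is affine-linear in $g_0$ with slope $2n$, replacing $0$ by $g_0$ in the orbifold genus changes the genus by exactly $n g_0$, so that $g(\tilde{D}) = g(D) + n g_0$; this matches the genus $g(D) + n g_0$ recorded for $\l D, g_0 \r$ in \remref{rem:triv_self_comp}.

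Finally, with the full tuple and the genus matched, I would invoke \propref{prop:ds-action}: a data set of a given degree and genus determines a unique conjugacy class of $C_n$-action, so the equality of tuples yields $\tilde{D} = \l D, g_0 \r$, as required. The one point needing genuine care---and the nearest thing to an obstacle---is confirming that $D$ is actually realizable, i.e.\ that its Riemann--Hurwitz genus is a non-negative integer. This holds because $g(D) = g(\tilde{D}) - n g_0$ is automatically an integer, while the explicit value $g(D) = \tfrac12\!\left(1 + n - \tfrac{n}{n_1} - \tfrac{n}{n_2}\right)$ is positive, since $n_1, n_2 \geq 2$ force $\tfrac{n}{n_1} + \tfrac{n}{n_2} \leq n$.
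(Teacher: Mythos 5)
Your proof is correct and follows exactly the route the paper intends: the paper states this theorem without an explicit proof, calling it ``simply a formalization'' of \remref{rem:triv_self_comp}, and your argument supplies the missing verification---that the truncated tuple $D$ obtained by setting the orbifold genus to zero is a legitimate data set (conditions (i)--(iv) of \defref{defn:data_set} being independent of $g_0$), that the cone data and genus of $\l D, g_0(\tilde D)\r$ match those of $\tilde D$, and that \propref{prop:ds-action} then forces equality of conjugacy classes. The one point of genuine care, which you handle correctly, is that $g(D) = \tfrac12\bigl(1+n-\tfrac{n}{n_1}-\tfrac{n}{n_2}\bigr)$ is a positive integer, so that $D$ is actually realizable as an irreducible Type 1 action.
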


\noindent The following corollary, which gives an explicit topological realization for generic Type 1 actions, follows directly from Theorems~\ref{main} and~\ref{lem:sph_act_to_g}. 

\begin{corollary}
For $g \geq 2$, a Type 1 action $D$ on $S_g$ with $g_0(D) > 0$ can be realized explicitly as the rotation by $\theta_D = 2\pi c_3^{-1}/n_3$ of a hyperbolic polygon $\p_D$ with a suitable side-pairing $W(\p_D)$, where $\p_D$ is a hyperbolic  $k(D)$-gon with
$$ \small k(D) := \begin{cases}
2n(1+2g_0(D)), & \text { if } n_1,n_2 \neq 2, \text{ and } \\
n(1+4g_0(D)), & \text{otherwise, }
\end{cases}$$
and for $0 \leq m\leq n-1$, 
$$ \small
W(\p_D) =
\begin{cases}
\displaystyle  
  \prod_{i=1}^{n} Q_ia_{2i-1} a_{2i} \text{ with } a_{2m+1}^{-1}\sim a_{2z}, & \text{if } k(D) = 2n, \text{ and } \\
\displaystyle
 \prod_{i=1}^{n} Q_ia_{i} \text{ with } a_{m+1}^{-1}\sim a_{z}, & \text{otherwise,}
\end{cases}$$
where $\displaystyle z \equiv m+qj \pmod{n}, \,q= (n/n_2)c^{-1},j=n_{2}-c_{2}$, and \\ $\small Q_r = \prod_{s=1}^{g_0(D)} [x_{r,s},y_{r,s}], \, 1 \leq r \leq n.$
\end{corollary}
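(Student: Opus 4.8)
The plan is to deduce the statement by combining \thmref{lem:sph_act_to_g}, which strips the quotient genus off the action, with \thmref{main}, which realizes the resulting irreducible action on a polygon, and then to track exactly how each trivial self compatibility alters the fundamental polygon and its side-pairing. First I would invoke \thmref{lem:sph_act_to_g} to write $D = \l D_0,g_0(D)\r$, where $D_0 = (n,0;(c_1,n_1),(c_2,n_2),(c_3,n))$ is the underlying irreducible Type 1 action recovered by reversing the process of \remref{rem:triv_self_comp}. Because $D_0$ carries the same cone data $(c_i,n_i)$ as $D$ and has $g_0(D_0)=0$, \thmref{main} realizes it as the rotation of $\p_{D_0}$ by $\theta_{D_0}=\theta_D = 2\pi c_3^{-1}/n$, where $\p_{D_0}$ is the $k(D_0)$-gon carrying the side-pairing of Definition~\ref{def:polygon_data set}; here $k(D_0)=2n$ when $n_1,n_2\neq 2$ and $k(D_0)=n$ otherwise.

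The heart of the argument is to realize the $g_0(D)$-fold trivial self compatibility at the level of this polygon. By \remref{rem:triv_self_comp}, with $g' = g_0(D)$ the action $\l D_0,g'\r$ is obtained by deleting $n$ cyclically permuted invariant disks around the points of a free orbit of size $n$ and attaching $n$ copies of $S_{g',1}$ along the resulting boundary circles. I would realize this by choosing the free orbit to consist of one interior point in each of the $n$ fundamental sectors of the $t$-action on $\p_{D_0}$ and attaching at each the handles of $S_{g',1}$; cutting the attached pieces open contributes to the boundary word of the enlarged fundamental polygon the standard handle cluster $Q_i = \prod_{s=1}^{g'}[x_{i,s},y_{i,s}]$ of length $4g'$, inserted before the letters coming from the $i$-th sector. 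Since $t$ sends block $i$ to block $i+c_3^{-1}$ and $\gcd(c_3^{-1},n)=1$, it permutes the clusters $\{Q_i\}$ in a single cycle and hence extends to an order-$n$ rotation of the enlarged polygon. Reading the boundary word then gives $W(\p_D)=\prod_{i=1}^{n}Q_i a_{2i-1}a_{2i}$ in the first case and $\prod_{i=1}^{n}Q_i a_i$ in the second, with the old pairing $a_{2m+1}^{-1}\sim a_{2z}$ (respectively $a_{m+1}^{-1}\sim a_z$) unaffected and each cluster self-paired in the commutator fashion. The side count grows by exactly $4ng'$, so $k(D)=k(D_0)+4n\,g_0(D)$, which is $2n(1+2g_0(D))$ and $n(1+4g_0(D))$ in the two cases, as asserted.

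It then remains to verify that $S(\p_D)\approx S_g$ and that the extended rotation reproduces the data set $D$. For the first I would repeat the vertex-cycle count of \lemref{lg}, now enlarged by the vertices and edges contributed by the clusters $Q_i$; since the Riemann-Hurwitz equation for $D$ yields $g(D)=g(D_0)+n\,g_0(D)$, matching the genus produced by the $4n\,g_0(D)$ additional sides, the quotient surface is indeed $S_g$. For the second, the center, the two cone-point vertex classes $X_0,X_1$, and the angle computation of \thmref{main} are untouched by the insertions, so the rotation by $\theta_D$ recovers the cone data $(c_1,n_1),(c_2,n_2),(c_3,n)$ together with $g_0(D)$ handles in the quotient; hence the realized action has data set $D$. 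Finally, for the hyperbolic structure I would argue as in the irreducible case of \remref{rem:poly}: one takes a hyperbolic $k(D)$-gon invariant under rotation by $\theta_D$ and prescribes its vertex angles so that each identified vertex cycle closes up with total angle $2\pi$ at the regular (handle) points and $2\pi/n_i$ at the cone points, the existence of such a symmetric polygon following from hyperbolic trigonometry with the available degrees of freedom. I expect this last point to be the main obstacle: the combinatorial bookkeeping of the sides and of the word $W(\p_D)$ is routine once the insertion picture is in place, whereas guaranteeing a single hyperbolic $k(D)$-gon that is genuinely $\theta_D$-invariant while simultaneously meeting every angle-sum constraint imposed by the handle cycles is the delicate step.
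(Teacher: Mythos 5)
Your proposal is correct and, at its core, follows the same route as the paper: the paper derives this corollary in one line as a direct consequence of Theorem~\ref{lem:sph_act_to_g} (writing $D=\l D_0, g_0(D)\r$ for an irreducible Type 1 action $D_0$ with the same cone data) combined with Theorem~\ref{main} (realizing $D_0$ as the rotation of $\p_{D_0}$ by $\theta_{D_0}=\theta_D$). The bookkeeping you supply --- inserting one handle cluster $Q_i$ of length $4g_0(D)$ into each of the $n$ fundamental sectors, checking that the rotation permutes the clusters cyclically, and verifying the side count $k(D)=k(D_0)+4n\,g_0(D)$ and the genus via Riemann--Hurwitz --- is exactly the content the paper leaves implicit, and it is right. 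The one place you genuinely diverge is the final metric step: you propose to build a single $\theta_D$-invariant hyperbolic $k(D)$-gon whose vertex angle sums close up at every identified vertex cycle, and you correctly flag this as the delicate point. The paper avoids this entirely: it produces the hyperbolic structure not as one polygon but by the gluing construction of Remark~\ref{rem:l5_geom} and Theorem~\ref{thm:real_type2}, i.e.\ it equips $S_{g(D_0)}$ with the structure carried by $\p_{D_0}$, removes $n$ invariant disks of equal area, and glues on $n$ isometric copies of a hyperbolic $S_{g_0(D),1}$ along boundary circles of matching length $\ell_i$, yielding $\G(\l D_0,g_0(D)\r,(\ell_1,\ldots,\ell_{g_0(D)}))$. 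That route trades your angle-sum existence problem for an elementary matching of boundary lengths, at the cost that the resulting metric is presented as a glued object rather than literally as a hyperbolic $k(D)$-gon; your reading of the corollary is the more literal one, and if you insist on a single invariant hyperbolic polygon you would indeed need to carry out the angle-prescription argument you sketch (a symmetric version of the Poincar\'e polygon theorem with the rotational symmetry imposed), which the paper never does.
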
 

\noindent Given a topological realization of an action $D$ on $S_{g(D)}$, in the following remark, we describe explicit hyperbolic structures on $S_{g(\tilde{D})}$ that realize the actions of types $\tilde{D} = \l D,(r,s) \r $ and $\tilde{D} = \l D, g'\r$. 

\begin{remark}\label{rem:l5_geom}
Consider a $(r,s)$-self compatible $C_n$-action $$D = (n,g_0;(c_1,n_1),\ldots,(c_{\ell},n_{\ell})),$$ and a geometric structure $\G(D)$ on $S_{g(D)}$ that realizes $D$ as an isometry, and let $k = n/n_r$. For $1 \leq i \leq k$, we choose $k$ pairs $O_i,O_i'$ of distinct orbits of $D$ of size $n$. For each $i$, we choose disjoint disks $B_{i,j}$ and $B_{i,j}'$, for $1 \leq j \leq n$, of the same area $a$ around the points in $O_i$ and $O_i'$, respectively, that are cyclically permuted by $D$. After removing these disks, for each $i$, we identify the $k$ pairs $\partial B_{i,j}, \partial B_{i,j}'$ of the resulting boundary components of length $\ell = \ell(a)$ (without any twisting), thereby defining a hyperbolic structure 
$$\G(\l D, (r,s)\r, \ell)$$ on $S_{g(\tilde{D})}$ that realizes $\tilde{D} = \l D, (r,s)\r$, which depends on $\G(D)$ and the parameter $\ell$. Conversely, given a hyperbolic structure of type $\G(\l D, (r,s)\r, \ell)$ on $S_{g(\tilde{D})}$ (for suitably chosen $\ell$), we can reverse this geometric construction  to obtain the structure $\G(D)$ on $S_{g(D)}$. In an analogous manner, we can define a structure $\G(\l D \r, \ell)$ that realizes an action of type $\l D \r$, which when applied inductively yields a structure that realizes a $\l D,g'\r$ type action. This structure, which depends on $\G(D)$ and $g'$ parameters $\ell_i, \,  1 \leq i \leq g'$,  will be denoted by $\G(\l D ,g' \r , (\ell_1, \ldots,\ell_{g'}))$.
\end{remark} 

We have shown that for a Type 1 action $\tilde{D}$ with $g_0(\tilde{D}) > 0$, one can apply Theorem~\ref{lem:sph_act_to_g} to get another Type 1 action $D$ with identical fixed point data with $g_0(D) = 0$. We can then use Theorem~\ref{main} to obtain a structure $\G(D)$ that realizes $D$. Finally, by Remarks~\ref{rem:l5_top},~\ref{rem:triv_self_comp}, and~\ref{rem:l5_geom}, we can construct a hyperbolic structure of type $\G(\l D, g'\r, (\ell_1,\ldots,\ell_{g'}))$ that realizes $\tilde{D}$. Thus, it follows immediately that a structure of type $\G(\l D, g'\r, (\ell_1,\ldots,\ell_{g'}))$ realizes an arbitrary Type 1 action, which we formalize in the following theorem.

 \begin{theorem} \label{thm:real_type2}
Consider a Type 1 action $\tilde{D}$ with $g_0(\tilde{D}) = g'> 0$. Then there exists a Type 1 action $D$ with $g_0(D) = 0$, and positive real numbers $\ell_1, \ldots, \ell_{g'}$ such that $\tilde{D}$ is realizable as an isometry of the surface $S_{g(\tilde{D})}$ with the hyperbolic structure $\G(\l D ,g' \r , (\ell_1, \ldots,\ell_{g'}))$.
\end{theorem}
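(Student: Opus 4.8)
The plan is to obtain this statement as a synthesis of the three preceding results: the structural decomposition of Theorem~\ref{lem:sph_act_to_g}, the polygonal realization of Theorem~\ref{main}, and the geometric self-compatibility construction of Remark~\ref{rem:l5_geom}. First I would apply Theorem~\ref{lem:sph_act_to_g} to the given Type 1 action $\tilde{D}$, which has $g_0(\tilde{D}) = g' > 0$, to write $\tilde{D} = \l D, g'\r$ for a Type 1 action $D$ with $g_0(D) = 0$ and the same fixed-point data. This reduces the problem to (a) realizing the irreducible action $D$ by a hyperbolic structure, and (b) performing the trivial self-compatibility $g'$ times so as to respect that structure.

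For step (a), Theorem~\ref{main} realizes $D$ as the rotation by $\theta_D$ of the polygon $\p_D$, and Remark~\ref{rem:poly} upgrades $\p_D$ to a genuine hyperbolic polygon, giving a hyperbolic structure $\G(D)$ on $S_{g(D)}$ under which $D$ acts by isometries. For step (b), I would iterate the construction of Remark~\ref{rem:l5_geom}: at the $i$-th stage, choose a pair of distinct size-$n$ orbits of the current action, excise a $C_n$-equivariant family of pairwise disjoint hyperbolic disks of a common area $a_i$ around their points, and isometrically identify the resulting boundary circles, which all share the common length $\ell_i = \ell(a_i)$. By Remark~\ref{rem:triv_self_comp} the underlying topological operation is exactly one application of $\l \cdot \r$, raising $g_0$ by one and the genus by $n$; after $g'$ iterations the structure produced is by definition $\G(\l D, g'\r, (\ell_1, \ldots, \ell_{g'}))$, realizing the action $\l D, g'\r = \tilde{D}$. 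A Riemann--Hurwitz or Euler-characteristic count confirms $g(\tilde{D}) = g(D) + g'n$, so the resulting surface is indeed $S_{g(\tilde{D})}$.

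The hard part, and the only place real verification is needed, is checking that each gluing can be performed isometrically and $C_n$-equivariantly for some positive choice of the parameters $\ell_i$. Three points must be confirmed at every stage: that a pair of size-$n$ orbits remains available (which holds because generic points of $\p_D$, and of the pieces attached at earlier stages, have trivial stabilizer), that equivariant disks of a common area can be chosen mutually disjoint, and that disks of area $a_i$ produce boundary circles of equal length so that the identification is a hyperbolic isometry. Since $\ell(a)$ is a continuous function taking all positive values as $a$ varies, any prescribed $\ell_i > 0$ is realizable, and the cyclic symmetry makes equivariance automatic; hence the induction goes through and yields the asserted structure $\G(\l D, g'\r, (\ell_1, \ldots, \ell_{g'}))$.
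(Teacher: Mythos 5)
Your proposal follows exactly the paper's own route: reduce via Theorem~\ref{lem:sph_act_to_g} to an irreducible Type 1 action $D$ with $g_0(D)=0$, realize it hyperbolically by Theorem~\ref{main} (with Remark~\ref{rem:poly}), and then iterate the trivial self-compatibility of Remarks~\ref{rem:triv_self_comp} and~\ref{rem:l5_geom} $g'$ times to build $\G(\l D, g'\r, (\ell_1,\ldots,\ell_{g'}))$. The paper presents this as an immediate synthesis of the preceding results, and your additional checks (availability of size-$n$ orbits, equivariant disjoint disks, matching boundary lengths) only make the same argument slightly more careful.
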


\subsection{Type 2 actions}

We motivate the realization of Type 2 actions with an interesting example. 

\begin{example}\label{eg:type-3}
Consider the $C_6$-action 
$$D= (6,0;(1,2),(1,2),(1,3),(2,3))$$ on $S_2$.
Observe that $D$ has no cone point of order $6$, and hence $D$ is a Type 2 action. However, we can realize the action $D$ in the following manner. Consider the Type 1 $C_6$-actions $$(6,0;(1,2),(1,3),(1,6))\text{ and } (6,0;(1,2),(2,3),(5,6))$$ on $S_1$. The local rotational angles induced by these actions around their unique fixed points are $2\pi/6$ and $10 \pi/6$, respectively, which add up to $0 \pmod{2\pi}$. Now consider these actions on two distinct copies of $S_1$, remove invariant disks around the fixed point on each copy of $S_1$, and then attach an annulus across the resultant boundary circles, as illustrated in Figure~\ref{fig:3}, thereby realizing the action $D$.

\begin{figure}[H]
\labellist
\small
\pinlabel $2\pi/6$ at 49 21
\pinlabel $10\pi/6$ at 99 21
\pinlabel $D_1\curvearrowright$ at 1 39
\pinlabel $\curvearrowleft D_2$ at 147 39
\endlabellist
\centering
\includegraphics[width = 55 ex]{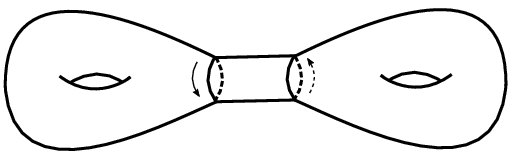}
\caption{A realization of the action $D$ on $S_2$.}
\label{fig:3}
\end{figure}
\end{example}

\noindent This realization of $D$ was made possible by the fact that the rotational angles around the fixed points of the actions added up to $0$ modulo $2\pi$. Taking inspiration from this idea, we now define the notion of a compatible pair of Type 1 actions, that is, a pair of Type 1 actions that will allow such a construction. 

\begin{definition}\label{def:comp_pair}
For $i = 1,2$, two actions
$$D_{i}=(n, g_{i,0}; (c_{i,1} , n_{i,1} ),(c_{i,2},n_{i,2}),\ldots,(c_{i,\ell_i},n_{i,\ell_i}))$$
are said to form an $(r,s)$-\textit{compatible pair} $(D_1,D_2)$ if there exists $1 \leq r \leq \ell_1$ and $ 1 \leq s \leq \ell_2$ such that 
\begin{enumerate}[(i)]
\item $n_{1,r} = n_{2,s} = m$, and
\item $c_{1,r}+c_{2,s} \equiv 0 \pmod{m}$. 
\end{enumerate}
The number $1+g(D) - g(D_1) - g(D_2)$ will be denoted by $A(D_1,D_2).$
\end{definition}

In the following lemma, we give a combinatorial recipe for constructing a Type 2 action from compatible pair of Type 1 actions. The proof of the lemma follows immediately from the definition of a data set.

\begin{lemma} \label{lem:comp_pair}
Given an $(r,s)$-compatible pair $(D_1,D_2)$ of actions, we obtain an action 
\begin{gather*}
 D=(n,g_{1,0}+g_{2,0};(c_{1,1},n_{1,1}),\dots,\widehat{(c_{1,r},n_{1,r})}, \ldots, (c_{1,\ell_1},n_{1,\ell_1}),-\\ 
 (c_{2,1},n_{2,1}),\dots,\widehat{(c_{2,s},n_{2,s})}, \ldots, (c_{2,\ell_2},n_{2,\ell_2})
\end{gather*}
such that $$A(D_1,D_2) = \frac{n}{n_{1,r}}.$$
\end{lemma}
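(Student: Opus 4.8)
The plan is to verify directly that the tuple $D$ obtained from the compatible pair $(D_1,D_2)$ satisfies the four conditions of Definition~\ref{defn:data_set}, and then to compute $A(D_1,D_2)$ via the Riemann–Hurwitz equation. Since the lemma asserts that $D$ is a bona fide data set and that $A(D_1,D_2) = n/n_{1,r}$, the entire proof reduces to bookkeeping: the construction simply deletes the two matched cone points $(c_{1,r},n_{1,r})$ and $(c_{2,s},n_{2,s})$ from the concatenated list of cone-point data and declares the quotient genus to be $g_{1,0}+g_{2,0}$.

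First I would check conditions (ii) and (iii), which are immediate: every surviving $n_{i,j}$ still divides $n$, and every surviving pair still satisfies $\gcd(c_{i,j},n_{i,j})=1$, since these properties are inherited unchanged from $D_1$ and $D_2$. Condition (i) is vacuous here because $\ell \geq 1$ (indeed the combined list is nonempty), so $r(D)=0$ and there is nothing to verify. The only genuinely substantive point is condition (iv), the sum relation $\sum_j (n/n_j)c_j \equiv 0 \pmod n$. Here I would start from the two relations $\sum_{j} (n/n_{1,j})c_{1,j} \equiv 0$ and $\sum_{j}(n/n_{2,j})c_{2,j}\equiv 0 \pmod n$ coming from the fact that $D_1,D_2$ are themselves data sets, add them, and then subtract the contributions of the two deleted pairs. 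Since $n_{1,r}=n_{2,s}=m$, the deleted terms contribute $(n/m)c_{1,r} + (n/m)c_{2,s} = (n/m)(c_{1,r}+c_{2,s})$, which is $\equiv 0 \pmod n$ precisely because the compatibility condition (ii) of Definition~\ref{def:comp_pair} gives $c_{1,r}+c_{2,s}\equiv 0 \pmod m$. Thus the sum for $D$ remains $\equiv 0 \pmod n$, establishing (iv).

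The computation of $A(D_1,D_2)$ is then a matter of comparing Riemann–Hurwitz equations. Writing the \eqnref{eqn:riemann_hurwitz} relation for $D$, for $D_1$, and for $D_2$, and using $g_0(D)=g_{1,0}+g_{2,0}$ together with the fact that the cone-point list of $D$ is the union of those of $D_1$ and $D_2$ minus the two matched pairs (each of which contributes a term $1/m - 1$), I expect the genus $g(D)$ to satisfy
$$\frac{2-2g(D)}{n} = \frac{2-2g(D_1)}{n} + \frac{2-2g(D_2)}{n} - 2 + 2\left(1 - \frac{1}{m}\right),$$
where the $-2$ accounts for the doubled constant term $2-2g_0$ when summing, and the final term removes the two deleted cone-point contributions. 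Solving for $1 + g(D) - g(D_1) - g(D_2)$ should collapse to $n/m = n/n_{1,r}$, which is exactly the claimed value of $A(D_1,D_2)$.

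I do not anticipate any serious obstacle: the only step requiring care is getting the constant bookkeeping right in the Riemann–Hurwitz comparison, since summing the two equations double-counts the ambient Euler characteristic contribution, and one must correctly account for the fact that deletion of a matched pair of cone points followed by annulus attachment increases the genus rather than decreasing the Euler characteristic naively. Geometrically this is the content of Remark~\ref{rem:l5_top}: attaching $k = n/m$ annuli across the boundary components left by the removed invariant disks raises the genus by $k$, and it is this $k$ that reappears as $A(D_1,D_2) = n/n_{1,r}$. I would keep the geometric picture in mind as a consistency check on the algebra, but the formal proof is the direct verification just described, which is why the authors note it follows immediately from the definition of a data set.
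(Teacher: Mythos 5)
Your proposal is correct and is exactly the verification the paper has in mind: the authors state only that the lemma ``follows immediately from the definition of a data set,'' and your direct check of conditions (i)--(iv) (with condition (iv) reducing to $c_{1,r}+c_{2,s}\equiv 0 \pmod{m}$) together with the Riemann--Hurwitz comparison yielding $1+g(D)-g(D_1)-g(D_2)=n/n_{1,r}$ is precisely that omitted bookkeeping, carried out correctly.
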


 \noindent  We denote action $D$ in Lemma~\ref{lem:comp_pair} obtained from an $(r,s)$-compatible pair $(D_1,D_2)$ by $\lp D_1,D_2,(r,s) \rp$. The geometric structure that realizes a Type 2 action $D= \lp D_1,D_2,(r,s) \rp$, where the $D_i$ are Type 1 actions (as Lemma~\ref{lem:comp_pair} indicates) will depend on the individual structures $\G(D_i)$, and the compatibility. We make this more explicit in the following remark. 
 
 \begin{remark}\label{rem:geom_comp_pair}
Let $D$ be a Type 2 action that can be realized as a $(r,s)$-compatible pair $(D_1,D_2)$ of Type 1 actions as in Lemma~\ref{lem:comp_pair}. On each $S_{g(D_i)}$, the $D_i$ will induce an orbit $O_i$ of size $b = n/n_{1,r}$. Furthermore, $D_1$ and $D_2$ induce local rotational angles $\theta_1 = 2 \pi c_{1,r}^{-1}/n_{1,r}$ and $ \theta_2 = 2 \pi c_{2,s}^{-1}/n_{2,s}$ around the points in $O_1$ and $O_2$ respectively. For $1 \leq j \leq b$, we choose mutually disjoint disks $B_{i,j}$ around each point in the $O_i$ that are cyclically permuted by the $D_i$, ensuring that the $b$ pairs $B_{1,j},B_{2,j}$ of invariant disks have the same area $a$. We remove these disks on each $S_{g(D_i)}$ yielding the surfaces $\Gamma_i \approx S_{g(D_i),b}$, all of whose boundary components have the same length $\ell = \ell(a)$. On each $\Gamma_i$, there is a natural hyperbolic structure $\G_i$ induced by the structures $\G(D_i)$. Since condition $(ii)$ of Definition~\ref{def:comp_pair} on the pair $(D_1,D_2)$ ensures that $\theta_1+\theta_2 \equiv 0 \pmod{2\pi},$ we can identify the $b$ pairs $\partial B_{1,j}, \partial B_{2,j}$ of boundary circles, thereby extending the actions $D_i$ on the $S_{g(D_i)}$ to the action $D$ on $S_{g(D)}$. Note that this identification also defines an hyperbolic structure (that we denote by) $\G(\lp D_1,D_2, (r,s)\rp,\ell)$ on $S_{g(D)}$ realizing $D$, that is completely determined by the structures $\G(D_i)$, the number $A(D_1,D_2)$, and the parameter $\ell$.  
\end{remark}

\noindent The discussion in Remark~\ref{rem:geom_comp_pair} leads to the following theorem. 
\begin{theorem}\label{thm:comp_pair}
Let $D$ be a Type 2 action that can be realized as an $(r,s)$-compatible pair $(D_1,D_2)$ of Type 1 actions as in Lemma~\ref{lem:comp_pair}. Then there exists $\ell > 0$ such that $D$ can be realized as an isometry of the hyperbolic structure $\G( \lp D_1,D_2, (r,s)\rp,\ell)$ on $S_{g(D)}$.
\end{theorem}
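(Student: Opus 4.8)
The plan is to run the surgery described in \remref{rem:geom_comp_pair} and verify that it genuinely produces a smooth hyperbolic surface of the correct genus carrying $D$ as an isometry. First, since each $D_i$ is a Type~1 action, I would invoke \thmref{thm:real_type2} (and \thmref{main} in the irreducible case) to fix an explicit hyperbolic structure $\G(D_i)$ on $S_{g(D_i)}$ realizing $D_i$ as the rotation of the polygon $\p_{D_i}$. The distinguished cone point of order $m = n_{1,r} = n_{2,s}$ of $\mathcal{O}_{D_i}$ lifts to an orbit $O_i \subset S_{g(D_i)}$ of size $b = n/m$, and around each point of $O_i$ the action $D_i$ induces the local rotation $\theta_i = 2\pi c_i^{-1}/m$ (writing $c_1 = c_{1,r}$, $c_2 = c_{2,s}$). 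Since $|O_1| = |O_2| = b$, the orbit points can be paired as $O_i = \{p_{i,1},\dots,p_{i,b}\}$ with $D_i$ cyclically permuting the second index.

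The crux of the argument is the angle computation. Condition~(ii) of \defref{def:comp_pair} gives $c_{1,r} + c_{2,s} \equiv 0 \pmod m$, whence $c_{2,s}^{-1} \equiv -c_{1,r}^{-1}$ and therefore $c_{1,r}^{-1} + c_{2,s}^{-1} \equiv 0 \pmod m$. Consequently
\[
\theta_1 + \theta_2 = \frac{2\pi}{m}\left(c_{1,r}^{-1} + c_{2,s}^{-1}\right) \equiv 0 \pmod{2\pi}.
\]
This is exactly the hypothesis needed in \remref{rem:l5_top} and \remref{rem:geom_comp_pair}, and it is what makes the two local rotations match with opposite sense across a common boundary circle.

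Next I would carry out the equivariant surgery. For each $i$, choose an invariant family of pairwise disjoint metric disks $B_{i,j}$ centered at $p_{i,j}$ and cyclically permuted by $D_i$; in the hyperbolic plane a metric disk is determined up to isometry by its area, and its boundary length is a strictly increasing function of that area, so prescribing a common area $a$ forces all $2b$ circles $\partial B_{i,j}$ to share a common length $\ell = \ell(a)$. Removing these disks yields $\Gamma_i \approx S_{g(D_i),b}$ with the induced structure, on which $D_i$ still acts isometrically. I would then identify $\partial B_{1,j}$ with $\partial B_{2,j}$ by orientation-reversing isometries chosen to intertwine the index-shift actions of $D_1$ and $D_2$; the equality $\theta_1 \equiv -\theta_2 \pmod{2\pi}$ guarantees that the two boundary rotations agree on each seam, so the $D_i$ fit together into a single isometry $D$ of the glued surface. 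A Riemann--Hurwitz (equivalently, Euler-characteristic) count together with $A(D_1,D_2) = b = n/m$ from \lemref{lem:comp_pair} shows the glued surface is $S_{g(D)}$ with $g(D) = g(D_1)+g(D_2)+b-1$, and the fixed-point data of the resulting action is precisely that of $\lp D_1,D_2,(r,s)\rp$. Taking $\ell$ to be the common boundary length then exhibits $D$ as an isometry of $\G(\lp D_1,D_2,(r,s)\rp,\ell)$.

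The step demanding the most care is verifying that the gluing yields an honest smooth hyperbolic metric on $S_{g(D)}$ rather than one with curvature concentrated along the $b$ seam circles. Because the boundary of a metric disk curves toward its center, directly identifying two such circles does not by itself produce a smooth hyperbolic metric; as in the self-compatible construction of \remref{rem:l5_top}, one accommodates this by interposing a compatible hyperbolic annulus of core length $\ell$ across each pair of boundary components (equivalently, by first deforming each $\Gamma_i$ to have geodesic boundary of length $\ell$). I would check that this can be done $D$-equivariantly and simultaneously for all $b$ seams, and that $\ell$ ranges over a nonempty interval as $a$ varies subject to the disjointness constraints on each $S_{g(D_i)}$, yielding the claimed one-parameter family. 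I expect this smoothing-across-the-seam step---reconciling the two isometric actions into a single isometry of a smooth hyperbolic structure---to be the principal technical obstacle, with the equal-length condition on the paired boundary circles being exactly what makes the equivariant annular gluing possible.
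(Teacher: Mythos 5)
Your proposal is correct and follows essentially the same route as the paper, which proves this theorem by direct appeal to the construction in Remark~\ref{rem:geom_comp_pair}: realize each $D_i$ via $\G(D_i)$, excise equal-area invariant disks around the compatible orbits, and identify the resulting equal-length boundary circles equivariantly, the angle condition $\theta_1+\theta_2\equiv 0\pmod{2\pi}$ following from condition (ii) of Definition~\ref{def:comp_pair} exactly as you compute. Your added care about smoothing the metric across the seams (the paper identifies the metric circles directly and leaves this point implicit) is a worthwhile refinement but does not change the argument.
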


\begin{remark}\label{rem:triv_comp_pair}
In the context of $(r,s)$-compatible pairs of actions, there exists a natural analog to the self compatibility that we have seen in Remark~\ref{rem:triv_self_comp}. As expected, this compatibility also occurs across a pair of full-sized orbits under the actions. For $i = 1,2$, consider actions 
$$D_{i}=(n, g_{i,0}; (c_{i,1} , n_{i,1} ),(c_{i,2},n_{i,2}),\ldots,(c_{i,\ell_i},n_{i,\ell_i}))$$ of degree $n$. Choose distinguished orbits $O_i$ of $D_i$ of size $n$ on $S_{g(D_i)}$. For each $i$, we choose mutually disjoint disks $B_{i,j}$, for $1 \leq j \leq n$ having the same area $a$ around the points in $O_i$ that are cyclically permuted by the $D_i$. We remove these disks yielding the surfaces $\Gamma_i = S_{g(D_i),n}$ all of whose boundary components have the same length $\ell$. As there is no local rotation induced around each component of $\partial \Gamma_i$, we identify the $n$ pairs $B_{1,j},B_{2,j}$ of boundary circles. This process yields a $C_n$-action 
\begin{gather*}
 D = \lp D_1,D_2\rp :=(n,g_{1,0}+g_{2,0};(c_{1,1},n_{1,1}),\ldots, (c_{1,\ell_1},n_{1,\ell_1}),-\\ 
 (c_{2,1},n_{2,1}), \ldots, (c_{2,\ell_2},n_{2,\ell_2}))
\end{gather*}
with $g(D) = g(D_1)+g(D_2) + n-1$, and a hyperbolic structure (we denote by) $\G (\lp D_1,D_2 \rp, \ell)$ on $S_{g(D)}$ that is completely determined by the structures $\G(D_i)$ and parameter $\ell$. 
\end{remark}

\noindent In order to obtain the realizations of arbitrary Type 2 actions, we need to understand the realizations of irreducible Type 2 actions. The realizations of such actions is rather subtle, which we address through the following technical lemma. 

\begin{lemma}\label{lem:type3_irr}
Let $D$ be an irreducible Type 2 action. Then $\l D ,1 \r$ can be built topologically from 
three pairwise compatible irreducible Type 1 actions.
\end{lemma}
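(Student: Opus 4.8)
<br />

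The plan is to analyze an irreducible Type 2 action $D$ through its quotient orbifold and reconstruct it by successive compatibilities. Since $D$ is irreducible, by Gilman's criterion $\mathcal{O}_D$ is a sphere with three cone points, say of orders $n_1, n_2, n_3$, none of which equals $n$ (otherwise $D$ would be Type 1). First I would write $D = (n, 0; (c_1,n_1),(c_2,n_2),(c_3,n_3))$ and apply a single trivial self-compatibility to form $\l D, 1\r$, which by Remark~\ref{rem:triv_self_comp} has data set $(n,1;(c_1,n_1),(c_2,n_2),(c_3,n_3))$ with $g(\l D,1\r) = g(D)+n$. The goal is to exhibit this genus-one-quotient action as the result of gluing together three irreducible Type 1 actions along compatible full-sized orbits.

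The key construction is to introduce auxiliary fixed-point data that ``splits'' the genus-one orbifold. The strategy I would pursue is to choose three irreducible Type 1 actions
$$E_i = (n, 0; (c_i, n_i), (a_i, m_i), (b_i, n)), \quad i = 1,2,3,$$
each carrying one of the original cone points $(c_i, n_i)$ together with an auxiliary cone point $(a_i, m_i)$ and a distinguished cone point of full order $n$. The auxiliary data must be chosen so that the three actions are \emph{pairwise compatible}: the distinguished order-$n$ fixed point of $E_i$ should pair with the auxiliary order-$n$ cone point structure of $E_{i+1}$ (indices mod $3$), which forces the local angle conditions $b_i + (\text{partner data}) \equiv 0$ from Definition~\ref{def:comp_pair}. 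Concretely, each $E_i$ contributes an order-$n$ orbit of size one (the full fixed point) and this must be glued to a size-$n/m_{i+1} \cdot \ldots$ orbit of the neighbor; arranging the $m_i$ and the residues $a_i, b_i$ to satisfy both the side-pairing divisibility $n_i \mid n$ and the angle-cancellation condition is the heart of the argument.

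The main technical step is the bookkeeping with the Riemann--Hurwitz equation and condition (iv) of Definition~\ref{defn:data_set}. After performing two compatibilities to glue $E_1, E_2, E_3$ in a cyclic chain, I would verify that the three auxiliary cone points cancel pairwise (leaving only the original $(c_i, n_i)$), that the genera add correctly via Lemma~\ref{lem:comp_pair} applied twice, and that the resulting quotient genus is exactly $1$. The cyclic (rather than linear) arrangement of three actions is what raises the quotient genus from $0$ to $1$: two gluings along a tree would keep $g_0 = 0$, but closing the triangle with the third compatibility adds the extra handle to the orbifold, matching $g_0(\l D,1\r) = 1$. Checking the Euler-characteristic and fixed-point totals is the decisive consistency check.

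The hard part will be producing the auxiliary residues $(a_i, m_i)$ and $(b_i, n)$ so that all three compatibility conditions of Definition~\ref{def:comp_pair} hold \emph{simultaneously} while each $E_i$ remains a valid irreducible Type 1 data set; in particular condition (iv), $\sum \frac{n}{n_j} c_j \equiv 0 \pmod n$, must hold separately for each $E_i$ and must be compatible with the global cancellation of the auxiliary points around the cyclic chain. I anticipate that the existence of such residues reduces to solving a small system of congruences modulo $n$ whose solvability follows from $\gcd(c_i, n_i) = 1$ together with the original relation $\sum \frac{n}{n_i} c_i \equiv 0 \pmod n$ for $D$; verifying this solvability, rather than the topological gluing itself, is where the real work lies.
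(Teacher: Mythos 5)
Your proposal follows essentially the same route as the paper: split $\l D,1\r$ into three irreducible Type 1 actions, each carrying one original cone point plus auxiliary cone points of which one has full order $n$, chain them together by two compatible-pair gluings, and close the cycle with a self-compatibility, which is exactly what raises the orbifold genus to $1$. The only difference is cosmetic (the paper lets $D_1$ and $D_2$ share an auxiliary cone point of order $n_3$ rather than gluing all three links along order-$n$ fixed points), and both you and the paper leave the solvability of the resulting system of congruences at the level of an asserted ``simple number-theoretic argument.''
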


\begin{proof}
Let $$D=(n, 0; (c_1,n_1), (c_2,n_2), (c_3,n_3)),$$ where $n_i\neq n$, for any $i$, be a Type 2 action. Without loss of generality, we may assume that $n/n_1,n/n_2$ are odd and $n/n_3$ is even. By a simple number-theoretic argument, we can see that for $i = 1,2,3$, the tuple
\begin{gather*}
D_i=(n, 0; (c_{i,1} , n_{i,1} ),(c_{i,2},n_{i,2}),(c_{i,3}, n_{i,3})),\text{ where } \\
n_{i,3}=n_{3,2}=n, \, (c_{i,1}, n_{i,1})=(c_i, n_i),\text{ and }n_{2,2}=n_{1,2}=n_3, 
\end{gather*}
is a Type 2 data set. Moreover, we can choose our $c_{i,j}$'s in order to ensure that $(D_1,D_2)$ forms a $(2,2)$-compatible pair, and $(D_2,D_3)$ forms a $(3,3)$-compatible pair. These compatibilities yield a $(4,5)$-self compatible Type 2 data set 
$$\tilde{D}=(n, 0, (c_1,n_1), (c_2,n_2), (c_3,n_3), (c_4,n_4), (c_5, n_5)),$$ with $g(\tilde{D}) = g(D)+n-1$.
Finally, we apply Lemma~\ref{lem:self_comp_ds} to obtain a topological realization of the action
$$(n, 1; (c_1,n_1), (c_2,n_2), (c_3,n_3)),$$ from which the result follows.
\end{proof}

\noindent We have now developed the requisite tools to describe the realizations of arbitrary Type 2 actions. 

\begin{theorem}\label{thm:arb_real}
For $g \geq 2$, a Type 2 action on $S_g$ can be constructed from finitely many compatibilities of the following types involving Type 1 actions:
\begin{enumerate}[(i)]
\item $\l D, (r,s)\r$,
\item $\l D \r$,
\item $\lp (D_1,D_2), (r,s)\rp$, and
\item $\lp (D_1,D_2) \rp$.
\end{enumerate}
\end{theorem}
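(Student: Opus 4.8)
The goal is to show that any Type~2 action $D$ on $S_g$ can be built from finitely many compatibilities of the four types listed, all ultimately involving Type~1 actions. The overarching strategy is an \emph{induction on complexity}, where the complexity of $D$ is measured by the data $(g_0(D), \ell)$ — the genus of the quotient orbifold together with the number of cone points. The base of the reduction is the irreducible case ($g_0(D)=0$, $\ell=3$), which is precisely handled by \lemref{lem:type3_irr}: there we already know that after one trivial self-compatibility, $\l D,1\r$ is assembled from three pairwise compatible irreducible Type~1 actions via two compatible pairs and one self-compatibility. Since the constructions in Remarks~\ref{rem:triv_self_comp} and~\ref{rem:triv_comp_pair} are reversible, recovering $D$ itself from $\l D,1\r$ poses no obstruction, so the irreducible case is settled.

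For the inductive step I would invoke the result of Gilman~\cite{JG3} cited in the introduction: a cyclic action is irreducible if and only if its quotient orbifold is a sphere with exactly three cone points. Thus any Type~2 action $D$ that is \emph{not} irreducible has a nontrivial maximal reduction system $\C$, and cutting $S_g$ along $\C$ and capping the resulting boundary components produces a (possibly disconnected) surface on which $D$ induces a finite-order map $\tilde{D}$ of strictly lower complexity. The plan is to analyze the orbits of the components of $\overline{S_g\setminus\C}$ under $\langle D\rangle$. Each such orbit, after capping, carries either an irreducible action or a lower-complexity action, and the annuli of $\C$ reappear as the identifications prescribed by the four compatibility operations. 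Concretely: if a reducing curve separates into two invariant pieces one recovers a compatible pair $\lp(D_1,D_2),(r,s)\rp$ or its full-orbit analog $\lp(D_1,D_2)\rp$; if a family of curves is permuted nontrivially within a single invariant piece one recovers a self-compatibility $\l D,(r,s)\r$ or $\l D\r$. Matching the rotation-angle condition $\theta_1+\theta_2\equiv 0\pmod{2\pi}$ at each gluing is exactly guaranteed by the invariance of $\C$, since reducing curves are permuted without reversal of the induced co-orientation.

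The main obstacle, and the step demanding the most care, is the bookkeeping that confirms the inductive reduction actually \emph{decreases} the complexity measure and that the four operations suffice to capture every way in which $\C$ can sit inside $S_g$. One must check that each component of $\overline{S_g\setminus\C}$, once capped, has either a smaller $g_0$ or fewer cone points than $D$, so that the induction is well-founded; this is where a Riemann--Hurwitz accounting across the pieces (using Equation~\ref{eqn:riemann_hurwitz}) is needed to rule out degenerate situations. In particular, when the capped pieces are themselves Type~2 rather than Type~1 or irreducible, one applies the inductive hypothesis to each piece, so the finiteness of the whole decomposition follows from the finiteness of $|\C|$ together with the strict decrease of complexity at each stage. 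Assembling the global structure is then a finite iteration of the operations~(i)--(iv), and reversing the disk-removal-and-gluing constructions of Remarks~\ref{rem:l5_geom},~\ref{rem:triv_self_comp}, and~\ref{rem:triv_comp_pair} upgrades this purely topological decomposition to the hyperbolic realization, completing the argument.
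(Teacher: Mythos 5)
Your proposal is correct and follows essentially the same route as the paper: both invoke Gilman's result to obtain a maximal reduction system, cut along it and cap, observe that the invariant non-spherical components carry irreducible actions (with irreducible Type 2 pieces handled via Lemma~\ref{lem:type3_irr} after one trivial self-compatibility), identify the annuli with the four compatibility operations, and finish by induction. Your explicit complexity measure $(g_0(D),\ell)$ and the remark on why the angle condition holds at each gluing only make precise what the paper leaves implicit in its phrase ``applying this approach inductively.''
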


\begin{proof}
For a Type 2 action $D$ of degree $n$ on $S_g$, it follows from the work of J. Gilman~\cite{JG3} that $h$ has a nonempty reduction system $\C$. We may assume, without loss of generality, that $\C$ is maximal. Let $S_g(\C)$ denote the surface obtained from $S_g$ by removing a closed annular neighborhood $N$ of $\C$ and then capping $\overline{S_g\setminus N}$. Note that every nonseparating $c \in \C$ yields $2$ distinguished marked points on the same component of $S_g(\C)$, while a separating curve $c' \in \C$ yields two distinguished marked points on two distinct components of $S_g(\C)$. Clearly, $h$ induces a $C_n$-action $\tilde{h}$ on $S_g(\C)$. If every component of $S_g(\C)$ is the $2$-sphere $S_0$, then it is apparent that $h$ is a rotational action, which contradicts our assumption. Hence, $S_g(\C)$ has at least one component $S_{g'}$, for some $0 < g' \leq g$, and every component $F$ of this form satisfies $\tilde{h}(F) =F$ with $\tilde{h}\vert_{F} \in \Mod(F)$ being irreducible. Moreover, there exists no spherical component $F'$ of $S_g(\C)$ such that $(\tilde{h}\vert_{F'})(F') = F'$

Suppose that $S_g(\C)$ has only one component, which is not a sphere. Then it is follows from~\cite{JG3} that $D$ has to be an irreducible Type 2 action. By Theorem~\ref{lem:sph_act_to_g}, it follows that there exits an action of type $\tilde{D} = \l D,1\r$ with $g_0(\tilde{D}) = 1$. We now appeal to Lemma~\ref{lem:type3_irr} to realize $\tilde{D}$ as a combination of two $(r,s)$-compatibilities, and one self-compatibility. If $S_g(\C)$ has exactly two nonspherical components say $F_i$ so that $\tilde{h} \vert_{F_i} = D_i$, for $i = 1,2$. When $S_g(\C)$ has no spherical components, $h$ is realized either by a structure of type $\lp (D_1,D_2), (r,s) \rp$ or $\lp (D_1,D_2) \rp$. On the other hand, if $S_g(\C)$ has a nontrival spherical orbit, then this orbit should have come from a $\l D , g'\r$-type compatibility (with $D = D_1 \text{ or } D_2$) for some $g' > 0$. Applying this approach inductively, we can realize all Type 2 actions. 
\end{proof}

\noindent The following corollary is a direct consequence of Theorem~\ref{thm:arb_real}, and  Remarks~\ref{rem:l5_geom},~\ref{rem:geom_comp_pair}, and~\ref{rem:triv_comp_pair}.

\begin{corollary}\label{cor:arb_real}
For $g \geq 2$, the hyperbolic structure that realizes an arbitrary Type 2 action on $S_g$ as an isometry can be constructed from finitely many structures of the following types:
\begin{enumerate}[(i)]
\item $\G(\l D, (r,s)\r, \ell)$, 
\item $\G(\l D \r, \ell)$,
\item $\G(\lp (D_1,D_2), (r,s)\rp, \ell)$, and
\item $\G(\lp (D_1,D_2) \rp,\ell)$,
\end{enumerate}
\end{corollary}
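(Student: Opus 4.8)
The plan is to note that Corollary~\ref{cor:arb_real} is essentially the geometric companion of Theorem~\ref{thm:arb_real}: the latter already guarantees that every Type~2 action $D$ is built from finitely many \emph{topological} compatibilities of the four listed types, and what remains is to verify that each such topological compatibility has already been upgraded to an explicit hyperbolic structure in the relevant remark. So the skeleton of the argument is a clean bookkeeping induction rather than any new geometric input. First I would invoke Theorem~\ref{thm:arb_real} to obtain a finite decomposition of $D$ into compatibilities of types $\l D,(r,s)\r$, $\l D\r$, $\lp(D_1,D_2),(r,s)\rp$, and $\lp(D_1,D_2)\rp$, together with the base-case Type~1 actions $D$ (or $D_1,D_2$) occurring at the leaves of the decomposition.

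The second step is to attach hyperbolic structures to the leaves. By Theorem~\ref{thm:real_type2} (together with Theorem~\ref{main} when $g_0=0$), every Type~1 action admits an explicit structure of the form $\G(\l D, g'\r, (\ell_1,\ldots,\ell_{g'}))$, and in particular a well-defined $\G(D)$ on $S_{g(D)}$ realizing it. This furnishes the initial geometric data for each irreducible or Type~1 building block produced by the topological decomposition.

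The third and central step is to propagate these structures up the decomposition tree, matching each topological operation with its geometric realization. Concretely, I would argue that whenever $D$ is obtained from a self compatibility $\l D', (r,s)\r$, Remark~\ref{rem:l5_geom} produces the structure $\G(\l D',(r,s)\r,\ell)$ for a suitable $\ell>0$; a trivial self compatibility $\l D'\r$ is likewise handled by the $\G(\l D'\r,\ell)$ construction of Remark~\ref{rem:l5_geom}; a compatible pair $\lp(D_1,D_2),(r,s)\rp$ is realized by $\G(\lp(D_1,D_2),(r,s)\rp,\ell)$ via Theorem~\ref{thm:comp_pair} and Remark~\ref{rem:geom_comp_pair}; and the trivial compatible pair $\lp(D_1,D_2)\rp$ is realized by $\G(\lp(D_1,D_2)\rp,\ell)$ from Remark~\ref{rem:triv_comp_pair}. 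The key point is that in each remark the parameter $\ell$ is chosen (via the area-to-length function $\ell=\ell(a)$) precisely so that the boundary circles being glued share a common hyperbolic length and the local rotation angles around compatible orbits sum to $0\pmod{2\pi}$, so the identification extends the isometric actions across the seam. Since the decomposition of Theorem~\ref{thm:arb_real} is finite, finitely many applications of these four constructions assemble a hyperbolic structure on $S_g$ realizing $D$ as an isometry, which is exactly the assertion of the corollary.

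The step I expect to require the most care is the interface between the topological and geometric layers: one must check that the output of each topological operation in Theorem~\ref{thm:arb_real} is fed into the \emph{matching} geometric construction with mutually consistent gluing data, so that the lengths $\ell$ chosen at each stage remain compatible with structures already built at earlier stages. This is a consistency-of-parameters issue rather than a deep obstruction — each $\ell_i$ is an independent free parameter determined by a common boundary length, and the constructions in Remarks~\ref{rem:l5_geom},~\ref{rem:geom_comp_pair}, and~\ref{rem:triv_comp_pair} are explicitly stated to depend only on the constituent structures $\G(D_i)$ and these parameters — so once the matching is set up correctly the corollary follows immediately, with no further computation needed.
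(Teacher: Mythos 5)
Your proposal is correct and follows the paper's own (unwritten) argument exactly: the paper states the corollary as a direct consequence of Theorem~\ref{thm:arb_real} together with Remarks~\ref{rem:l5_geom},~\ref{rem:geom_comp_pair}, and~\ref{rem:triv_comp_pair}, which is precisely the decomposition-plus-geometric-upgrade bookkeeping you describe. Your added attention to the consistency of the length parameters $\ell$ at each gluing stage is a reasonable elaboration of a point the paper leaves implicit, but it introduces no new content.
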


\noindent We conclude this section with an example of a topological realization of an arbitrary Type 2 action whose quotient orbifold has more than 3 cone points. The geometric structure that realizes this action is implicit from the discussions that we have seen so far.
\begin{example}

Consider the $C_{30}$-action 
 $$D=(30, 1; (1,2), (1,6), (1,10), (7,30))$$ on $S_{49}$. We will construct this action from the following Type 1 actions, namely
\begin{gather*}
D_1=(30, 0; (11,15), (19,30), (19,30)), D_2=(30, 0; (1,6), (7,15), (11,30)), \\
D_3=(30, 0; (1,10), (8,15), (11,30)), \text{ and } D_4=(30, 0; (1,2), (4,15), (7,30)).
\end{gather*}
  Note that  $(D_1,D_2)$ forms a $(3,3)$-compatible pair, and $(D_2,D_3)$ forms a $(2,2)$-compatible pair. Using these compatibilities, By Lemma~\ref{lem:type3_irr}, we obtain the following $C_{30}$-action on $S_{41}$
 $$D_5 =(30, 1, (1,6), (1,10), (11,15)).$$ 
 Finally, observe that $(D_4,D_5)$ forms a $(2,3)$-compatible pair, thereby realizing the required action. 
\end{example}

\section{A combinatorial perspective using Fat Graphs}\label{sec:3}
A fat graph~\cite{BS1} is a graph equipped with a cyclic order on the set of edges incident at each vertex. If the valency of a vertex is less than three, then the cyclic order on the set of edges incident at that vertex is trivial. So, we consider the graphs with valency at each vertex is at least three. We denote a graph by a triple $G=(E, \sim, \sigma_1)$, where $E$ is a non-empty, finite set with even number of elements, $\sim$ is an equivalence relation on $E$ and $\sigma_1$ is a fixed-point free involution on $E$. It is straight forward to see that $G$ is equivalent to a standard graph. Each element of the set $E_1=E/\sigma_1$ is called an undirected edge. The set of vertices is $V=E/\sim$. If $v\in V$ is a vertex then the degree of $v$ is defined by $deg(v)= |v|$.

\begin{definition}
A \textit{fat graph} is a quadruple $G=(E, \sim, \sigma_1, \sigma_0)$, where
\begin{enumerate}[(i)] 
\item $(E, \sim, \sigma_1)$ is a graph, and 
\item $\sigma_0$ is a permutation on $E$ so that each cycle of $\sigma_0$ is a cyclic order on some $v\in V$.
\end{enumerate}
\end{definition}

\begin{definition}
A bijective map $f: (E, \sim, \sigma_1, \sigma_0) \to (E', \sim', \sigma_1', \sigma_0')$ is said to be a \textit{fat graph isomorphism} if 
\begin{enumerate}[(i)]
\item $x \sim y \iff f(x) \sim' f(y)$, for all $x,y \in E$, and
\item $f \circ \sigma_i = \sigma_i' \circ f, \text{ for } i = 0,1.$
\end{enumerate}
We denote the automorphism group of a fat graph $\Gamma$ by $\text{Aut}(\Gamma)$.
\end{definition}

\begin{example}
Consider the quadruple $G=(E, \sim, \sigma_1, \sigma_0)$ described in the following manner (see Figure~\ref{fig:12}). The set of directed edges is $E=\{\vec{e}_i, \cev{e}_i|\ i=1,2,3\}.$ The equivalence classes of $\sim$ are $v_1=\{\vec{e}_i|\ i=1,2,3\}$ and $v_2=\{\cev{e}_i|\ i=1,2,3\}.$ The fixed point free involution $\sigma_1$ is given by $\sigma_1 = (\vec{e}_1, \cev{e}_1)(\vec{e}_2, \cev{e}_2)(\vec{e}_3, \cev{e}_3)$ and $\sigma_0=(\vec{e}_1, \vec{e}_2, \vec{e}_3)(\cev{e}_1, \cev{e}_3, \cev{e}_2).$ The surface obtained by thickening the edges of the graph (as shown in Figure~\ref{fig:12}) is the sphere with three holes. Note that, if one considers  $\sigma'_0=(\vec{e}_1, \vec{e}_2, \vec{e}_3)(\cev{e}_1,\cev{e}_2, \cev{e}_3)$ instead of $\sigma_0$, then the associated surface is $S_{1,1}$.

\tikzset{->-/.style={decoration={
  markings,
  mark=at position .5 with {\arrow{>}}},postaction={decorate}}}
 
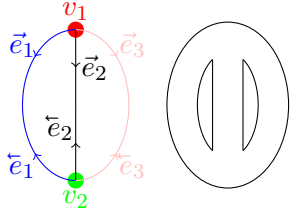
\begin{figure}[H]
\begin{center}
\begin{tikzpicture}
\begin{scope}[every node/.style={sloped,allow upside down}]

\draw [fill,red] (0,1) circle [radius=0.1]; \draw [fill,green] (0,-1) circle [radius=0.1]; \draw [->-] (0,1) -- (0, 0); \draw [->-] (0,-1) -- (0, 0);  \draw (0.25, 0.5) node {$\vec{e}_2$}; \draw (-0.2, -0.3) node {$ \cev{e}_2$};

\draw [->-,blue,domain=90:180] plot ({0.7*cos(\x)}, {sin(\x)}); \draw [->-, blue,domain=180:270] plot ({0.7*cos(90-\x)}, {sin(90-\x)});

\draw [blue] (-0.7, -0.8) node {$\cev{e}_1$}; \draw [pink] (0.75, -0.8) node {$\cev{e}_3$};\draw [blue] (-0.7, 0.8) node {$\vec{e}_1$}; \draw [pink] (0.75, 0.8) node {$\vec{e}_3$}; \draw [red] (0, 1) node [above] {$v_1$}; \draw [green] (0,-1) node [below] {$v_2$};

\draw [pink, ->-,domain=0:90] plot ({0.7*cos(90-\x)}, {sin(90-\x)}); \draw [->-, pink,domain=270:360] plot ({0.7*cos(\x)}, {sin(\x)});

\draw (2,0) ellipse (0.8cm and 1.1cm); \draw (1.8,-0.6) -- (1.8, 0.6); \draw (2.2,-0.6) -- (2.2, 0.6);   
\draw [domain=120:240] plot ({2+0.4*cos(\x)}, {0.7*sin(\x)});\draw [domain=300:420] plot ({2+0.4*cos(\x)}, {0.7*sin(\x)});

\end{scope}
\end{tikzpicture}
\caption{A fat graph with three boundary components.}\label{fig:12}
\end{center}
\end{figure}
\end{example}

\noindent A fat graph is called \textit{decorated} if the degree of each vertex is an even integer $\geq 4$. A cycle in a decorated fat graph is called a \textit{standard cycle} if every two consecutive edges are opposite to each other in the cyclic order on the edges incident at the their common vertex. Note that each edge uniquely determines a standard cycle. A decorated fat graph can be written as an edge-disjoint union of standard cycles. Let $\Sigma(G)$ be the surface with boundary associated with $G$. Then the number of boundary components in $\Sigma(G)$ is the same as the number of orbits of $\sigma_1\sigma_0^{-1}$ (see~\cite{BS1} for more details on this topic).

\subsection{Fat graph for a polygon with side-pairing} 
Given a polygon with a side-pairing, we can associate a fat graph with it. Let $$A= \{a_1, \dots, a_k, a_1^{-1}, \dots, a_k^{-1}\},$$ and $\p = \p_{2k}$ with a boundary word $W(\p)$ comprising letters from $A$. Then we define a fat graph $\Gamma(\p)=(E, \sim, \sigma_1, \sigma_0)$ associated with the polygon $\p$ as follows:
\begin{enumerate}[(i)]
\item $E=\{a_i, a_i^{-1}|\ i=1,2,\dots, k\}$.
\item Before describing the equivalence relation $\sim$, we describe $\sigma_1$. The fixed point free involution is defined by $\sigma_1(a_i)=a_i^{-1}$, and $\sigma_1(a_i^{-1})=a_i$, for $i=1,2,\dots, k.$
\item The equivalence classes of $\sim$ are defined in the following manner. Given $x_1\in E$, we define $x_2$ to be the edge of $\p$ immediately after $\sigma_1(x_1)$. In general, we define $x_{i+1}$ to be the edge of $\p$ immediately after $\sigma_1(x_i)$. Then the equivalence class of $x_1$ is $\{x_1, x_2, \dots x_n\}$, where $n$ is the smallest positive integer such that $x_{n+1}=x_1.$
\item If $v=\{x_1, x_2, \dots, x_n\}$ is a vertex such that $x_{i+1}$ is the edge of $\p$ immediately after $\sigma_1(x_i)$ for all $i$, then we define $c_v=(x_1, x_2,\dots, x_n)$. The fat graph structure $\sigma_0$ is given by $$\sigma_0=\prod\limits_{v\in V}c_v.$$ 
\end{enumerate}

\begin{example}
Consider the polygon $\p = \p_{10}$ (see Figure~\ref{fig:13}) labeled by the elements of $A\!=\!\{a,b,c,d,e, a^{-1}, b^{-1}, c^{-1}, d^{-1}, e^{-1}\}.$ Then the fat graph $\Gamma(\p)=(E, \sim, \sigma_1, \sigma_0)$ is described as follows.
\begin{enumerate}[(i)]
\item $E=\{a,b,c,d,e, a^{-1}, b^{-1}, c^{-1}, d^{-1}, e^{-1}\}$.
\item $\sigma_1(a)=a^{-1}, \, \sigma_1(b)=b^{-1}, \, \sigma_1(c)=c^{-1}, \, \sigma_1(d)=d^{-1}, \text{ and }\sigma_1(e)=e^{-1}$.
\item Let $x_1=a$, then $x_2=b^{-1}$ which is the edge of $\p$ next to $\sigma_1(a)=a^{-1}$. Similarly, we have, $x_3=c, \, x_4=d^{-1}, \, x_5=e$, and $x_6=a$. Therefore, the equivalence class containing $a$ is $v_1=\{x_1, x_2, x_3, x_4, x_5\}= \{a, b^{-1}, c, d^{-1}, e\}.$ The other equivalence class is $v_2=\{b, c^{-1}, d, e^{-1}, a^{-1}\}.$  Thus, $V=E/\!\!\sim = \{v_1, v_2\}$.
\item $\sigma_0=(a, b^{-1}, c, d^{-1}, e)(b, c^{-1}, d, e^{-1}, a^{-1})=(a, b^{-1}, c, d^{-1}, e)(a^{-1}, b, c^{-1}, d, e^{-1})$.
\end{enumerate}
\end{example}
\tikzset{->-/.style={decoration={
  markings,
  mark=at position .5 with {\arrow{>}}},postaction={decorate}}}
 
\begin{figure}[H]
\begin{center}
\begin{tikzpicture}
\begin{scope}[every node/.style={sloped,allow upside down}]

\draw [fill,green] (2,0) circle [radius=0.05]; \draw [fill,red] (-2,0) circle [radius=0.05]; \draw [fill,red] (1.62,1.17) circle [radius=0.05]; \draw [fill,green] (-1.62,1.17) circle [radius=0.05]; \draw [fill,green] (-1.62,-1.17) circle [radius=0.05]; \draw [fill,red] (1.62,-1.17) circle [radius=0.05];  \draw [fill,green] (0.62,1.9) circle [radius=0.05]; \draw [fill,red] (-0.62,1.9) circle [radius=0.05]; \draw [fill,red] (-0.62,-1.9) circle [radius=0.05]; \draw [fill,green] (0.62,-1.9) circle [radius=0.05]; 

\draw [->-] (-0.62, -1.9) -- (0.6, -1.9); \draw [->-] (0.62, -1.9) -- (1.62, -1.17); \draw (0,-1.7) node {$a$}; \draw (1,-1.35) node {$b$}; \draw [->-] (1.62, -1.17) -- (2, 0); \draw (1.65,-0.5) node {$c$}; \draw [->-] (2, 0) -- (1.62, 1.17); \draw (1.65,0.5) node {$d$}; \draw [->-] (1.62, 1.17) -- (0.62, 1.9); \draw (1,1.35) node {$e$}; \draw [->-] (0.6, 1.9) -- (-0.62, 1.9); \draw (0,1.7) node {$a^{-1}$}; \draw [->-] (-0.62, 1.9) -- (-1.62, 1.17); \draw (-0.9,1.35) node {$b^{-1}$}; \draw [->-] (-1.62, 1.17) -- (-2, 0); \draw (-1.45,0.5) node {$c^{-1}$}; \draw [->-] (-2, 0) -- (-1.62, -1.17); \draw (-1.4,-0.5) node {$d^{-1}$}; \draw [->-] (-1.62, -1.17) -- (-0.62, -1.9); \draw (-0.8,-1.35) node {$e^{-1}$};

\draw [blue, ->-,->-] (0,0) circle [radius = 0.5];

\draw [fill,red] (5.25,1) circle [radius=0.05]; \draw [fill,red] (5,1.25) circle [radius=0.05]; \draw [fill,red] (4.75,1) circle [radius=0.05]; \draw [fill,red] (5.125,0.75) circle [radius=0.05]; \draw [fill,red] (4.875,0.75) circle [radius=0.05];

\draw [fill,green] (5.25,-1) circle [radius=0.05]; \draw [fill,green] (5,-1.25) circle [radius=0.05]; \draw [fill,green] (4.75,-1) circle [radius=0.05]; \draw [fill,green] (5.125,-0.75) circle [radius=0.05]; \draw [fill,green] (4.875,-0.75) circle [radius=0.05];

\draw [->-](5.125,-0.7) -- (5.125,0.7); \draw [->-] (4.875,0.7)--(4.875,-0.7);

\draw [rounded corners = 2mm, ->-] (5.175, 0.75) -- (6.25, 0.75) -- (6.25, -0.2) -- (5.125,-0.2); \draw [rounded corners = 3mm, ->-] (5.125,-0.45)-- (6.5, -0.45)-- (6.5, 1) --(5.3, 1); \draw [rounded corners = 1.5mm, ->-] (4.825, -0.75)-- (4.25, -0.75) -- (4.25,-0.45) -- (4.875, -0.45); \draw [rounded corners = 2.5mm, ->-] (4.875, -0.2) -- (4,-0.2) -- (4, -1) -- (4.7, -1);

 \draw [rounded corners=1.5mm, ->-] (5.3, -1)-- (6, -1) --(6, -0.45); \draw  [rounded corners=2mm, ->- ] (6, -0.2) -- (6, 0.45) --(5.125, 0.45); \draw [rounded corners = 1mm, ->-] (4.875, 0.45) -- (4.25, 0.45) -- (4.25, 0.75) -- (4.825, 0.75);
\draw [rounded corners = 2mm, ->-] (4.7, 1) -- (4, 1) -- (4, 0.2) -- (4.875, 0.2); \draw [rounded corners=1.5mm, ->-](5.125, 0.2) -- (5.75, 0.2) -- (5.75, -0.2);\draw [rounded corners=1.5mm, ->-] (5.75, -0.45)-- (5.75, -0.75) -- (5.175, -0.75);

\draw [rounded corners = 2mm, ->-] (4.95, 1.27) -- (4.3, 2) -- (3.25,2) -- (3.25, -1.75) -- (5.5, -1.75) -- (5.55, -1.4) -- (5.27, -1.04);
\draw [->-, rounded corners = 1.2mm](5.05, -1.25) --(5.25, -1.3)-- (5.25, -1.5)--(3.5, -1.5)--(3.5, 1.75)--(4.2, 1.75)--(4.75,1.05);  
\draw [->-, rounded corners = 1.5mm](4.72,-1.04)-- (3.9, -1.3) -- (3.9, -1.5); \draw [->-, rounded corners=1mm](4.15, -1.5) -- (4.15, -1.4) -- (4.95,-1.25);\draw [->-, rounded corners = 2mm] (3.9, -1.75) -- (3.9, -2.2) -- (7, -2.2)-- (7,2) -- (5.7, 2) -- (5.05, 1.27); \draw [rounded corners= 1.5mm, ->-] (5.3, 1.03) -- (5.8, 1.75) --(6.75, 1.75) -- (6.75, -1.95) -- (4.15,-1.95)--(4.15, -1.75);

\draw (5.3, 0) node {\tiny{$a$}}; \draw [blue](4.6, 0) node {\tiny{$a^{-1}$}}; \draw (3.85, -0.5) node {\tiny{$b$}}; \draw (7.15, -0.8) node {\tiny{$c$}}; \draw (3.1, -0.5) node {\tiny{$d$}}; \draw (6, -1.1) node {\tiny{$e$}}; \draw [blue] (4.55, -0.6) node {\tiny{$b^{-1}$}}; \draw [blue] (6.5, -0.7) node {\tiny{$c^{-1}$}}; \draw [blue](3.8, -0.1) node {\tiny{$d^{-1}$}}; \draw [blue](3.8, 0.7) node {\tiny{$e^{-1}$}};

\draw [blue, rounded corners=3mm] (-2.5, 2.2) -- (-2.5, -2.5) -- (11, -2.5) -- (11, 2.2)--cycle; \draw [blue] (7.5, -2.5) -- (7.5, 2.2); \draw [blue] (2.5, -2.5) -- (2.5, 2.2);

\draw [fill,red] (9,1) circle [radius=0.05]; \draw [fill,green] (9,-1) circle [radius=0.05]; \draw (9, 0.95) -- (9, -0.95);

\draw [rounded corners = 1.5mm](8.95, 1) -- (8.5, 1) -- (8.5, 0.4) -- (8.98, 0.4); \draw [rounded corners = 1.5mm](9.03, 0.4) -- (9.5, 0.4) --(9.5, -0.37); \draw [rounded corners=1.5mm](9.5, -0.43) -- (9.5, -1) -- (9.05, -1);
\draw [rounded corners= 1.5mm] (9.05, 1) -- (10, 1) -- (10, -0.4) -- (9.03, -0.4); \draw [rounded corners=1.5mm] (8.97, -0.4) -- (8.5, -0.4) -- (8.5, -1) -- (8.95, -1);

\draw [rounded corners= 1.5mm](9.02, -1.03) -- (9.5, -1.3) -- (9.5, -1.6) -- (8, -1.6) -- (8, 1.5) -- (8.5, 1.5) -- (8.98, 1.03); \draw [rounded corners = 2mm](9.02, 1.03) -- (9.5, 1.5) -- (10.5, 1.5) -- (10.5, -2) -- (8.5, -2) -- (8.5, -1.63); \draw [rounded corners = 1.5mm] (8.5, -1.57) -- (8.5, -1.27) -- (8.98, -1.03);

\draw (9.1,-0.7) node {\tiny{$a$}}; \draw (8.65,-0.87) node {\tiny{$b$}}; \draw (8.7,-1.3) node {\tiny{$c$}}; \draw (9.25,-1.3) node {\tiny{$d$}}; \draw (9.45,-1.05) node {\tiny{$e$}};

\draw (8.9,0.7) node {\tiny{$a$}}; \draw (9.45,0.87) node {\tiny{$b$}}; \draw (9.4,1.27) node {\tiny{$c$}}; \draw (8.8,1.35) node {\tiny{$d$}}; \draw (8.5,1.05) node {\tiny{$e$}};
\end{scope}
\end{tikzpicture}
\caption{A fat graph associated with a decagon with opposite side-pairing.}
\label{fig:13}
\end{center}
\end{figure}
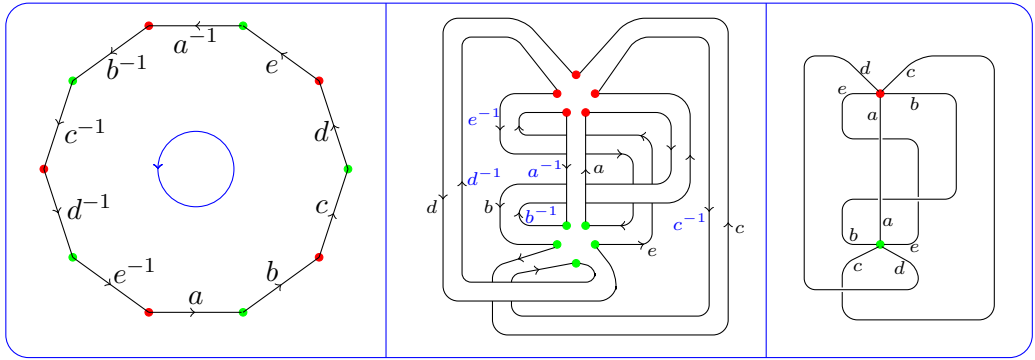

\noindent We will now establish a correspondence between finite order maps realizable as a rotations of a polygons and automorphisms of fat graphs with one boundary component (via $\p \leftrightarrow \Gamma(\p)$). 

\begin{theorem}
\label{thm:fg_auto}
There is a bijective correspondence between finite order homeomorphisms on $S_g$ realizable as rotations of polygons and automorphisms of fat graphs with a single boundary component.
\end{theorem}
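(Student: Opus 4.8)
The plan is to realize the asserted bijection as a composition of two compatible correspondences: the object-level correspondence $\p \leftrightarrow \Gamma(\p)$ between side-paired polygons and fat graphs with a single boundary component, and a symmetry-level correspondence between rotations of $\p$ and automorphisms of $\Gamma(\p)$. First I would verify that $\Gamma(\p)$ always has exactly one boundary component, which is the structural heart of the object-level statement. A side-paired $2k$-gon determines a CW decomposition of the closed surface $S(\p)$ with a single $2$-cell (the polygon itself), $k$ edges (the paired sides), and $0$-cells given by the classes of $\sim$. By construction $\Gamma(\p)$ is precisely the $1$-skeleton of this decomposition equipped with the cyclic orders $\sigma_0$ coming from the embedding, and the unique $2$-cell is exactly the complementary region of a regular neighborhood of $\Gamma(\p)$ in $S(\p)$. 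Equivalently, tracing the boundary word $W(\p)$ once around $\p$ is a single closed traversal, so it realizes a single orbit of $\sigma_1\sigma_0^{-1}$; by the formula for boundary components recalled earlier, $\Gamma(\p)$ has exactly one boundary component.

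For the reverse object-level direction, given a fat graph $\Gamma$ with one boundary component, I would read the cyclic word of directed edges along its unique boundary cycle (the single orbit of $\sigma_1\sigma_0^{-1}$) to produce a polygon $\p$ whose side-pairing is dictated by $\sigma_1$. A direct check that $\Gamma(\p) \cong \Gamma$ then shows these two constructions are mutually inverse, establishing the bijection at the level of objects (working throughout up to the natural equivalences: choice of base side of $\p$ and relabeling of edges).

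Next I would promote this to a correspondence of symmetries. If a finite order homeomorphism $f$ is realized as a rotation of $\p$ carrying sides to sides and preserving the side-pairing, then the induced permutation of $E = \{a_i, a_i^{-1}\}$ commutes with $\sigma_1$ (since $f$ preserves the pairing) and with $\sigma_0$ (since a rotation preserves the cyclic order of sides, hence the cyclic order around each vertex of $\Gamma(\p)$), and it respects $\sim$; thus $f$ descends to an element $F \in \text{Aut}(\Gamma(\p))$ of the same order. Conversely, an automorphism $F$ of a one-boundary-component fat graph permutes boundary components and so fixes the unique boundary cycle; since $F$ commutes with $\sigma_1\sigma_0^{-1}$ and that cycle is a single orbit of $\sigma_1\sigma_0^{-1}$, the restriction of $F$ to the orbit is forced to be a power of this cyclic permutation, i.e. a cyclic shift of $W(\p)$. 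Interpreting this shift as a rotation of $\p$ recovers a finite order homeomorphism of $S(\p)$ realized as a rotation, and matching orders on both sides shows this inverts the previous assignment.

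The main obstacle I anticipate is precisely the last point: showing that an arbitrary automorphism of a one-boundary-component fat graph necessarily acts on the boundary cycle as a \emph{rigid} cyclic rotation rather than some more general symmetry. The key leverage is that $F$ commutes with $\sigma_1\sigma_0^{-1}$ while the boundary cycle is a single orbit of that permutation, and any bijection of a single cycle commuting with the cycle must be a power of it; this pins $F$ down to a rotation of the polygon. A secondary, more bookkeeping-type issue is phrasing both constructions as maps between equivalence classes (modulo base-point and relabeling) so that they are genuinely mutually inverse; once that is set up carefully, the equality of orders makes the correspondence a bijection of the two indicated classes.
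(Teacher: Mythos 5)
Your proposal is correct, and its overall architecture (object-level correspondence $\p \leftrightarrow \Gamma(\p)$ plus a symmetry-level correspondence checked via commutation with $\sigma_0$ and $\sigma_1$) matches the paper's. The forward direction is essentially identical: you verify that a rotation of $\p$ induces a permutation of $E$ commuting with $\sigma_1$ (orientation preservation) and $\sigma_0$ (preservation of the cyclic order), exactly as the paper does. Where you genuinely diverge is the key step of the converse. The paper extends a fat graph automorphism $F$ to a homeomorphism $\widehat{F}$ of the thickened surface $\Sigma(\Gamma)$ and asserts that its restriction to the unique boundary circle is an orientation-preserving homeomorphism of the circle, ``hence a rotation'' --- a topological argument that, as stated, is slightly loose (an arbitrary orientation-preserving homeomorphism of $S^1$ need not be a rotation; one must invoke finiteness of order and a conjugation, and even then one only gets a rotation up to reparametrization). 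Your argument replaces this with the purely combinatorial observation that $F$ commutes with $\sigma_1\sigma_0^{-1}$, which acts as a single $|E|$-cycle on $E$ when there is one boundary component, and the centralizer of a full cycle in the symmetric group is the cyclic group it generates; hence $F$ is literally a power of the boundary cycle, i.e.\ a rigid cyclic shift of $W(\p)$. This is sharper and self-contained, and it directly pins down the rotation angle. You also make explicit two points the paper leaves implicit: that $\Gamma(\p)$ has exactly one boundary component (via the single orbit of $\sigma_1\sigma_0^{-1}$), and that the two object-level constructions are mutually inverse up to base-point and relabeling. The one item to spell out fully in a final write-up is that the cyclic shift descends to the quotient surface, i.e.\ that paired sides go to paired sides; this follows from $F\circ\sigma_1=\sigma_1\circ F$ exactly as you indicate, so there is no gap, only a detail to expand.
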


\begin{proof}
Let $f$ be a finite order homeomorphism on $S_g$ that is realized as the rotation of a $2k$-sided  polygon $\p$ with boundary word $W(\p)$ comprising letters from $$\{a_1, a_2, \dots, a_k, a_1^{-1}, a_{2}^{-1}, \dots, a_k^{-1}\},$$ which is the set $E$ of directed edges of the graph $\Gamma(\p)$. The map $\tilde{f} : E \to E$ induced by $f$ is injective, as it is the restriction of the rotation map of $\p$. From this, it follows that $\tilde{f}$ is bijective. To prove $\tilde{f}$ is an automorphism, we need to show that the two diagrams in Figure~\ref{fig:14} are commutative.
\tikzset{->-/.style={decoration={
  markings,
  mark=at position .5 with {\arrow{>}}},postaction={decorate}}}
 
\begin{figure}[htbp]
\begin{center}
\begin{tikzpicture}
\begin{scope}[every node/.style={sloped,allow upside down}]
\draw (-1.3,0.6)node {$(1)$};
\draw (-0.2, 0.6)node {$E$};\draw (1.7, 0.6)node {$E$};\draw [->](0, 0.6) -- (1.5, 0.6); \draw (-0.2, -0.6)node {$E$};\draw (1.7, -0.6)node {$E$};\draw [->](0, -0.6) -- (1.5, -0.6); \draw [->](-0.15,0.4) -- (-0.15, -0.4);\draw [->](1.65,0.4) -- (1.65, -0.4);
\draw (3.8, 0.6)node {$E$};\draw (5.7, 0.6)node {$E$};\draw [->](4, 0.6) -- (5.5, 0.6); \draw (3.8, -0.6)node {$E$};\draw (5.7, -0.6)node {$E$};\draw [->](4, -0.6) -- (5.5, -0.6); \draw [->](3.85,0.4) -- (3.85, -0.4);\draw [->](5.65,0.4) -- (5.65, -0.4);
\draw (3,0.6)node {$(2)$};

\draw (0.8,0.85)node {$\tilde{f}$}; \draw (4.8,0.85)node {$\tilde{f}$}; \draw (0.8,-0.88)node {$\tilde{f}$}; \draw (4.8,-0.88)node {$\tilde{f}$};
\draw (-0.4,0)node {$\sigma_1$}; \draw (3.6,0)node {$\sigma_0$}; \draw (1.9,0)node {$\sigma_1$}; \draw (5.9,0)node {$\sigma_0$};
\end{scope}
\end{tikzpicture}
\end{center}
\caption{Commutative diagrams associated with a fat graph.}
\label{fig:14}
\end{figure}
\noindent Let $a\in E$ and $\tilde{f}(a)=a_{*}$. As $\tilde{f}$ is induced from an orientation preserving homeomorphism, we have $\tilde{f}(a^{-1})=\tilde{f}(a)^{-1}=a_{*}^{-1}$. Therefore, we have $\tilde{f}\circ \sigma_1(a)=a_*^{-1}=\sigma_1\circ\tilde{f}(a)$, and so it follows that diagram (1) commutes.

Now, let $a\in E$ and $\tilde{f}(a)=a_*$. By definition, we have $\sigma_0(a)$ is the edge of $\p$ next to $a^{-1}(=\sigma_1(a))$, from which it follows that $\tilde{f}\circ\sigma_0(a)$ is the edge of $\p$ next to $\tilde{f}(a^{-1})$. So, we have $\tilde{f}(a^{-1})=\left(\tilde{f}(a)\right)^{-1}=a_*^{-1}$. Therefore, $\tilde{f}\circ\sigma_0(a)$ is the edge next to $a_*^{-1}$, which is the same as $\sigma(a_*)=\sigma\circ\tilde{f}(a)$, and the commutativity of (2) follows. 

Conversely, let $\Gamma=(E, \sim, \sigma_1, \sigma_0)$ be a fat graph with $v\, (=|E/\!_\sim|)$ vertices, $e \, (=|E/\sigma_1|)$ edges, and a single boundary component satisfying $v-e=1-2g.$  Let $\delta=a_1a_2\ldots a_{2e}$  be the boundary component with $a_i\in E$. Let $F\in \text{Aut}(\Gamma)$ be a fat graph automorphism. We can represent $\delta$ as a reduced word with letters in $E$. In this representation, each letter and its inverse appears exactly once (with the convention that $\left(x^{-1}\right)^{-1}=x$). Therefore, the boundary determines a polygon $\p = \p_{2e}$, and its pairing $W(\p)$. By an Euler characteristic argument  and the condition $v-e=1-2g$, we conclude $S(\p) \approx S_g$. 

The automorphism $F$ naturally extends to an orientation preserving homeomorphism $\widehat{F}$ on the surface $\Sigma(\Gamma)$. The restriction of $\widehat{F}$ on to the boundary is a orientation preserving homeomorphism of the circle, and hence a rotation which we denote by $\tilde{f}$. Now we show that $\tilde{f}$ gives a finite order homeomorphism on $S_g$. Let $e\in E$, then we have $\tilde{f}\circ\sigma_1(e)=\sigma_1\circ \tilde{f} (e)$, from which it follows that $\tilde{f}(e^{-1})=\tilde{f}(e)^{-1}$. The condition $\tilde{f}\circ \sigma_0 = \sigma_0\circ\tilde{f}$ ensures that $\{v_1, \dots, v_s\}$ is an orbit of the vertices of $\p$ with respect to the side-pairing $W(\p)$ if, and only if $\{\tilde{f}(v_1), \dots, \tilde{f}(v_s)\}$ is also an orbit. Therefore, $\tilde{f}$ can be extended to a homeomorphism in the quotient space $S_g$, which we denote by $f$. As $\tilde{f}$ is of finite order, so is $f$.
\end{proof}

\noindent  In order to establish a correspondence between generic finite order mapping classes and fat graph automorphisms, we restrict our attention to a special class of fat graph automorphisms as defined below.  
\begin{definition}
Let $\Gamma=(E,\sim,\sigma_1,\sigma_0)$ be a fat graph of genus $g$ and one boundary component. We say an order $n$ automorphism $F\in \text{Aut}(\Gamma)$ is \textit{irreducible} if it satisfies the following:
\begin{enumerate}[(i)]
\item $|E|=n\text{ or }2n$.
\item $|V / \langle F \rangle| = \begin{cases} 
1, &  \text{if } |E| = n, \text{ and}\\
2, & \text{otherwise.}
\end{cases}$
\end{enumerate} 
\end{definition}

\noindent In the following result, which is a direct consequence of Theorem~\ref{thm:fg_auto}, we establish a correspondence between irreducible fat graph automorphisms and irreducible Type 1 actions. For brevity, we state the result for irreducible fat graph automorphisms $F$ which satisfy $|V / \langle F \rangle|=2$, as the result for the case when $|V / \langle F \rangle|=1$ is analogous.

\begin{corollary}
Let  $\Gamma=(E,\sim,\sigma_1,\sigma_0)$ be a fat graph of genus $g$, and let $F\in \text{Aut}(\Gamma)$ be irreducible of order $n$ with $|V / \langle F \rangle |=2$. Let $F\vert_V=\tau_1\tau_2$ be product of disjoint cycles, where $\tau_i=(v_{i1},\dots,v_{ik_i}),\text{ for }i=1,2$, and $\omega_{ij}$ is the cyclic order at $v_{ij}$. Suppose that $s_i$ is the least positive integer such that $F^{k_i}(\omega_{ij})=(\omega_{ij}^{-1})^{s_i}$, for $i = 1,2$, and $s_3$ is the least positive integer such that $(\sigma_1\sigma_0^{-1})^{s_3}=F$. Then $F$ corresponds to an irreducible Type 1 action $D=(n,0;(c_1,n_1),(c_2,n_2),(c_3,n))$ on $S_g$, where for $i=1,2$, $n_i=n/k_i $, and $c_j s_j\equiv 1\pmod{n_i}$, for $1 \leq j \leq 3$.
\end{corollary}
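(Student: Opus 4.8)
The plan is to read off the data set of the $C_n$-action produced by $F$ through the correspondence of Theorem~\ref{thm:fg_auto}, and then to verify term by term that this data set is exactly $D$. So first I would apply the converse direction of Theorem~\ref{thm:fg_auto} to $F$: since $\Gamma$ has a single boundary component and $F\in\text{Aut}(\Gamma)$ has order $n$, we obtain a finite order homeomorphism $\tilde f$ of $S_g$ realized as a rotation of the polygon $\p$ whose sides are the $2e=2n$ letters of the boundary word $\delta=a_1\cdots a_{2n}$ (here $e=|E/\sigma_1|=n$, as $|E|=2n$), with side-pairing induced by $\sigma_1$. This $\tilde f$ generates a $C_n$-action, and it suffices to show its data set equals $D=(n,0;(c_1,n_1),(c_2,n_2),(c_3,n))$.

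Next I would identify the cone points. The center of $\p$ is fixed by $\tilde f$ and yields a cone point of order $n$, the distinguished $(c_3,n)$. The remaining cone points are the images in $S_g$ of the vertex classes $V$ of $\Gamma$; since $|V/\langle F\rangle|=2$, there are exactly two of them, so $\ell=3$ and $\tilde f$ is a Type 1 action. The two $\langle F\rangle$-orbits $\tau_1,\tau_2$ on $V$ have sizes $k_1,k_2$, and by orbit--stabilizer the stabilizer of $v_{ij}$ is $\langle F^{k_i}\rangle$, of order $n/k_i$; hence the corresponding cone point has order $n_i=n/k_i$, as claimed. To establish $g_0=0$ (and thus irreducibility), I would compare two computations of $\chi(S_g)$: the cell structure with one face, $n$ edges, and $k_1+k_2=n/n_1+n/n_2$ vertices gives $\chi=n/n_1+n/n_2-n+1$, while the Riemann--Hurwitz equation for $(n,g_0;(c_1,n_1),(c_2,n_2),(c_3,n))$ then forces $g_0=0$. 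This is precisely the vertex count obtained via Burnside's Lemma in Lemma~\ref{lg}.

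The crux is the determination of the rotation numbers $c_j$. For $j=1,2$, the stabilizer generator $F^{k_i}$ permutes the cyclically ordered set of edges incident at $v_{ij}$, and the integer $s_i$ records this combinatorial rotation, the inverse in $(\omega_{ij}^{-1})^{s_i}$ accounting for the orientation reversal of the vertex link induced by the pairing. Equating this combinatorial rotation number with the geometric local rotation angle $2\pi c_i^{-1}/n_i$ around the cone point --- which is exactly the angle computation performed in the proof of Theorem~\ref{main}, now phrased intrinsically on $\Gamma$ --- yields $c_i s_i\equiv 1\pmod{n_i}$. For $j=3$, since $F$ commutes with $\sigma_0$ and $\sigma_1$ it commutes with the boundary permutation $\sigma_1\sigma_0^{-1}$, which is a single $2n$-cycle because $\Gamma$ has one boundary component; as the centralizer of a full cycle in the symmetric group is the cyclic group it generates, $F=(\sigma_1\sigma_0^{-1})^{s_3}$ for a unique least positive $s_3$, and reading this exponent against the rotation $t(a_i)=a_{i+2c_3^{-1}}$ of $\p$ gives the relation for $c_3$.

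The main obstacle is this third step. The identities $n_i=n/k_i$ and $g_0=0$ are essentially bookkeeping, but correctly matching the combinatorial data $s_i$ to the arithmetic rotation data $c_i$ requires tracking the orientation conventions with care: the direction of the cyclic order $\sigma_0$, the orientation reversal of the vertex links under the side-pairing (the source of the inverse in $(\omega_{ij}^{-1})^{s_i}$), and the direction in which $\sigma_1\sigma_0^{-1}$ traverses $\partial\p$. These conventions must be normalized consistently with the proof of Theorem~\ref{main}, so that the combinatorial shifts reproduce the local angles $2\pi c_j^{-1}/n_j$ exactly, rather than their inverses or a shifted variant.
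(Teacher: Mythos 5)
Your proposal is correct and follows exactly the route the paper intends: the paper states this corollary with no proof at all, merely declaring it a direct consequence of Theorem~\ref{thm:fg_auto}, and your argument --- pass through that theorem to realize $F$ as a polygon rotation, count vertex orbits and apply orbit--stabilizer to get $n_i=n/k_i$, combine the Euler characteristic of the one-face cell structure with Riemann--Hurwitz to force $g_0=0$, and match the combinatorial shifts $s_j$ to the local rotation angles as in the proof of Theorem~\ref{main} --- is precisely the expansion of that remark. The one step you leave at the level of a sketch, namely the orientation bookkeeping needed to pin down $c_js_j\equiv 1\pmod{n_j}$ rather than an inverse or shifted variant, is likewise left unverified by the paper, so your writeup is if anything more complete than the original.
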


\noindent In view of above corollary, we denote by $F_D$, the irreducible fat graph automorphism that corresponds to a irreducible Type 1 action $D$, and we have the following definition. 
\begin{definition}\label{def:fat_graph_pair}
We say a pair $F_i$, for $i=1,2$, of irreducible fat graph automorphisms in $\text{Aut}(\Gamma_i)$ form a \textit{compatible pair} $( F_1,F_2 )$, if there exits irreducible Type 1 action $D_i$ such that 
\begin{enumerate}[(i)]
\item $F_i = F_{D_i}$, and 
\item the $D_i$ form an $(r,s)$-compatible pair $(D_1,D_2)$. 
\end{enumerate}
\end{definition}

\noindent We now give an example that will motivate the construction of a fat graph automorphism corresponding to an $(r,s)$-compatible pair.

\begin{example}\label{eg:fg_cpair}
Consider the $C_6$-action 
$$D=(6,0;(1,2),(1,2),(1,6),(5,6))$$ on $S_3$ realizable as a $(2,2)$-compatible pair $(D_1,D_2)$ of the Type 1 data sets, where $$D_1=(6,0;(1,2),(1,3),(1,6))\text{ and }D_2= (6,0;(1,2),(2,3),(5,6)).$$  Applying Theorem~\ref{thm:fg_auto}, we can obtain $F_{D_i}\in \text{Aut}(\Gamma_{i})$, where $\Gamma_{i}=\Gamma(\p_{D_i})$.

Consider the polygons $\tilde{\p_i}$ obtained from $\p_{D_i}$ by removing invariant discs around each point of the compatible orbits of size 2, as shown in Figure~\ref{fig:fat_graph_orbit}. This gives the following gluing conditions (up to some choice):
$$ g_1\sim k_1^{-1},\,g_2\sim k_3^{-1},g_3\sim k_2^{-1},\,h_1\sim l_2^{-1},\,h_2\sim l_1^{-1},\text{ and }h_3\sim l_3^{-1}.$$
\begin{figure}[H]
\labellist
\small
\pinlabel $\huge \curvearrowleft$ at 180 133
\pinlabel $O$ at 180 116
\pinlabel $2\pi/6$ at 180 149
\pinlabel . at 180 129
\pinlabel $e_1$ at 180 -1
\pinlabel $e_1$ at 180 240
\pinlabel $e_2$ at 305 50
\pinlabel $e_3$ at 310 183
\pinlabel $e_3$ at 50 53
\pinlabel $e_2$ at 46 183
\pinlabel $h_2$ at 120 38
\pinlabel $h_3$ at 120 200
\pinlabel $g_1$ at 232 36
\pinlabel $g_3$ at 234 202
\pinlabel $h_1$ at 302 123
\pinlabel $g_2$ at 53 121
\pinlabel $\huge \curvearrowleft$ at 527 133
\pinlabel $O$ at 527 116
\pinlabel $10\pi/6$ at 527 149
\pinlabel . at 527 129
\pinlabel $f_1$ at 527 -3
\pinlabel $f_1$ at 527 242
\pinlabel $f_2$ at 653 52
\pinlabel $f_3$ at 659 183
\pinlabel $f_3$ at 400 52
\pinlabel $f_2$ at 394 183
\pinlabel $l_2$ at 471 36
\pinlabel $l_3$ at 468 200
\pinlabel $k_1$ at 584 38
\pinlabel $k_3$ at 584 204
\pinlabel $l_1$ at 656 121
\pinlabel $k_2$ at 402 121
\endlabellist
\centering
\includegraphics[width = 70 ex]{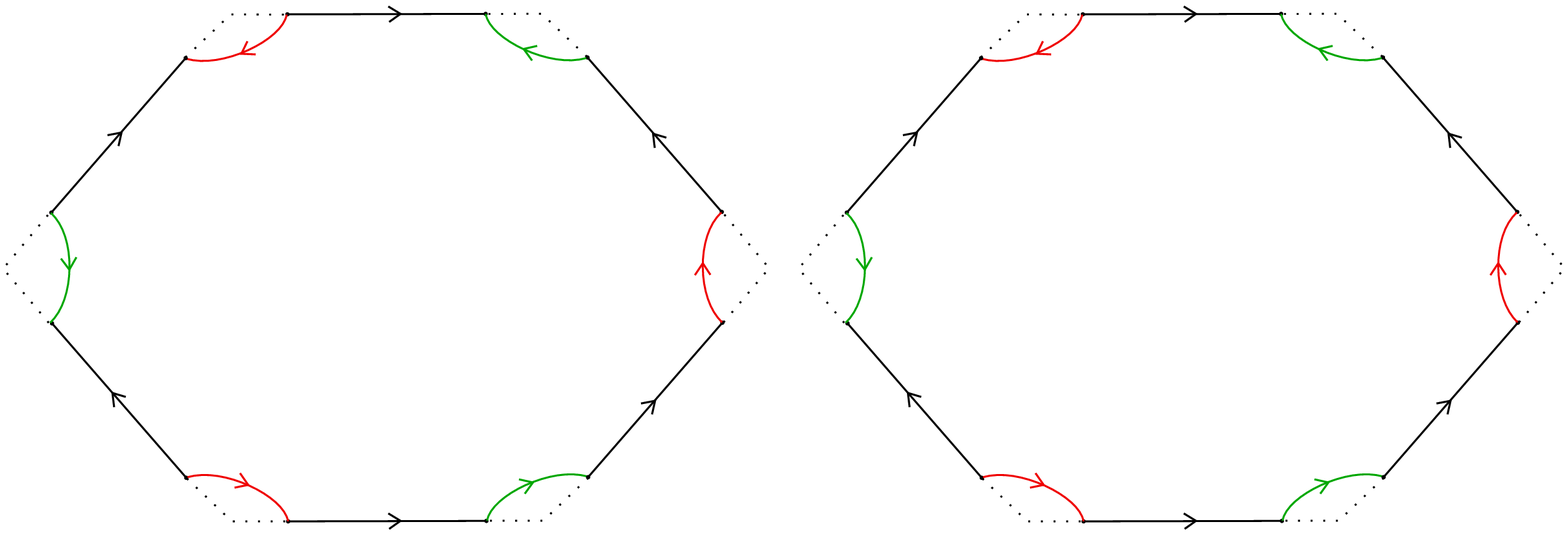}
\caption{The surfaces $\tilde{\p_i}$,     for $i = 1,2$.}
\label{fig:fat_graph_orbit}
\end{figure}
Define a fat graph $\Gamma =(E, \sim, \sigma_1, \sigma_0)$, which is described as follows.
\begin{enumerate}[(i)]
\item $E=\{e_i,f_i,g_i,h_i,e_i^{-1},f_i^{-1},g_i^{-1},h_i^{-1}\mid 1\leq i \leq 3\}$.
\item $\sigma_1(e_i)=e_i^{-1},\sigma_1(f_i)=f_i^{-1}, \sigma_1(g_i)=g_i^{-1},\sigma_1(h_i)=h_i^{-1},\text{ for }1\leq i\leq 3$.
\item Let $x_1=e_1$, then $x_2=h_3$ which is the edge of $\tilde{\p_1}$ next to $\sigma_1(e_1)=e_1^{-1}$.
 By the gluing conditions $h_3^{-1}\sim l_3$, and hence we have, $x_3=f_2^{-1}$,  which is the edge of $\tilde{\p_2}$ next to $\sigma_1(h_3)=h_3^{-1}$.  Again by the gluing conditions, we have $l_1\sim h_2^{-1}$, and since $l_1$ is the edge of $\tilde{\p_2}$ next to $\sigma(f_2^{-1})=f_2$, we have $x_4=h_2^{-1}$, and so $x_5=e_1$. Therefore, the equivalence class containing $e_1$ is $v_1=\{x_1, x_2, x_3, x_4\}= \{e_1, h_3,f_2^{-1},h_2^{-1}\}.$ Similarly, the other equivalence classes are given by
$v_2= \{ e_1^{-1},g_1,f_2,g_3^{-1}\},\,v_3=\{e_2,g_2,f_1^{-1},g_1^{-1}\},\,v_4=\{e_2^{-1},h_1,f_1,h_3^{-1}\},
v_5=\{e_3,h_2,f_3,h_1^{-1}\},\,v_6=\{e_3^{-1},g_3,f_3^{-1},g_2^{-1}\}$. Thus we have $V=E/\!\!\sim = \{v_1, v_2.\ldots,v_6\}$.
\item $\displaystyle \sigma_0=\prod_{i=1}^6 \tau_i$, where $\tau_1=(e_1, h_3,f_2^{-1},h_2^{-1}),\,\tau_2=( e_1^{-1},g_1,f_2,g_3^{-1}),\,\tau_3=(e_2,g_2,f_1^{-1},g_1^{-1}),\,\tau_4=(e_2^{-1},h_1,f_1,h_3^{-1}),\,\tau_5=(e_3,h_2,f_3,h_1^{-1}),\,\tau_6=(e_3^{-1},g_3,f_3^{-1},g_2^{-1})$.
\end{enumerate}

Define $F:E \to E$ induced by the action of $D_i$ on ${\tilde \p_i}$ by 
\begin{gather*}
 \displaystyle \omega_F=\prod_{i=1}^4\omega_i, \text{ where }\omega_1=(e_1,e_2,e_3,e_1^{-1},e_2^{-1},e_3^{-1}), \,\omega_2=(g_1,h_1,g_3,h_3,g_2,h_2),\,\\ \omega_3=(f_1,f_3^{-1},f_2^{-1},f_1^{-1},f_3,f_2),\,\omega_4=(g_1^{-1},h_1^{-1},g_3^{-1},h_3^{-1},g_2^{-1},h_2^{-1}).
\end{gather*}
 A simple calculations shows that the action of $F$ on $V$ is given by $$(v_1,v_3,v_5)(v_2,v_4,v_6),$$ and it is easy to see that $F\circ \sigma_1=\sigma_1\circ F$ and  $F\circ \sigma_0=\sigma_0\circ F$. Therefore, $F \in \text{Aut}(\Gamma)$.
\end{example}

\noindent In the following remark, we generalize the idea in Example~\ref{eg:fg_cpair}, to describe a construction of a fat graph automorphism that corresponds to an $(r,s)$-compatible pair of irreducible Type 1 actions.

\begin{remark}\label{rem:comp_fat_graphs}
Consider a compatible pair $(F_1,F_2)$ of irreducible fat graph automorphisms as in Definition~\ref{def:fat_graph_pair}, where the $F_i \in \text{Aut}(\Gamma_i)$, and let $V_i$ denote the vertex set of $\Gamma_i$. Suppose that $|V_i / \langle F_i \rangle | = 2$. Then $F_i \vert_{V_i}=(v_1^i,\dots,v_{k_1^i}^i)(w_1^i,\dots,w_{k_2^i}^i)\text{ for }i=1,2, \text{ where }O_1= (v_1^1,\dots,v_{k_1^1}^1)\text{ and} $
$O_2=(v_1^2,\dots,v_{k_1^2}^2)$ are compatible (in the sense that they correspond to orbits that were involved in the compatibility of the $D_i$).  Let $v_i^j=(e_{i1}^j,\dots,e_{i n_1^j}^j)$ for $1\leq i\leq k_1^1 \text{ and } j=1,2,$ with $k_i^j=n/n_i^j$. Observe that when we remove invariant (pairwise disjoint) disks around vertices in the compatible orbits, the local picture at each vertex $v_i^j$ in $O_j$   is transformed to $(e_{i1}^j,g_{i1}^j,e_{i2}^j,g_{i2}^j,\dots, e_{in_1^1}^j,g_{in_1^1}^j),$ where the boundary circles are the concatenations of the arcs $g_{ik}^j$. We now define a fat graph $\lp \Gamma_1, \Gamma_2\rp :=(E,\sim,\sigma_0,\sigma_1)$ as follows:
\begin{enumerate}[(i)]
\item $E=\{ e_{ij}^1,(e_{ij}^1)^{-1},g_{ij},g_{ij}^{-1},e_{ij}^2,(e_{ij}^2)^{-1},|\text{ for }\,1\leq i \leq n/n_1^1,1\leq j\leq n_1^1\}$. 
\item The vertices of $\Gamma$ are given by the following procedure:  Let $x_1=e_{ij}^1$, take $x_2=g_{ij}$. There exist $1\leq t_{ij}\leq n-1$ such that $e_{ij}^1=F_1^{t_{ij}}(e_{11}^1)$.  If $e=F_2^{t_{ij}}(e_{11}^2)$ and $v$ is the unique vertex containing $e$, then we define $x_3$ to be the succeeding edge to $e$ in the cyclic order at $v$ in $\Gamma_2$ and so on. Continuing this way, we obtain the set of all vertices and hence the fat graph $\lp\Gamma_1, \Gamma_2\rp$. 
\end{enumerate} 
Now, we define a map  $F=\lp F_1, F_2\rp$ on $\lp \Gamma_1, \Gamma_2\rp$ as follows: 
$$ F(e)= \begin{cases}
F_1(e), & \text { if } e=e_{ij}^1\text{ or }(e_{ij}^1)^{-1}, \text{ and } \\
F_2(e), & \text { if } e=e_{ij}^2\text{ or } (e_{ij}^2)^{-1}, \\
g_{kl}, & \text { if } e=g_{ij} \text{ and } F_1(e_{ij}^1)=e_{kl}, \text{ and } \\
g_{kl}^{-1}, & \text { if } e=g_{ij} \text{ and } F_1(e_{ij}^2)=e_{kl}.
\end{cases}$$
  It is straightforward to see that $F$ is an automorphism of $\Gamma$. An analogous construction works for the case when $|V_i / \langle F_i \rangle | = 1$.
 \end{remark}
  \noindent The discussion in Remark~\ref{rem:comp_fat_graphs} leads us to the following result.
   
\begin{theorem}
\label{thm:comp_fat_graphs}
There is a bijective correspondence between $(r,s)$-compatible pairs of irreducible Type 1 actions and compatible pairs of irreducible fat graph automorphisms via the correspondence $\lp D_1,D_2 \rp \leftrightarrow \lp F_{D_1}, F_{D_2} \rp$.
 \end{theorem}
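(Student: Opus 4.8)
```latex
The plan is to establish the bijective correspondence $\lp D_1,D_2 \rp \leftrightarrow \lp F_{D_1}, F_{D_2} \rp$ by exhibiting two well-defined maps that are mutually inverse, leveraging the machinery already set up. The forward direction is essentially the content of Remark~\ref{rem:comp_fat_graphs}: given an $(r,s)$-compatible pair $(D_1,D_2)$ of irreducible Type 1 actions, the correspondence $D_i \leftrightarrow F_{D_i}$ (from the Corollary following Theorem~\ref{thm:fg_auto}) produces irreducible fat graph automorphisms $F_{D_i} \in \text{Aut}(\Gamma_i)$, and Definition~\ref{def:fat_graph_pair} certifies that these form a compatible pair. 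So the first step is simply to verify that the construction $\lp F_1,F_2 \rp$ on $\lp \Gamma_1, \Gamma_2 \rp$ described in Remark~\ref{rem:comp_fat_graphs} does yield a genuine fat graph automorphism, and that this fat graph is precisely $\Gamma(\p_D)$ where $D = \lp D_1, D_2, (r,s)\rp$; in other words, the combinatorial gluing of the two fat graphs along the arcs $g_{ij}$ reproduces the polygon-with-side-pairing of the compatible action $D$.

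Next I would address well-definedness and injectivity of the forward map. Since each $F_{D_i}$ is determined by $D_i$ via the (bijective) correspondence of the earlier Corollary, and since the pairing of orbits in Definition~\ref{def:fat_graph_pair}(ii) exactly records which cone points of the $D_i$ participate in the $(r,s)$-compatibility, distinct compatible pairs $(D_1,D_2)$ yield distinct data $(F_{D_1}, F_{D_2})$ together with distinct identifications of the compatible orbits. The key point to check is that the isomorphism type of the glued automorphism $\lp F_1, F_2 \rp$ faithfully remembers the isotopy/conjugacy data of both pieces and the compatibility index $(r,s)$; this follows because the vertex orbits $O_1, O_2$ that are glued are intrinsically distinguished inside $\Gamma(\p_D)$ as the orbits created by the arc-subdivision, so one can recover $\Gamma_1, \Gamma_2$ and hence $D_1, D_2$ by reversing the cutting procedure.

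For surjectivity, I would run the construction in reverse. Given a compatible pair $(F_1, F_2)$ of irreducible fat graph automorphisms, the earlier Corollary hands back irreducible Type 1 actions $D_1, D_2$ with $F_i = F_{D_i}$, and Definition~\ref{def:fat_graph_pair}(ii) guarantees these form an $(r,s)$-compatible pair. Thus every compatible pair of fat graph automorphisms arises from a compatible pair of actions, and the two assignments invert one another by construction. The bulk of the argument is therefore bookkeeping: tracking how the cyclic orders $\sigma_0$ at the vertices of $\Gamma_1$ and $\Gamma_2$ interleave through the glued arcs $g_{ij}$, exactly as computed in Example~\ref{eg:fg_cpair}.

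The main obstacle, I expect, is verifying that $\lp F_1, F_2 \rp$ genuinely commutes with the glued structure $\sigma_0$ on $\lp \Gamma_1, \Gamma_2 \rp$ — that is, confirming the piecewise definition of $F$ on the interface arcs $g_{ij}$ is consistent with the new cyclic orders at the vertices $x$ produced by the interleaving in Remark~\ref{rem:comp_fat_graphs}(ii). The compatibility condition $c_{1,r} + c_{2,s} \equiv 0 \pmod m$ is what forces the local rotation angles to cancel ($\theta_1 + \theta_2 \equiv 0 \pmod{2\pi}$, as in Remark~\ref{rem:geom_comp_pair}), and one must check that this numerical cancellation is exactly what makes the arc-successor relations match up so that $F \circ \sigma_0 = \sigma_0 \circ F$. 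Once this local-consistency check is dispatched — a finite verification over the finitely many vertex orbits — the bijectivity is immediate, since both directions are manifestly inverse constructions mediated by the already-established correspondence $D \leftrightarrow F_D$.
```
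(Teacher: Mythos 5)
Your proposal is correct and follows essentially the same route as the paper, which offers no separate proof but derives the theorem directly from the gluing construction of Remark~\ref{rem:comp_fat_graphs} together with the correspondence $D \leftrightarrow F_D$ and Definition~\ref{def:fat_graph_pair} (which makes the reverse direction nearly tautological). Your identification of the key verification --- that the piecewise-defined $\lp F_1,F_2\rp$ commutes with the interleaved $\sigma_0$ on the glued graph, forced by the cancellation $c_{1,r}+c_{2,s}\equiv 0 \pmod m$ --- is exactly the point the paper dispatches with ``it is straightforward to see that $F$ is an automorphism of $\Gamma$.''
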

 
 \noindent By an inductive application of Theorems~\ref{thm:comp_fat_graphs} and~\ref{thm:arb_real}, we can obtain the following result. 
 
 \begin{theorem}
 Every cyclic action that decomposes into compatibilities between finitely many irreducible Type 1 actions determines a fat graph automorphism. 
 \end{theorem}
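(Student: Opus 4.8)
The plan is to induct on the number $N$ of compatibility operations appearing in the decomposition of the action guaranteed by \thmref{thm:arb_real}. For the base case $N=0$, the action is a single irreducible Type 1 action $D$; by \thmref{main} it is realized as the rotation $\theta_D$ of the polygon $\p_D$, and hence by \thmref{thm:fg_auto} (in the refined form of the Corollary following it) it corresponds to the irreducible fat graph automorphism $F_D \in \text{Aut}(\Gamma(\p_D))$, where $\Gamma(\p_D)$ has a single boundary component. This establishes the base of the induction, and in particular handles every irreducible Type 1 action.

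For the inductive step, suppose the action $D$ is obtained from actions carrying strictly fewer than $N$ compatibilities by applying one of the four operations of \thmref{thm:arb_real}. By the inductive hypothesis, each constituent action already corresponds to an automorphism of a fat graph with one boundary component. When the final operation is an $(r,s)$-compatible pairing $\lp D_1, D_2, (r,s)\rp$, \thmref{thm:comp_fat_graphs} together with the explicit gluing of \remref{rem:comp_fat_graphs} produces the fat graph $\lp \Gamma_1, \Gamma_2\rp$ and the automorphism $\lp F_{D_1}, F_{D_2}\rp$ realizing $D$. For the remaining operations $\l D, (r,s)\r$, $\l D \r$, and $\lp D_1, D_2 \rp$, I would carry out the entirely analogous gluing: removing the invariant disks around the compatible orbits corresponds to cutting the polygon(s) open along boundary arcs $g_{ij}$ exactly as in \remref{rem:comp_fat_graphs}, and the identification of the resulting boundary circles (with the appropriate rotation, or with no rotation in the trivial cases) dictates how these arcs are reattached to form the new fat graph. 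The induced map $F$ is defined to act by $F_1$ (respectively $F_2$) on the edges inherited from $\Gamma_1$ (respectively $\Gamma_2$) and compatibly on the connecting arcs, following the prescription of \remref{rem:comp_fat_graphs}.

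At each stage I would verify three points. First, $F$ commutes with both $\sigma_1$ and $\sigma_0$: commutation with $\sigma_1$ is immediate since $F$ is orientation preserving and sends the inverse of each edge to the inverse of its image, while commutation with $\sigma_0$ follows because the compatibility condition $\theta_1 + \theta_2 \equiv 0 \pmod{2\pi}$ forces the cyclic orders at the newly formed vertices to be respected by $F$. Second, I would track the orbits of $\sigma_1\sigma_0^{-1}$ to confirm that the glued fat graph retains exactly one boundary component, which is precisely what permits \thmref{thm:fg_auto} to be invoked and guarantees that the resulting object is genuinely a fat graph automorphism of the required type. Third, a routine Euler characteristic computation (using $v-e = 1-2g$) confirms that the genus of the glued fat graph equals $g(D)$, matching the genus recorded by the data set after the corresponding compatibility.

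The main obstacle is the second and third points for the self-compatibility and trivial-compatibility operations, which the excerpt only treats explicitly for the $(r,s)$-compatible pair in \remref{rem:comp_fat_graphs}. Unlike that case, gluing within a single fat graph (self-compatibility) can a priori split or merge boundary components, so one must verify carefully that the number of orbits of $\sigma_1\sigma_0^{-1}$ remains one after the identification. I expect this to reduce, as in \lemref{lg}, to a Burnside-type count of the vertex orbits under the relevant cyclic subgroup, after which the single-boundary-component property and the genus count follow together. Once these bookkeeping verifications are in place for all four operation types, the induction closes, and every action decomposing into finitely many compatibilities of irreducible Type 1 actions is realized as an automorphism of a fat graph with one boundary component.
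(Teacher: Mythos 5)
Your proposal is correct and follows essentially the same route as the paper, whose proof is precisely the inductive application of Theorem~\ref{thm:arb_real} (to decompose the action) and Theorem~\ref{thm:comp_fat_graphs} together with Remark~\ref{rem:comp_fat_graphs} (to glue the corresponding irreducible fat graph automorphisms); you have simply filled in the bookkeeping the paper leaves implicit. Note only that the statement does not require the resulting fat graph to have a single boundary component, so your second verification point, while harmless, is not needed to close the induction.
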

 
\section{Some applications of the geometric realizations}\label{sec:2}
In this section, we derive some applications of the theory we have developed in Section~\ref{sec:1}. 
\subsection{Maximal reduction systems} 
In this subsection, we will apply our realizations to derive the size of maximal reduction systems of Type 1 actions and Type 2 compatible pairs, which are the essential building blocks of arbitrary non-rotational cyclic actions. 

\begin{theorem}\label{thm:size_max_red_sys}
Let $h$ be a cyclic action on $S_g$.
\begin{enumerate}[(i)]
\item If $D$ is of Type 1, then there exists a maximal reduction system $\C$ for $D$ such that
$$|\C|  = 
\begin{cases}
n(3g_0(D)-1), & \text{if } g_0(D)>1, \text{ and} \\
2n, & \text{if } g_0(D)=1.
\end{cases}$$
\item If $D$ is a Type 2 actions realizable as a compatible pair $(D_1,D_2)$ of Type 1 actions $D_i$ on $S_{g_i}$, then there exists a maximal reduction system $\C$ for $D$ such that
$$\small |\C| = \begin{cases}
n(3g_0(D_1)+3g_0(D_2)-2)+k, & \text { if }g_0(D_1),g_0(D_2)>1 ,  \\
n(3g_0(D_1)-1)+k+2n, &\text{ if }g_0(D_1)>1,g_0(D_2)=1,\\
n(3g_0(D_2)-1)+k+2n, &\text{ if }g_0(D_2)>1,g_0(D_1)=1,\\
n(3g_0(D_2)-1)+k, &\text{ if }g_0(D_2)>1,g_0(D_1)=0,\\
n(3g_0(D_1)-1)+k, &\text{ if }g_0(D_1)>1,g_0(D_2)=0,\text{ and}\\
k, & \text{otherwise,}
\end{cases}$$
where $k = g -g_1-g_2+1$.
\end{enumerate}
\end{theorem}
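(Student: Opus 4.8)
The plan is to construct explicit maximal reduction systems directly from the geometric realizations of Section~\ref{sec:1} and then read off their cardinalities. Throughout, I take a maximal reduction system to be a maximal collection of disjoint, pairwise non-isotopic essential simple closed curves, invariant as a set under the action, whose complementary pieces (after capping) carry irreducible sub-actions in the sense of Gilman, i.e.\ quotients that are spheres with three cone points. Since such a system is an invariant lift of an orbifold pants decomposition of $\mathcal{O}_D$, its cardinality is a topological invariant of $D$, so it suffices to exhibit one system of each claimed size.

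For part (i), I would invoke Theorem~\ref{thm:real_type2} to write a Type 1 action with $g_0(D)=g'>0$ as $\l D_0, g'\r$ for an irreducible Type 1 action $D_0$. Realizing this as in Remark~\ref{rem:triv_self_comp}, the surface consists of a central piece carrying $D_0$ with $n$ copies of $S_{g',1}$ attached along a regular orbit of size $n$ and cyclically permuted. I would let $\C$ consist of the $n$ attaching circles together with an invariant pants decomposition of the handles: choose a pants decomposition of one handle $S_{g',1}$ (which has $3g'-2$ interior curves), and transport it to the remaining handles by the action. Cutting along $\C$ yields the central irreducible piece together with $n(2g'-1)$ freely permuted pairs of pants, so $\C$ is maximal, and counting gives $n(3g'-2)+n = n(3g_0(D)-1)$; when $g_0(D)=1$ each handle is a one-holed torus contributing a single interior curve, recovering $2n$. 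The two routine verifications are the $F$-invariance of the transported decomposition and the essentiality and pairwise non-isotopy of the chosen curves.

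For part (ii), I would realize $D=\lp D_1,D_2,(r,s)\rp$ as in Remark~\ref{rem:geom_comp_pair}, gluing $S_{g(D_1)}$ and $S_{g(D_2)}$ along the compatible orbit of size $k=n/n_{1,r}$, and set $\C=\C_1\cup\C_2\cup\{\text{the }k\text{ gluing circles}\}$, where $\C_i$ is the system built in part (i) for $D_i$ (empty when $g_0(D_i)=0$). When at least one $g_0(D_i)>1$, the two individual systems and the gluing circles remain essential, pairwise non-isotopic, and jointly maximal in the glued surface, so their cardinalities add: $|\C|=|\C_1|+|\C_2|+k$. Substituting the part (i) values (including $|\C_i|=2n$ when $g_0(D_i)=1$ is paired with $g_0(D_{3-i})>1$) yields the first five cases exactly.

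The main obstacle is the degenerate ``otherwise'' case, in which both $g_0(D_i)\le 1$. Here the glued surface is simple enough that the curves coming from a genus-one handle piece no longer extend the $k$ gluing circles to a strictly larger \emph{valid} reduction system: after the identification they either fail to remain essential and pairwise non-isotopic, or their complementary pieces cease to be reduced, so that a maximal system is furnished by the $k$ gluing circles alone and $|\C|=k$. Establishing this cleanly requires a delicate case-by-case check of essentiality and maximality after the gluing---in particular distinguishing it from the superficially similar situation where one $g_0(D_i)>1$, in which the handle curves \emph{do} survive---and this is the step I expect to demand the most care. I anticipate that the bookkeeping is most transparently organized by lifting a fixed orbifold pants decomposition of $\mathcal{O}_D$ and computing, for each downstairs curve, the order of its holonomy in $C_n$ to determine its number of lifts, which simultaneously confirms the counts in all six cases.
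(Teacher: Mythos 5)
Your overall strategy is the same as the paper's (the paper's proof is a three-sentence sketch): write a Type~1 action as $\l D_0, g_0\r$ via Theorem~\ref{lem:sph_act_to_g} and Remark~\ref{rem:triv_self_comp}, take the $n$ attaching circles together with an invariant pants decomposition of the $n$ freely permuted copies of $S_{g_0,1}$ (each contributing $3g_0-2$ interior curves), and for part (ii) add the $k$ gluing circles of Remark~\ref{rem:geom_comp_pair} to the union of the two systems. Your count $n(3g_0-2)+n=n(3g_0-1)$, the $2n$ specialization at $g_0=1$, and the additivity $|\C|=|\C_1|+|\C_2|+k$ reproduce the first five cases exactly as the paper intends. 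One side remark is false, however: the cardinality of a maximal reduction system is \emph{not} a topological invariant of $D$ (different orbifold pants decompositions of $\mathcal{O}_D$ lift to different numbers of curves upstairs, since the lift count of each curve is $n$ divided by the order of its holonomy). This does not damage your construction, because the theorem only asserts existence, but it should be deleted.

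The genuine gap is exactly where you place it, in the ``otherwise'' case, and your proposed resolution does not work. The remaining subcases are $(g_0(D_1),g_0(D_2))\in\{(0,0),(0,1),(1,0),(1,1)\}$; only $(0,0)$ is consistent with $|\C|=k$. In the realization of $\lp D_1,D_2,(r,s)\rp$, the compatible orbit consists of cone points lying on the central irreducible piece of each $S_{g_i}$, whereas the genus-one handles of a factor with $g_0(D_i)=1$ are attached along a \emph{free} orbit of size $n$ and are disjoint from the gluing region. Hence the $2n$ curves of $\C_i$ remain disjoint from the $k$ gluing circles, stay essential and pairwise non-isotopic after the identification, and the complementary piece containing them is still reducible (its quotient is a torus with cone points and boundary, not a sphere with three cone points). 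So the $k$ gluing circles alone can never be a \emph{maximal} reduction system when some $g_0(D_i)=1$: your own additivity forces $2n+k$ for $(1,0)$ and $(0,1)$ and $4n+k$ for $(1,1)$, in conflict with the stated formula. You cannot escape this by arguing the handle curves ``fail to remain essential,'' since nothing in the gluing touches them. Either the case division in the statement is incomplete (the paper's own one-line justification of (ii) --- additivity plus Remark~\ref{rem:geom_comp_pair} --- yields $2n+k$ and $4n+k$ in these subcases, not $k$) or a genuinely different construction is required there; your proposal supplies neither, so as written it does not prove the statement in the ``otherwise'' case.
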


\begin{proof}
It follows from Theorem~\ref{lem:sph_act_to_g} and Remark~\ref{rem:triv_self_comp} that $\tilde D$ can be realized as $\l D,g\r$ type, where $D$ is a Type 1 action with $g_0(D)=0$. Part (i) now follows from the fact that $D$ is irreducible~\cite{JG3}, and the fact that a pants decomposition for $S_{g_0(\tilde D),1}$ comprises $3g_0(\tilde D)-2$ nonisotopic curves, when $g_0(\tilde D)>1$. Finally, (ii) is a direct application of (i) and  Remark~\ref{rem:geom_comp_pair}.
\end{proof}

\noindent We conclude this subsection by pointing out that by an inductive application of Theorem~\ref{thm:arb_real}, one can generalize Theorem~\ref{thm:size_max_red_sys} to arbitrary Type 2 actions. 

\subsection{Symplectic representations of cyclic actions}
 In this subsection, we will apply the realizations obtained in Section~\ref{sec:1} to describe a procedure for deriving the image of a non-rotational cyclic action under the symplectic representation
$\Psi: \text{Mod}(S_g) \to \text{Sp}(2g; \mathbb{Z}).$ We will later extend these results to obtain the representations of the roots of Dehn twists.

Consider a polygon $\p = \p_{2k}$ with a reduced boundary word $W = W(\p)$, when read in the counter-clockwise sense. A direct application of the methods detailed in~\cite[Chapter 3]{GR} shows that $W(\p)$ is of the form $$Q a R b S a^{-1} T b^{-1} U,$$ for some words $Q, R, S, T,U$ (possibly empty), and letters $a,b$. Then we have the following proposition. 

\begin{proposition}\label{prop:normal}
Let $W(\p) =QaRbSa^{-1}Tb^{-1}U$. Suppose that $\p'$ is the polygon with $W(\p') = QTSRUxyx^{-1}y^{-1}$ obtained by applying the handle normalization algorithm once to $\p$. Then $x$ and $y$ are homotopically equivalent to $QTb^{-1}U$ and $U^{-1}R^{-1}a^{-1}Tb^{-1}U$, respectively. 
\end{proposition}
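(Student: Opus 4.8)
The plan is to realize this single handle normalization as a composite of two elementary cut-and-paste moves on the disk $\p$ and to track, through these moves, the two diagonals that become the handle generators $x$ and $y$. The computational engine is the elementary observation that, since $\p$ is a topological disk, any diagonal joining two boundary vertices $P$ and $P'$ is homotopic rel $\{P,P'\}$ to each of the two boundary arcs of $\p$ joining $P$ to $P'$; hence, once the side-pairing identifications are imposed, every edge created by a cut becomes a loop whose class in $\pi_1$ is read off directly as a subword of the boundary word. First I would fix vertices by writing the boundary as $P_0\xrightarrow{Q}P_1\xrightarrow{a}P_2\xrightarrow{R}P_3\xrightarrow{b}P_4\xrightarrow{S}P_5\xrightarrow{a^{-1}}P_6\xrightarrow{T}P_7\xrightarrow{b^{-1}}P_8\xrightarrow{U}P_0$, and record the vertex identifications forced by the pairing, namely $P_1\sim P_6$, $P_2\sim P_5$ (from $a$) and $P_3\sim P_8$, $P_4\sim P_7$ (from $b$). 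These identifications are exactly what allow the arc-subwords to close up into based loops at $P_0$, matching the fact that both $QTb^{-1}U$ and $U^{-1}R^{-1}a^{-1}Tb^{-1}U$ are loops at $P_0$.

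Next I would carry out the move that unlinks the pair $\{a,a^{-1}\}$: cut $\p$ along the diagonal $c$ from $P_2$ to $P_6$ and reglue the two copies of the edge $a$. A direct splitting into the two pieces $RbSa^{-1}c^{-1}$ and $Tb^{-1}UQac$, followed by gluing along $a$, produces the intermediate polygon $\p_1$ with boundary word $c\,T\,b^{-1}\,U\,Q\,c^{-1}\,R\,b\,S$, in which the residual handle is now carried by the linked pair $\{c,b\}$. Applying the engine lemma to $c$ (reading the arc $P_2\to P_6$) gives $c\simeq RbSa^{-1}$, and the complementary arc gives the equivalent representative $c\simeq a^{-1}Q^{-1}U^{-1}bT^{-1}$; I would retain whichever representative is dictated by the orientation conventions. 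This step both fixes the recipe for reading a diagonal and supplies the polygon on which the second move acts.

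I would then normalize the linked pair $\{c,b\}$ in $\p_1$ by the analogous cut-and-paste, gathering the four edges into an adjacent commutator and reordering the intervening words to obtain $\p'$ with boundary word $QTSRU\,xyx^{-1}y^{-1}$ (the cyclic representative $QTSRU$ agreeing with a rotation of the order $RUQTS$ produced by the second reduction). The two handle generators $x$ and $y$ of the final commutator are the diagonals surviving the normalization; by the engine lemma each is homotopic to the boundary arc it subtends, which I re-express in the original letters by substituting the class of $c$ computed above and closing the arcs into loops based at $P_0$ via the identifications $P_1\sim P_6$, $P_2\sim P_5$, $P_3\sim P_8$, $P_4\sim P_7$. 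Reading off these arcs should yield exactly $x\simeq QTb^{-1}U$ and $y\simeq U^{-1}R^{-1}a^{-1}Tb^{-1}U$. As a consistency check I would verify that $\{x,y\}$ has algebraic self-intersection $\pm1$ and that $xyx^{-1}y^{-1}$ is conjugate to the original handle subword, which pins the pair down up to the expected basepoint ambiguity.

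The main obstacle is the orientation and arc-selection bookkeeping across the two moves. At each cut one must choose the correct one of the two complementary boundary arcs and the correct direction of traversal so that the resulting loops are based consistently at $P_0$, and the second diagonal is naturally a chord of the intermediate polygon $\p_1$, so its class must be pulled back through the regluing along $a$ and rewritten in terms of $a,b,Q,R,S,T,U$ rather than $c$. Arranging the conventions so that the arcs read precisely as $QTb^{-1}U$ and $U^{-1}R^{-1}a^{-1}Tb^{-1}U$, rather than as their inverses or complementary-arc variants, is where the care lies; the vertex identifications recorded at the outset are the tool that makes this unambiguous.
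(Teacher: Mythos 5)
Your overall strategy --- performing explicit cut-and-paste moves on the disk and reading each new diagonal off as a boundary subword via the observation that a chord is homotopic rel endpoints to either complementary boundary arc --- is exactly the paper's strategy, but your accounting of the moves contains a genuine gap that affects the conclusion. Each elementary cut-and-paste trades exactly one identified edge pair for one new one, so a two-move normalization starting from the pairs $\{a,b\}$ can produce at most one fresh letter beyond your intermediate letter $c$: if your second step really is a single cut (introducing $d$) and a single paste (along $b$ or along $c$), then one of the two commutator letters in the final word must be $c^{\pm 1}$ or $b^{\pm 1}$. That is incompatible with the statement: with $c\simeq RbSa^{-1}$ and the boundary relation $[Q]+[R]+[S]+[T]+[U]=0$ in $H_1$, one finds $[c]=-[QTb^{-1}U]-[a]$, and neither $\pm[c]$ nor $\pm[b]$ equals $[QTb^{-1}U]$ or $[U^{-1}R^{-1}a^{-1}Tb^{-1}U]$ in general. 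So either your second ``move'' is secretly two cuts (making three in total, as in the paper, which introduces $x$, then $y$, then $z$ and renames $z$), or it produces generators other than the ones claimed. In either case you have not specified which diagonals are cut in the second stage, and the homotopy classes of the final generators depend on precisely that choice; note also that your first diagonal (from the terminal vertex of $a$ to the terminal vertex of $a^{-1}$) is not the paper's (which joins the endpoint of $a$ to the common vertex of $U$ and $Q$, giving the intermediate word $x^{-1}RbSxQTb^{-1}U$ rather than your $cTb^{-1}UQc^{-1}RbS$), so the stated answer cannot simply be imported.

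The second, related gap is that identifying $x$ and $y$ is the entire content of the proposition, and your proposal defers it to ``reading off these arcs should yield exactly\dots''. Your engine lemma handles chords joining boundary vertices of the original disk, but in the later stages the relevant diagonals of the intermediate polygons meet the already-reglued edges $a$ and $b$ in interior points: in the paper's computation $a$ is subdivided into $a_1,a_2,a_3$ and $b$ into $b_1,b_2$, and the formulas $QTb^{-1}U$ and $U^{-1}R^{-1}a^{-1}Tb^{-1}U$ come from sliding these subdivision points and using the null-homotopic disks bounded by $QTb_2^{-1}y^{-1}$ and by $za_2$. Your closing consistency checks cannot substitute for this bookkeeping: the conditions that $x$ and $y$ have algebraic intersection number $\pm 1$ and that $xyx^{-1}y^{-1}$ be conjugate to the original handle subword are both preserved by $(x,y)\mapsto(x,xy)$, so they do not determine the pair. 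To complete the argument you would need to exhibit every diagonal explicitly and track the subdivided segments through each regluing, at which point you would essentially be reproducing the paper's three-step computation.
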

\begin{proof}
In the following three steps, we obtain the reduced word $W'$ equivalent to $QTSRUxyx^{-1}y^{-1}$. Furthermore, we would like to express the new letters $x,y$ in terms of the letters in the word $W$. 

\noindent \textbf{Step 1.} Let $x$ denote the diagonal of the polygon $\p$ joining the end point of $a$ and the common vertex of $Q$ and $U$.  We cut $QaRbSa^{-1}Tb^{-1}U$ along the diagonal $x$  and obtain two polygons $QaRbSx$ and $x^{-1}a^{-1}Tb^{-1}U$ (see Figure~\ref{fig:4}). Next, we glue the polygons along $a$ to get $x^{-1} R b S x Q T b^{-1} U$ (see Figure~\ref{fig:5}). 

\tikzset{->-/.style={decoration={
  markings,
  mark=at position .5 with {\arrow{>}}},postaction={decorate}}}
 
\begin{figure}[H]
\begin{center}
\begin{tikzpicture}
\begin{scope}[every node/.style={sloped,allow upside down}]
\draw [dotted] (2, 0) -- (1.61, 1.17); \draw [->-] (0.62, 1.9)--(1.61, 1.17); \draw [dotted] (0.62,1.9)--(-0.62, 1.9); \draw [->-] (-1.61, 1.17) -- (-0.62, 1.9); \draw [dotted] (-1.41, -1.41) -- (-2, 0) -- (-1.61, 1.17); \draw [->-] (-1.41, -1.41) -- (0, -2); \draw [dotted] (0, -2) -- (1.41, -1.41); \draw [->-] (1.41, -1.41) -- (2, 0); \draw [->-, red] (2, 0) -- (-2, 0);

\draw [fill,red] (-2,0) circle [radius=0.03]; \draw [fill,red] (2,0) circle [radius=0.03]; \draw [fill,red] (1.61, 1.17) circle [radius=0.03]; \draw [fill,red] (-1.61, 1.17) circle [radius=0.03]; \draw [fill,red] (0.62,1.9) circle [radius=0.03]; \draw [fill,red] (-0.62,1.9) circle [radius=0.03]; \draw [fill,red] (1.41, -1.41) circle [radius=0.03]; \draw [fill,red] (0, -2) circle [radius=0.03]; \draw [fill,red] (-1.41, -1.41) circle [radius=0.03];

\draw (-2, 0.5) node {$Q$}; \draw (2, 0.5) node {$S$}; \draw (1.25, 1.65) node {$b$}; \draw (-1.25, 1.65) node {$a$}; \draw (0,2.1) node {$R$}; \draw (0, 0) node [above] {$x$};\draw (1.85, -0.8) node {$a$}; \draw (-1.9, -0.8) node {$U$}; \draw (0.8, -1.9) node {$T$}; \draw (-0.8, -1.9) node {$b$}; 

\draw [dotted] (3, 0) -- (4, 1);  \draw [->-]  (5.5,0.75) -- (4, 1); \draw [->-] (5.5,-0.75) -- (5.5, 0.75); \draw  [dotted] (4, -1) -- (5.5,-0.75); \draw [->-] (3, 0) -- (4, -1); 

\draw [fill,red] (3,0) circle [radius=0.03]; \draw [fill,red] (4,1) circle [radius=0.03];\draw [fill,red] (5.5,0.75) circle [radius=0.03]; \draw [fill,red] (5.5,-0.75) circle [radius=0.03]; \draw [fill,red] (4,-1) circle [radius=0.03];

\draw (3.3, 0.6) node {$U$}; \draw (4.7, 1.1) node {$x$};\draw (5.5, 0) node [left] {$a$};  \draw (4.7, -1.1) node {$T$}; \draw (3.3, -0.6) node {$b$};

\draw [->-] (6,-0.75) -- (6, 0.75); \draw [dotted] (6,0.75) -- (7,2); \draw [->-] (7,2)--(8,0.75); \draw [dotted] (8,0.75) -- (8, -0.75); \draw [->-] (8, -0.75) -- (7, -2); \draw [dotted] (7, -2) -- (6, -0.75); 

\draw [fill,red] (6,-0.75) circle [radius=0.03]; \draw [fill,red] (6,0.75) circle [radius=0.03];\draw [fill,red] (8,0.75) circle [radius=0.03]; \draw [fill,red] (8,-0.75) circle [radius=0.03]; \draw [fill,red] (7,2) circle [radius=0.03]; \draw [fill,red] (7,-2) circle [radius=0.03];

\draw (6, 0) node [right] {$a$}; \draw (8, 0) node [right] {$S$}; \draw (6.3, 1.4) node {$R$}; \draw (7.7, 1.4) node {$b$}; \draw (6.3, -1.4) node {$Q$}; \draw (7.7, -1.4) node {$x$};
\draw [->, green] (2.2,0) -- (2.7,0);
\end{scope}
\end{tikzpicture}
\caption{Cut the polygon $QaRbSa^{-1}Tb^{-1}U$ (left) along the diagonal $x$ and get two polygons $RbSxQa$ (right) and $a^{-1}Tb^{-1}Ux^{-1}$ (middle).} 
\label{fig:4}
\end{center}
\end{figure}
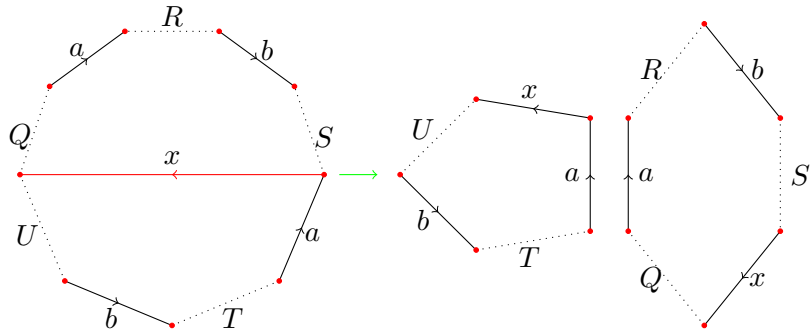

\noindent In the new polygon, the image of $a$ is described in Figure~\ref{fig:5}.
\begin{figure}[H]
\begin{center}
\begin{tikzpicture}
\begin{scope}[every node/.style={sloped,allow upside down}]
\draw [dotted] (2, 0) -- (1.61, 1.17); \draw [->-] (0.62, 1.9)--(1.61, 1.17); \draw [dotted] (0.62,1.9)--(-0.62, 1.9); \draw [->-] (-0.62, 1.9)--(-1.61, 1.17); \draw [->-] (-2, 0) -- (-1.41, -1.41); \draw [dotted] (-2, 0) -- (-1.61, 1.17); \draw [dotted] (-1.41, -1.41) -- (0, -2); \draw [dotted] (0, -2) -- (1.41, -1.41); \draw [->-] (2,0) -- (1.41, -1.41); \draw [->-, green] (0,-2) -- (-0.62, 1.9);

\draw [fill,red] (-2,0) circle [radius=0.03]; \draw [fill,red] (2,0) circle [radius=0.03]; \draw [fill,red] (1.61, 1.17) circle [radius=0.03]; \draw [fill,red] (-1.61, 1.17) circle [radius=0.03]; \draw [fill,red] (0.62,1.9) circle [radius=0.03]; \draw [fill,red] (-0.62,1.9) circle [radius=0.03]; \draw [fill,red] (1.41, -1.41) circle [radius=0.03]; \draw [fill,red] (0, -2) circle [radius=0.03]; \draw [fill,red] (-1.41, -1.41) circle [radius=0.03];

\draw (-2, 0.5) node {$U$}; \draw (2, 0.5) node {$S$}; \draw (1.25, 1.65) node {$b$}; \draw (-1.25, 1.65) node {$x$}; \draw (0,2.1) node {$R$}; \draw [green] (0, 0) node {$a$};\draw (1.85, -0.8) node {$x$}; \draw (-1.9, -0.8) node {$b$}; \draw (0.8, -1.9) node {$Q$}; \draw (-0.8, -1.9) node {$T$}; 
\end{scope}
\end{tikzpicture} 
\caption{The polygon obtained by attaching the two polygons in Figure~\ref{fig:4} (right) along the sides labelled by $a$.}
\label{fig:5}
\end{center}
\end{figure}
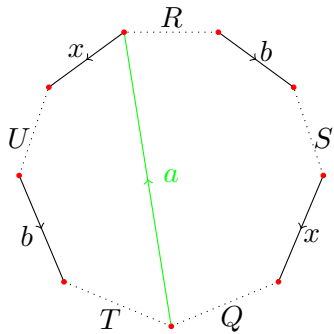

\noindent \textbf{Step 2.} In this step, we cut the polygon obtained at the end of Step 1 along the diagonal $y$ (see Figure~\ref{fig:6}, left) and then paste along $b$ to obtain $x^{-1}RUy^{-1}QTSxy$ (Figure~\ref{fig:7}). 
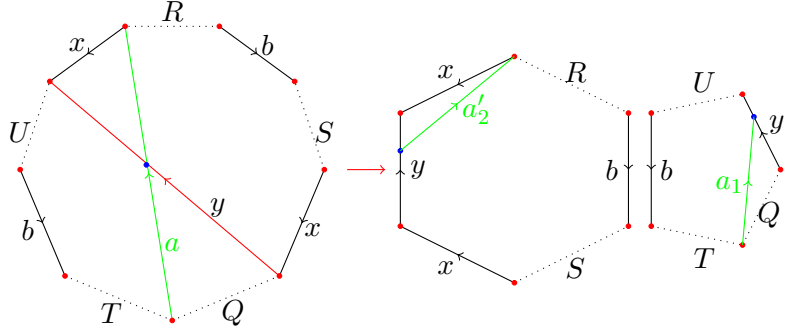
\begin{figure}[H]
\begin{center}
\begin{tikzpicture}
\begin{scope}[every node/.style={sloped,allow upside down}]
\draw [dotted] (2, 0) -- (1.61, 1.17); \draw [->-] (0.62, 1.9)--(1.61, 1.17); \draw [dotted] (0.62,1.9)--(-0.62, 1.9); \draw [->-] (-0.62, 1.9)--(-1.61, 1.17); \draw [->-] (-2, 0) -- (-1.41, -1.41); \draw [dotted] (-2, 0) -- (-1.61, 1.17); \draw [dotted] (-1.41, -1.41) -- (0, -2); \draw [dotted] (0, -2) -- (1.41, -1.41); \draw [->-] (2,0) -- (1.41, -1.41); \draw [->-, green] (0,-2) -- (-0.62, 1.9);
\draw [->-, red] (1.41, -1.41) -- (-1.61, 1.17);\draw [fill,blue] (-0.34, 0.063) circle [radius=0.03];

\draw [fill,red] (-2,0) circle [radius=0.03]; \draw [fill,red] (2,0) circle [radius=0.03]; \draw [fill,red] (1.61, 1.17) circle [radius=0.03]; \draw [fill,red] (-1.61, 1.17) circle [radius=0.03]; \draw [fill,red] (0.62,1.9) circle [radius=0.03]; \draw [fill,red] (-0.62,1.9) circle [radius=0.03]; \draw [fill,red] (1.41, -1.41) circle [radius=0.03]; \draw [fill,red] (0, -2) circle [radius=0.03]; \draw [fill,red] (-1.41, -1.41) circle [radius=0.03];

\draw (-2, 0.5) node {$U$}; \draw (2, 0.5) node {$S$}; \draw (1.25, 1.65) node {$b$}; \draw (-1.25, 1.65) node {$x$}; \draw (0,2.1) node {$R$}; \draw [green] (0, -1) node {$a$};\draw (1.85, -0.8) node {$x$}; \draw (-1.9, -0.8) node {$b$}; \draw (0.8, -1.9) node {$Q$}; \draw (-0.8, -1.9) node {$T$}; \draw (0.6, -0.5) node {$y$}; 
\draw [->, red] (2.3, 0) -- (2.8,0);

\draw [->-] (3, -0.75)--(3, 0.75); \draw [->-] (6, 0.75) -- (6,-0.75); \draw [dotted] (4.5,1.5) -- (6, 0.75);  \draw [->-] (4.5,1.5) -- (3, 0.75); \draw [dotted] (4.5,-1.5) -- (6, -0.75);  \draw [->-] (4.5,-1.5) -- (3, -0.75); 
\draw [->-](6.3, 0.75) -- (6.3, -0.75); \draw [dotted] (6.3, 0.75)--(7.5,1); \draw [dotted] (6.3, -0.75)--(7.5,-1); \draw [->-] (8,0) -- (7.5, 1); \draw [dotted] (8,0) -- (7.5, -1); 

\draw (3, 0) node [right] {$y$}; \draw (3.6, -1.3) node {$x$};\draw (3.6, 1.3) node {$x$}; \draw (5.3, -1.3) node {$S$};\draw (5.3, 1.3) node {$R$}; \draw (6, 0) node [left] {$b$}; \draw (6.5, 0) node {$b$}; \draw (7, 0.9) node [above] {$U$}; \draw (7, -0.9) node[below] {$T$}; \draw (7.95, 0.6) node {$y$}; \draw (7.85, -0.6) node {$Q$};

\draw [fill,red] (3,0.75) circle [radius=0.03]; \draw [fill,red] (3,-0.75) circle [radius=0.03]; \draw [fill,red] (6, 0.75) circle [radius=0.03]; \draw [fill,red] (6, -0.75) circle [radius=0.03]; \draw [fill,red] (4.5,1.5) circle [radius=0.03]; \draw [fill,red] (4.5,-1.5) circle [radius=0.03]; \draw [fill,red] (6.3, 0.75) circle [radius=0.03]; \draw [fill,red] (6.3, -0.75) circle [radius=0.03]; \draw [fill,red] (7.5, 1) circle [radius=0.03]; \draw [fill,red] (7.5, -1) circle [radius=0.03]; \draw [fill,red] (8, 0) circle [radius=0.03];
\draw [fill,blue] (3, 0.25) circle [radius=0.03]; \draw [->-, green] (3, 0.25) -- (4.5, 1.5); \draw [green] (4, 0.8) node {$a'_2$};
\draw [fill,blue] (7.65, 0.7) circle [radius=0.03];
\draw [->-, green] (7.5, -1) -- (7.65, 0.7); \draw [green] (7.65, -0.2) node [left] {$a_1$};
\end{scope}

\end{tikzpicture}
\caption{Cutting the polygon in Figure~\ref{fig:5} along the diagonal $y$ yields two polygons (right).}
\label{fig:6}
\end{center}
\end{figure}

Note that, $y$ intersects $a$ at an interior point which divides $a$ into two segments $a_1, a'_2$. The images of $a_1, a'_2$ and $b$  are described in Figure~\ref{fig:7}. Note that $a=a_1*a_2'.$
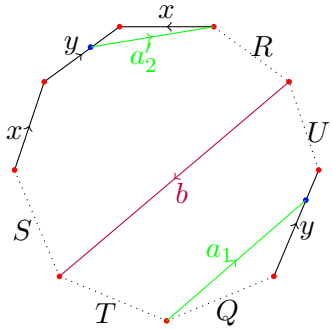
\begin{figure}[H]
\begin{center}
\begin{tikzpicture}
\begin{scope}[every node/.style={sloped,allow upside down}]
\draw [dotted] (2, 0) -- (1.61, 1.17); \draw [dotted] (0.62, 1.9)--(1.61, 1.17); \draw [->-] (0.62,1.9)--(-0.62, 1.9); \draw [->-] (-1.61, 1.17) -- (-0.62, 1.9); \draw [dotted] (-2, 0) -- (-1.41, -1.41); \draw [->-] (-2, 0) -- (-1.61, 1.17); \draw [dotted] (-1.41, -1.41) -- (0, -2); \draw [dotted] (0, -2) -- (1.41, -1.41); \draw [->-] (1.41, -1.41)--(2,0); 

\draw [fill,red] (-2,0) circle [radius=0.03]; \draw [fill,red] (2,0) circle [radius=0.03]; \draw [fill,red] (1.61, 1.17) circle [radius=0.03]; \draw [fill,red] (-1.61, 1.17) circle [radius=0.03]; \draw [fill,red] (0.62,1.9) circle [radius=0.03]; \draw [fill,red] (-0.62,1.9) circle [radius=0.03]; \draw [fill,red] (1.41, -1.41) circle [radius=0.03]; \draw [fill,red] (0, -2) circle [radius=0.03]; \draw [fill,red] (-1.41, -1.41) circle [radius=0.03];

\draw (-2, 0.5) node {$x$}; \draw (2, 0.5) node {$U$}; \draw (1.25, 1.65) node {$R$}; \draw (-1.25, 1.65) node {$y$}; \draw (0,2.1) node {$x$}; \draw (1.85, -0.8) node {$y$}; \draw (-1.9, -0.8) node {$S$}; \draw (0.8, -1.9) node {$Q$}; \draw (-0.8, -1.9) node {$T$}; 

\draw [->-, purple] (1.61, 1.17) -- (-1.41,-1.41); \draw [purple] (0.2, -0.3) node {$b$};
\draw [fill,blue] (1.83, -0.4) circle [radius=0.03]; \draw [->-, green] (0,-2) -- (1.83, -0.4); \draw [green] (0.7, -1.1) node {$a_1$}; \draw [fill,blue] (-1, 1.63) circle [radius=0.03]; \draw [->-, green] (-1, 1.63) -- (0.62,1.9); \draw [green] (-0.3, 1.5) node {$a'_2$};
\end{scope}

\end{tikzpicture} 
\caption{The polygon obtained by attaching the two polygons in Figure~\ref{fig:6} (right) along the sides labelled by $b$.}
\label{fig:7}
\end{center}
\end{figure}

\noindent \textbf{Step 3.} Finally, we cut the polygon $x^{-1} R U y^{-1} Q T S x y$ along the diagonal $z$ (see Figure~\ref{fig:8}, left) and obtain two polygons $x^{-1} R U z$ and $z^{-1} y^{-1} Q T S x y$. Next, we glue them along $x$ and obtain the polygon given by $Q T S R U zyz^{-1} y^{-1}$ (Figure~\ref{fig:9}).
\begin{figure}[H]
\begin{center}
\begin{tikzpicture}
\begin{scope}[every node/.style={sloped,allow upside down}]
\draw [dotted] (2, 0) -- (1.61, 1.17); \draw [dotted] (0.62, 1.9)--(1.61, 1.17); \draw [->-] (0.62,1.9)--(-0.62, 1.9); \draw [->-] (-1.61, 1.17) -- (-0.62, 1.9); \draw [dotted] (-2, 0) -- (-1.41, -1.41); \draw [->-] (-2, 0) -- (-1.61, 1.17); \draw [dotted] (-1.41, -1.41) -- (0, -2); \draw [dotted] (0, -2) -- (1.41, -1.41); \draw [->-] (1.41, -1.41)--(2,0); 

\draw [fill,red] (-2,0) circle [radius=0.03]; \draw [fill,red] (2,0) circle [radius=0.03]; \draw [fill,red] (1.61, 1.17) circle [radius=0.03]; \draw [fill,red] (-1.61, 1.17) circle [radius=0.03]; \draw [fill,red] (0.62,1.9) circle [radius=0.03]; \draw [fill,red] (-0.62,1.9) circle [radius=0.03]; \draw [fill,red] (1.41, -1.41) circle [radius=0.03]; \draw [fill,red] (0, -2) circle [radius=0.03]; \draw [fill,red] (-1.41, -1.41) circle [radius=0.03];
\draw [fill, brown] (-0.39,1.72) circle [radius=0.03];\draw [fill,black] (1.05,0.69) circle [radius=0.03];

\draw (-2, 0.5) node {$x$}; \draw (2, 0.5) node {$U$}; \draw (1.25, 1.65) node {$R$}; \draw (-1.25, 1.65) node {$y$}; \draw (0,2.1) node {$x$}; \draw (1.85, -0.8) node {$y$}; \draw (-1.9, -0.8) node {$S$}; \draw (0.8, -1.9) node {$Q$}; \draw (-0.8, -1.9) node {$T$}; 

\draw [->-, purple] (1.61, 1.17) -- (-1.41,-1.41); \draw [purple] (0.2, -0.3) node {$b$};
\draw [fill,blue] (1.83, -0.4) circle [radius=0.03]; \draw [->-, green] (0,-2) -- (1.83, -0.4); \draw [green] (0.7, -1.1) node {$a_1$}; \draw [fill,blue] (-1, 1.63) circle [radius=0.03]; \draw [->-, green] (-1, 1.63) -- (0.62,1.9); \draw [green] (0.2, 1.6) node {$a'_2$};

\draw [->-, red] (2,0) -- (-0.62, 1.9); \draw [red] (0.6, 0.8) node {$z$};
\draw [->, red] (2.2, -0.25) -- (2.6,-0.25);
\draw [dotted] (2.7,0.75) -- (2.7, -0.75) -- (4, -0.5); \draw [->-] (4, -0.5) -- (4, 0.5); \draw [->-] (2.7, 0.75) -- (4, 0.5); 

\draw [fill,red] (2.7, 0.75) circle [radius=0.03];\draw [fill,red] (2.7, -0.75) circle [radius=0.03];\draw [fill,red] (4, 0.5) circle [radius=0.03];\draw [fill,red] (4, -0.5) circle [radius=0.03];

\draw (2.7, 0) node [left] {$T$}; \draw (4, 0) node [right] {$x$}; \draw (3.7, 0.8) node [left] {$z$}; \draw (3.7, -0.85) node [left] {$Q$}; 

\draw [->-] (4.5, -0.5) -- (4.5, 0.5); \draw [->-] (4.5, 0.5) -- (6, 1); \draw [->-] (7.5, 0.5) -- (6,1); \draw [->-] (7.5, -0.5) -- (7.5, 0.5); \draw [dotted] (4.5, -0.5) -- (5.5, -1) -- (6.5, -1) -- (7.5, -0.5);
\draw [fill,red] (4.5, -0.5) circle [radius=0.03];\draw [fill,red] (4.5, 0.5) circle [radius=0.03];\draw [fill,red] (6, 1) circle [radius=0.03];\draw [fill,red] (7.5, 0.5) circle [radius=0.03]; \draw [fill,red] (7.5, -0.5) circle [radius=0.03];\draw [fill,red] (5.5, -1) circle [radius=0.03];\draw [fill,red] (6.5, -1) circle [radius=0.03];

\draw (4.5, 0) node [right] {$x$}; \draw (7.5, 0) node [right] {$y$}; \draw (5.4, 0.9) node [left] {$y$}; \draw (7, 0.9) node [left] {$z$}; \draw (6, -1.2) node {$T$}; \draw (5.1, -0.9) node [left] {$S$}; \draw (7.3, -0.95) node [left] {$Q$};

\draw [fill,blue] (7.5, 0.25) circle [radius=0.03]; \draw [->-, green] (6.5, -1) -- (7.5, 0.25); \draw [green] (7.2, -0.25) node [left] {$a_1$};

\draw [fill,blue] (7.1, 0.625) circle [radius=0.03]; \draw [->-,purple] (7.1, 0.625) -- (5.5, -1); \draw [purple] (6.3, -0.1) node [left] {$b_2$};

\draw [fill,blue] (5.4, 0.8) circle [radius=0.03]; \draw [fill,brown] (6.6, 0.8) circle [radius=0.03]; \draw [->-, green] (5.4, 0.8) -- (6.6, 0.8); \draw [green] (6,0.6) node {$a_2$};

\draw [fill,brown] (3.6, 0.57) circle [radius=0.03]; \draw [->-, green] (3.6, 0.57) -- (4, -0.5); \draw [green] (3.9, 0) node [left] {$a_3$};

\draw [fill, black] (3.15, 0.65) circle [radius=0.03]; \draw [->-, purple] (2.7, -0.75) -- (3.15, 0.65); \draw [purple] (2.9, 0) node [right] {$b_1$};
\end{scope}

\end{tikzpicture} 
\caption{Cut the polygon in Figure~\ref{fig:7} along the diagonal $z$ and get two polygons (right).}
\label{fig:8}
\end{center}
\end{figure}
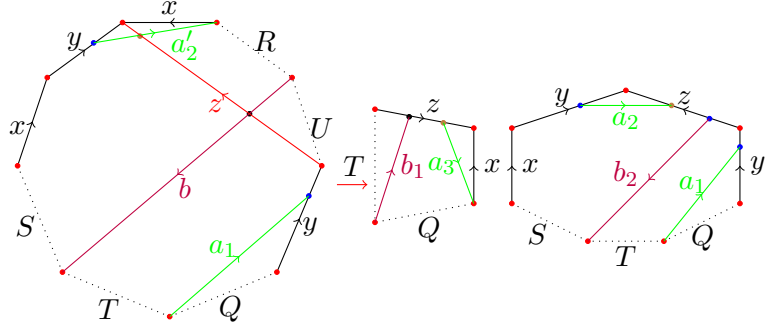

The diagonal $z$ intersects $a'_2$ at an interior point which divides $a_2'$ into segments $a_2, a_3$. Furthermore, $b$ splits into two segments $b_1$ and $b_2$.    
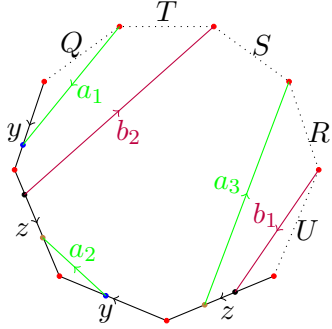
\begin{figure}[H]
\begin{center}
\begin{tikzpicture}
\begin{scope}[every node/.style={sloped,allow upside down}]
\draw [dotted] (2, 0) -- (1.61, 1.17); \draw [dotted] (0.62, 1.9)--(1.61, 1.17); \draw [dotted] (0.62,1.9)--(-0.62, 1.9); \draw [dotted] (-1.61, 1.17) -- (-0.62, 1.9); \draw [->-] (-2, 0) -- (-1.41, -1.41); \draw [->-] (-1.61, 1.17) -- (-2, 0); \draw [->-] (0, -2) -- (-1.41, -1.41); \draw [->-] (1.41, -1.41)-- (0, -2); \draw [dotted] (1.41, -1.41)--(2,0); 

\draw [fill,red] (-2,0) circle [radius=0.03]; \draw [fill,red] (2,0) circle [radius=0.03]; \draw [fill,red] (1.61, 1.17) circle [radius=0.03]; \draw [fill,red] (-1.61, 1.17) circle [radius=0.03]; \draw [fill,red] (0.62,1.9) circle [radius=0.03]; \draw [fill,red] (-0.62,1.9) circle [radius=0.03]; \draw [fill,red] (1.41, -1.41) circle [radius=0.03]; \draw [fill,red] (0, -2) circle [radius=0.03]; \draw [fill,red] (-1.41, -1.41) circle [radius=0.03];

\draw (-2, 0.5) node {$y$}; \draw (2, 0.5) node {$R$}; \draw (1.25, 1.65) node {$S$}; \draw (-1.25, 1.65) node {$Q$}; \draw (0,2.1) node {$T$}; \draw (1.85, -0.8) node {$U$}; \draw (-1.9, -0.8) node {$z$}; \draw (0.8, -1.9) node {$z$}; \draw (-0.8, -1.9) node {$y$}; 

\draw [fill,blue] (-1.89, 0.33) circle [radius=0.03]; \draw [->-, green] (-0.62, 1.9) -- (-1.89, 0.33); \draw [green] (-1, 1) node {$a_1$};  \draw [fill,brown] (-1.63, -0.9) circle [radius=0.03]; \draw [fill,blue] (-0.8, -1.67) circle [radius=0.03]; \draw [->-, green] (-0.8, -1.67) -- (-1.63, -0.9); \draw [green] (-1.1, -1.1) node {$a_2$}; \draw [fill,brown] (0.5, -1.79) circle [radius=0.03]; \draw [->-, green] (0.5, -1.79) -- (1.61, 1.17); \draw [green] (0.8, -0.2) node {$a_3$};

\draw [fill,black] (0.9, -1.62) circle [radius=0.03]; \draw [->-, purple] (2,0) -- (0.9, -1.62); \draw [purple] (1.3, -0.6) node {$b_1$};
\draw [fill,black] (-1.87, -0.33) circle [radius=0.03]; \draw [->-, purple] (-1.87, -0.33) -- (0.62, 1.9); \draw [purple] (-0.5, 0.5) node {$b_2$};
\end{scope}

\end{tikzpicture} 
\caption{The polygon obtained by attaching the two polygons in Figure~\ref{fig:8} (right) along the sides labelled by $z$.}
\label{fig:9}
\end{center}
\end{figure}
We can slide the final point of $b_1$ and the initial point of $b_2$ along the reverse direction of $z$ to the initial point of $z$ simultaneously by homotopy. Then $QTb_2^{-1} y^{-1}$ bound a disc in the surface and hence gives a word which is equivalent to identity. Thus we have 
\begin{eqnarray*}
y&=&QTb_2^{-1}=QTb_2^{-1}b_1^{-1}b_1 = QTb^{-1}b_1\\ \Rightarrow y&=&QTb^{-1}U
\end{eqnarray*}
Similarly, by homotopy, slide the final point of $a_1$ and the initial point of $a_2$ to the final point of $y$. Then we slide the final point of $a_2$ and the initial point in the reverse direction along $z$ to the initial point of $z$. The edges $z$ and $a_2$ bound a disc. Therefore we have, 
\begin{eqnarray*}
z&=&a_2^{-1}=a_3(a_3^{-1}a_2^{-1}a_1^{-1}) a_1=a_3a^{-1}a_1\\ &=& U^{-1}R^{-1} a^{-1}Tb^{-1}U.
\end{eqnarray*}
Therefore, by renaming the side $z$ by $x$, we have the proposition.
\end{proof}

\begin{notation}
Let $\p$ be a polygon with $S(\p) \approx S_g$. 
\begin{enumerate}[(a)]
\item We denote by $\N^i(\p)$, the polygon obtained from $\p$ after $i$ successive applications of the normalization procedure described in Proposition~\ref{prop:normal}.
\item We denote by $L(\p)$, the set of distinct letters in $W(\p)$. 
\item We denote by $\B(\p)$, the set of standard generators of $H_1(S_g;\mathbb{Z})$ expressed in terms of elements in $L(\p)$. 
\end{enumerate}
\end{notation} 

\noindent Let $W$ and $W'$ be as in Proposition~\ref{prop:normal}. Then the map $$\B(\p')\to \B(\p)\, :\, x \mapsto QTb^{-1}U, \, y \mapsto U^{-1}R^{-1}a^{-1}Tb^{-1}U,\, z \mapsto z,$$ for all $z\in \B(\p')\setminus \{x, y\}$, uniquely determines an isomorphism on $H_1(S_g;\mathbb{Z})$, which we denote by $f_{\p',\p}$. This brings us to the following lemma.

\begin{lemma}\label{lem:normalization}
Let $\p$ be a polygon with $S(\p) \approx S_g$. Then 
$$W(\N^g(\p))) = \displaystyle \prod_{i=1}^g [x_i,y_i],$$ and the mapping 
 $$\displaystyle f_\p= \prod_{i=1}^g f_{\N^i(\p),\,\N^{i-1}(\p)}$$ defines an isomorphism of the homology group $H_1(S_g;\mathbb{Z})$ such that $$\B(\p_g) \xmapsto{f_\p} \B(\p).$$ 
\end{lemma}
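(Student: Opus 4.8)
The plan is to induct on the genus $g$, using Proposition~\ref{prop:normal} as the single-step engine that simultaneously reduces the boundary word and records the change of homology basis, and to run the two assertions of the lemma in parallel. Throughout, I write $\N^0(\p) = \p$, and I track, at each stage $i$, both the shape of $W(\N^i(\p))$ and the images of the generators in $\B(\N^i(\p))$ under the accumulated map.

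For the normal form $W(\N^g(\p)) = \prod_{i=1}^g[x_i,y_i]$, I would first invoke the discussion preceding Proposition~\ref{prop:normal} (via \cite{GR}): since $S(\p) \approx S_g$ with $g \geq 1$, the reduced boundary word can always be written as $QaRbSa^{-1}Tb^{-1}U$ for some linked pair $a,b$. A single application of the normalization then produces $W(\N^1(\p)) = QTSRU\,x_1y_1x_1^{-1}y_1^{-1}$. The essential point is that the prefix $QTSRU$ is itself the boundary word of a polygon whose glued surface is $S_{g-1}$; I would verify this by an Euler-characteristic count, using that the three cut-and-paste moves in Proposition~\ref{prop:normal} are homeomorphisms of the glued surface, while the extracted commutator $[x_1,y_1]$ accounts for exactly one unit of genus.

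The crux is then to iterate cleanly. I would show that when the normalization is applied to a word of the form $W(\N^{i}(\p)) = W_i\,\prod_{j=1}^i[x_j,y_j]$, where $W_i$ is a reduced word in the remaining letters realizing $S_{g-i}$, the linked pair demanded by Proposition~\ref{prop:normal} can be chosen inside $W_i$, so that the already-formed commutators lie entirely within the tail $U$ and are carried along untouched; the move then appends a new commutator $[x_{i+1},y_{i+1}]$ after them, yielding $W(\N^{i+1}(\p)) = W_{i+1}\,\prod_{j=1}^{i+1}[x_j,y_j]$ with the residual prefix $W_{i+1}$ realizing $S_{g-i-1}$. After $g$ steps the genus is exhausted, forcing $W_g$ to be empty and giving the claimed standard form, whence $\N^g(\p)$ is the canonical $4g$-gon $\p_g$. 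I expect this iteration bookkeeping to be the main obstacle: one must establish that a linked pair persists in the non-normalized prefix at every stage until the genus is used up, and that the previously normalized commutators remain undisturbed and accumulate in order at the tail.

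For the homology assertion, each $f_{\N^i(\p),\N^{i-1}(\p)}$ is already an isomorphism of $H_1(S_g;\mathbb{Z})$ by the paragraph immediately preceding the lemma, so the composition $f_\p = \prod_{i=1}^g f_{\N^i(\p),\N^{i-1}(\p)}$ is an isomorphism automatically. It then remains only to identify the image of the basis: applying $f_{\N^g(\p),\N^{g-1}(\p)}$ rewrites each standard generator of $\B(\p_g) = \B(\N^g(\p))$ as a word in the letters of $\N^{g-1}(\p)$, and successively composing downward through $\N^{g-2}(\p), \dots, \N^0(\p) = \p$ expresses each of them entirely in $L(\p)$. By the substitution rule $x \mapsto QTb^{-1}U$, $y \mapsto U^{-1}R^{-1}a^{-1}Tb^{-1}U$ of Proposition~\ref{prop:normal} these are precisely the elements of $\B(\p)$, giving $\B(\p_g) \xmapsto{f_\p} \B(\p)$. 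Once the stability claim of the previous paragraph is in place, both the normal form and this homology tracking follow by routine composition.
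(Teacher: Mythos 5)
The paper states this lemma without proof, treating it as an immediate consequence of iterating Proposition~\ref{prop:normal} $g$ times, and your plan is exactly that intended argument: apply the handle-normalization step once per unit of genus, observe that each step is a homeomorphism of the glued surface, and compose the basis-change isomorphisms $f_{\N^i(\p),\N^{i-1}(\p)}$ (each already an isomorphism by the paragraph preceding the lemma) to get $f_\p$. So in approach you are aligned with the paper.

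There is, however, one step that you correctly identify as ``the main obstacle'' but do not actually discharge, and it is the only substantive mathematical content of the lemma: the existence of a linked pair $a,b$ in the residual prefix $W_i$ at every stage $i<g$, together with the claim that $W_i$ ``realizes $S_{g-i}$.'' For a $2k$-gon with an orientable pairing, the genus is governed by $\chi = V - k + 1$, where $V$ is the number of vertex classes, so the genus of the sub-word $W_i$ is not determined by its edge pairing alone --- it depends on the induced vertex identifications. In the classical normalization (the \cite{GR} argument the paper cites), one first cancels adjacent pairs $aa^{-1}$ and collapses all vertices to a single class; only then is the existence of a linked pair guaranteed (a letter $a$ with no letter linked to it would disconnect the single-vertex polygon), only then does the Euler-characteristic count for the prefix become automatic, and one must also check that the three cut-and-paste moves of Proposition~\ref{prop:normal} preserve the single-vertex-class property so that the induction can continue. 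Your proposal asserts the linked pair ``can be chosen inside $W_i$'' and that the commutators ``remain undisturbed,'' but without the vertex-class reduction neither claim is justified as written. Since the paper's Proposition~\ref{prop:normal} is stated only for words already in the form $QaRbSa^{-1}Tb^{-1}U$, your induction needs this preprocessing (or an explicit argument that the polygons $\p_D$ arising in the paper already have a single vertex class) to close. The homology half of your argument is fine once this is in place.
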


\noindent  For an isomorphism $\varphi: H_1(S_g;\mathbb{Z}) \to H_1(S_g;\mathbb{Z})$, we shall denote by $M_{\varphi}$, the matrix of $\varphi$ with respect to the standard homology generators. The following proposition, which is a direct consequence of Theorem~\ref{main} and Lemma~\ref{lem:normalization}, describes a procedure for finding the image of a Type 1 action under $\Psi$. 

\begin{theorem}\label{thm:rep type 2}
Consider the Type 1 action $$D = ((n,0;(c_1,n_1), (c_2,n_2), (c_3,n)).$$ Then  $\displaystyle \Psi(D) = M_{\varphi},  \text{ where } \varphi = f_{\p_D}^{-1}\phi_{\p_D}f_{\p_D},$ with $f_{\p_D}$ as in Lemma ~\ref{lem:normalization}, and $\B(\p_D) \xmapsto{\phi_{\p_D}} \B(\p_D)$ is induced by $(a_i,Q_i) \mapsto (a_j,Q_j),$ where
$$j \equiv \begin{cases}
i+2c_3^{-1} \pmod{2n} , & \text{if } n_1,n_2\neq 2, \text{ and}\\
i+c_3^{-1} \pmod{n},  & \text{otherwise}.
\end{cases}
$$
\end{theorem}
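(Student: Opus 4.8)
The plan is to read the linear action of $D$ on $H_1(S_g;\mathbb{Z})$ directly off the rotation of $\p_D$ guaranteed by Theorem~\ref{main}, and then transport that action into the standard symplectic basis using the normalization isomorphism $f_{\p_D}$ from Lemma~\ref{lem:normalization}. Since, by definition, $\Psi(D)$ is the matrix of the induced map $D_\ast$ on $H_1(S_g;\mathbb{Z})$ expressed in the standard generators $\B(\p_g)$, it suffices to (i) describe $D_\ast$ in terms of the generators $\B(\p_D)$ naturally attached to the polygon $\p_D$, and (ii) perform the change of basis dictated by $f_{\p_D}$.

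For step (i), I would observe that Theorem~\ref{main} realizes $D$ as the rotation $t$ of $\p_D$ by $\theta_D = 2\pi c_3^{-1}/n$, so that on the boundary letters $t$ acts by the index shift $a_i \mapsto a_{i+2c_3^{-1}}$ (respectively $a_i \mapsto a_{i+c_3^{-1}}$) together with the block permutation $Q_r \mapsto Q_{r+c_3^{-1}}$ of the handle factors $Q_r = \prod_s [x_{r,s},y_{r,s}]$. Passing to homology, this rotation permutes the classes of $\B(\p_D)$ according to the same shift, which is precisely the map $\phi_{\p_D}$ induced by $(a_i,Q_i)\mapsto(a_j,Q_j)$. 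The two cases $n_1,n_2 \neq 2$ and otherwise must be treated separately here, since they correspond to $k(D) = 2n$ and $k(D) = n$ and hence to the two index-shift formulas; in each case one must also check that the orientations forced by the side-pairing $a_{2m+1}^{-1}\sim a_{2z}$ (resp.\ $a_{m+1}^{-1}\sim a_z$) are respected, so that $\phi_{\p_D}$ is a genuine (signed) permutation of $\B(\p_D)$ rather than merely a set map on letters.

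For step (ii), Lemma~\ref{lem:normalization} supplies the isomorphism $f_{\p_D} = \prod_{i=1}^g f_{\N^i(\p_D),\N^{i-1}(\p_D)}$ carrying the standard symplectic basis $\B(\p_g)$ of the normalized polygon $\N^g(\p_D)$ to the basis $\B(\p_D)$, together with the explicit formulas from Proposition~\ref{prop:normal} for each single-step generator $x,y$ in terms of the letters $Q,R,S,T,U,a,b$. Because $f_{\p_D}$ is exactly the change of basis between $\B(\p_g)$ and $\B(\p_D)$, and $\phi_{\p_D}$ records $D_\ast$ relative to $\B(\p_D)$, the matrix of $D_\ast$ relative to $\B(\p_g)$ is obtained from $\phi_{\p_D}$ by conjugation, yielding $\varphi = f_{\p_D}^{-1}\phi_{\p_D} f_{\p_D}$; reading off $M_\varphi$ then gives $\Psi(D)$, as claimed.

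The main obstacle I anticipate is the homological bookkeeping underlying the conjugation: one must confirm that the composite $f_{\p_D}$ is a symplectic isomorphism (so that $M_\varphi$ indeed lies in $\mathrm{Sp}(2g;\mathbb{Z})$) and, more delicately, that the conjugation is oriented correctly---i.e.\ that it is $f_{\p_D}^{-1}\phi_{\p_D}f_{\p_D}$ rather than $f_{\p_D}\phi_{\p_D}f_{\p_D}^{-1}$---which hinges on whether $f_{\p_D}$ is read as sending $\B(\p_g)$ to $\B(\p_D)$ or conversely. Tracking the signs introduced at each normalization step through the commutator rearrangements of Proposition~\ref{prop:normal}, and verifying that the rotational symmetry of $\p_D$ is compatible with the order in which the handles are normalized, is where the real care is required; the remainder reduces to the routine change-of-basis computation.
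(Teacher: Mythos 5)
Your proposal is correct and follows exactly the route the paper intends: the paper states this theorem as a direct consequence of Theorem~\ref{main} (which realizes $D$ as the rotation of $\p_D$, giving the index-shift permutation $\phi_{\p_D}$ on $\B(\p_D)$) and Lemma~\ref{lem:normalization} (which supplies the change of basis $f_{\p_D}:\B(\p_g)\to\B(\p_D)$, so that conjugating in the direction $f_{\p_D}^{-1}\phi_{\p_D}f_{\p_D}$ expresses $D_\ast$ in the standard symplectic basis). Your added caveats about sign-tracking through the side-pairings and the orientation of the conjugation are points the paper leaves implicit, but they do not change the argument.
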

\noindent An immediate consequence of Theorem~\ref{thm:rep type 2} is the following corollary, which describes the symplectic representation of a Type 2 action realizable as an $(r,s)$-compatible pair across a pair of fixed points.

\begin{corollary}
Let $D$ be a Type 2 action that is realizable as an $(r,s)$-compatible pair $(D_1,D_2)$ of Type 1 actions  as in Lemma~\ref{lem:comp_pair} such that $g(D)=g(D_1)+g(D_2).$ Then 
\[
\Psi(D)=
\begin{bmatrix}
 \Psi(D_1) & 0 \\
 0 & \Psi(D_2)
\end{bmatrix},
\] where the blocks $\Psi(D_i)$ are obtained using Theorem~\ref{thm:rep type 2}. 
\end{corollary}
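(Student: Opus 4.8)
The key observation is that the hypothesis $g(D)=g(D_1)+g(D_2)$ forces the compatibility to occur across a pair of genuine fixed points. Indeed, by Lemma~\ref{lem:comp_pair} we have $A(D_1,D_2)=n/n_{1,r}$, while by Definition~\ref{def:comp_pair} we have $A(D_1,D_2)=1+g(D)-g(D_1)-g(D_2)$; combining these with the hypothesis gives $n/n_{1,r}=1$, that is, $n_{1,r}=n$. Hence the compatible orbits $O_i\subset S_{g(D_i)}$ each have size one, so (as in Remark~\ref{rem:geom_comp_pair}) the action $D$ is built by removing a single invariant disk around a fixed point $P_i\in S_{g(D_i)}$ and identifying the two resulting boundary circles along a single curve $C$. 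Thus $S_{g(D)}$ is the connected sum $S_{g(D_1)}\,\#_C\,S_{g(D_2)}$, the curve $C$ is $D$-invariant, and $D$ restricts to $D_i$ on each summand.

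The plan is to show that this connected-sum decomposition splits the homology symplectically. First I would note that $C$ is separating in $S_{g(D)}$ and bounds a disk in each of the two pieces before the regluing (it is the boundary of a removed invariant disk around a fixed point), so $[C]=0$ in $H_1(S_{g(D)};\mathbb{Z})$. A Mayer--Vietoris argument, or the standard computation for a connected sum of closed surfaces, then yields a direct sum decomposition
$$H_1(S_{g(D)};\mathbb{Z})\;\cong\;H_1(S_{g(D_1)};\mathbb{Z})\oplus H_1(S_{g(D_2)};\mathbb{Z}).$$
Next I would verify that this decomposition is orthogonal with respect to the algebraic intersection form: since $C$ is separating, any class supported in the first piece admits a representative disjoint from every class supported in the second piece, so their intersection number vanishes. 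Consequently, if $\B(\p_{D_i})$ denotes the standard symplectic basis for $H_1(S_{g(D_i)};\mathbb{Z})$ produced from the polygon $\p_{D_i}$ as in Theorem~\ref{thm:rep type 2} (via the normalization of Lemma~\ref{lem:normalization}), then the ordered union $\B(\p_{D_1})\cup\B(\p_{D_2})$ is a standard symplectic basis for $H_1(S_{g(D)};\mathbb{Z})$.

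With respect to this ordered basis, $\Psi(D)$ can be read off directly. Because $D$ fixes $C$ and therefore preserves the two summands, acting as $D_i$ on $H_1(S_{g(D_i)};\mathbb{Z})$, the matrix of $\Psi(D)$ is block diagonal with blocks $\Psi(D_1)$ and $\Psi(D_2)$, each computed via Theorem~\ref{thm:rep type 2}. This gives the asserted form.

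I expect the main obstacle to be the second paragraph: making rigorous the assertion that a standard symplectic basis of the glued surface may be taken as the union of the two pieces' bases. One must confirm not only that the intersection form is the genuine orthogonal direct sum, but also that the homology generators in $\B(\p_{D_i})$ persist, disjoint from $C$, after the invariant-disk removal and regluing, so that the handle-normalization of Lemma~\ref{lem:normalization} applied separately to $\p_{D_1}$ and $\p_{D_2}$ is compatible with a normalization of the combined surface. These are topological bookkeeping points rather than deep difficulties, but they are where the argument must be handled with care.
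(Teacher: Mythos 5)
Your proposal is correct and follows exactly the route the paper intends: the paper states this as an immediate consequence of Theorem~\ref{thm:rep type 2} and explicitly describes the situation as a compatibility ``across a pair of fixed points,'' which is precisely what your computation $A(D_1,D_2)=1+g(D)-g(D_1)-g(D_2)=1=n/n_{1,r}$ establishes. The remaining steps (gluing along a single separating, null-homologous curve, the orthogonal splitting of $H_1$, and the invariance of each summand under $D$) are the bookkeeping the paper leaves implicit, and you have supplied them correctly.
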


\noindent We motivate the symplectic representations of arbitrary $(r,s)$-compatibilities (between Type 1 actions) with this example. 

\begin{example}\label{eg:type-3}
Consider the $C_6$-action 
$$D=(6,0;(1,2),(1,2),(1,6),(5,6))$$ on $S_3$. It can be realized as the following $(2,2)$-compatible pair $(D_1,D_2)$ of Type 1 actions 
$$D_1=(6,0;(1,2),(1,3),(1,6))\text{ and }D_2= (6,0;(1,2),(2,3),(5,6)).$$  
\begin{figure}[H]
\labellist
\small
\pinlabel $l_1$ at 30 22
\pinlabel $m_1$ at 10 29
\pinlabel $m_2$ at 49 12
\pinlabel $\beta$ at 54 34
\pinlabel $\delta$ at 93 34
\pinlabel $\alpha$ at 74 47
\pinlabel $\gamma$ at 74 21
\pinlabel $l_2=\alpha\beta\gamma\delta$ at 74 34
\pinlabel $l_3$ at 120 22
\pinlabel $m_3$ at 138 30
\pinlabel $D_1\curvearrowright$ at 0 59
\pinlabel $\curvearrowleft D_2$ at 148 59
\endlabellist
\centering
\includegraphics[width = 50 ex]{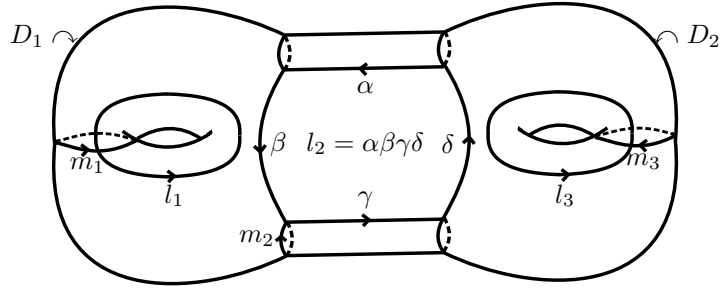}
\caption{A realization of the action $D$ on $S_3$.}
\label{fig:10}
\end{figure}

\noindent Then $\p_{D_i}$ is the hexagon with 
$$W(\p_{D_i})=a_{i,0}a_{i,1}a_{i,2}a_{i,0}^{-1}a_{i,1}^{-1}a_{i,2}^{-1}, \text{ for } i=1,2.$$  Let $\B(\p_D) = \{l_1,m_1,l_2,m_2,l_3,m_3\}$, as indicated in Figure~\ref{fig:10}. For $i=1,2$, $f_i=f_{\p_{D_i}}$ is given by:
$$\begin{array}{ll}
l_1 \xmapsto{f_1} a_{1,0}a_{1,1}, & m_1 \xmapsto{f_1} a_{1,2}a_{1,0}^{-1} \\
l_3 \xmapsto{f_2} a_{2,0}a_{2,1}, & m_3 \xmapsto{f_2} a_{2,2}a_{2,0}^{-1}.
\end{array}$$
The map 
$t_1=t_{\p_{D_1}}$ is induced by
$$
a_{1,0} \mapsto a_{1,1}, \, a_{1,1} \mapsto a_{1,2},\, a_{1,2} \mapsto a_{1,0}^{-1}, 
$$ 
while $t_2=t_{\p_{D_2}}$ is induced by
$$
a_{2,0} \mapsto a_{2,2}^{-1},\, a_{2,1} \mapsto a_{2,0},\, a_{2,2} \mapsto a_{2,1}.
$$
Then by Theorem~\ref{thm:rep type 2},  $f_i^{-1}t_i f_i$ gives the action of $D_i$  on $\B(\p_{D_i})$.
 By the realization of $D$, we have $$\Psi(D)(m_2)=-m_2.$$ To understand the action of $D$ on $l_2$, we write 
 $$l_2=\alpha\beta\gamma\delta, \text{ where } \beta \sim a_{1,0}^{-1}, \delta \sim a_{2,0}^{-1},$$ so that $$\alpha\beta\gamma\delta \to \gamma^{-1}a_{1,1}^{-1}\alpha^{-1}a_{2,2}$$ under the action of $D$ on $H_1(S_3;\mathbb{Z}).$ Consequently, $\Psi(D)$ is given by
$$\small \left[ 
  \begin{array}{rrrrrr}
   1&-1&-1&0&0&0 \\
   1&0&0&0&0&0\\
    0&0&-1&0&0&0\\
    0&0&0&-1&0&0\\
    0&0&0&0&0&1\\
    0&0&1&0&-1&1
  \end{array}\right].$$
\end{example}

\noindent This example motivates the following definition. 

\begin{definition} \label{def:homology_pair}
Let $D$ be a Type 2 action  that arises from a $(r,s)$-compatible pair $(D_1,D_2)$ of Type 1 actions as in Lemma~\ref{lem:comp_pair}. 
\begin{enumerate}[(i)]
\item We define
 $$\B(D) = \B(\p_{D_1}) \sqcup \B(D_1,D_2) \sqcup \B(\p_{D_2}), \text{ where }$$
\begin{gather*} 
\B(\p_{D_1}) = \{l_i,m_i\,|\, 1 \leq i \leq g(D_1)\}, \\
\B(\p_{D_2}) = \{l_i,m_i\,|\, g(D_1)+A(D_1,D_2) \leq i \leq g(D)\}, \text{ and} \\
\B(D_1,D_2) = \{l_i,m_i\,|\ g(D_1)+1 \leq i \leq g(D_1)+A(D_1,D_2)-1\}, 
\end{gather*} 
where $\B(D_1,D_2)$ denotes the set of standard homology generators for the $A(D_1,D_2)-1$ extra genera obtained by gluing annuli across a pair of compatible orbits of size $A(D_1,D_2)$. 
 
 \begin{figure}[H]
\labellist
\small
\pinlabel $\alpha_1$ at 85 55
\pinlabel $\alpha_2$ at 85 42
\pinlabel $\alpha_3$ at 85 29
\pinlabel $\alpha_p$ at 85 14
\pinlabel $m_1'$ at 68 55
\pinlabel $m_2'$ at 68 42
\pinlabel $m_3'$ at 68 29
\pinlabel $m_p'$ at 68 14
\pinlabel $\beta_1$ at 53 49
\pinlabel $\beta_2$ at 53 35
\pinlabel $\delta_1$ at 95 49
\pinlabel $\delta_2$ at 95 35
\endlabellist
\centering
\includegraphics[width = 55 ex]{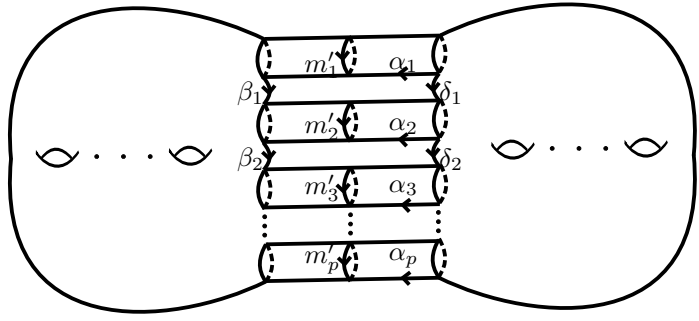}
\caption{A symplectic basis for the realization of an $(r,s)$-compatible pair $(D_1,D_2)$. Here, $p = A(D_1,D_2)$.}
\label{fig:11}
\end{figure}

\item Let $D,\B(D)$ be as in (i); let $O_1=\{v_1,\dots, v_{A(D_1,D_2)}\}$ and $O_2=\{w_1,\dots, w_{A(D_1,D_2)}\}$ correspond to compatible orbits in $\p_{D_1} \text{ and } \p_{D_2}$ respectively. Then we define $$l_{g(D_1)+i}= \alpha_i\beta_i\alpha_{i+1}^{-1}\delta_i^{-1}, \text{ for }1\leq i \leq A(D_1,D_2)-1,$$ and 
$$
m_{g(D_1)+i}= \begin{cases}
m_i' , & \text{if } i=1, \text{ and}\\
-m_{i+1}', &  \text{if } i=A(D_1,D_2)-1\\
m_i'+m_{g(D_1)+i-1}, & \text{otherwise},
\end{cases}$$ 
where the $\alpha_i$ and $m_i'$ are as indicated in Figure~\ref{fig:11}. 
\end{enumerate}
\end{definition}

\noindent Note that for $1\leq i \leq A(D_1,D_2)-1$, the arcs $\beta_i$ and $\delta_i$, as indicated in Figure~\ref{fig:11}, correspond to edge paths in $\p_{D_1}$ and $\p_{D_2}$, which are mapped under the actions of $D_1$ and $D_2$ to edge paths, which we denote by $E_{D_1}(\beta_i)$ and $E_{D_2}(\delta_i)$, respectively.  The following theorem, which gives an explicit description of the symplectic representations of Type 1 actions compatible pairs, is now a direct consequence of Theorem~\ref{thm:rep type 2}.

\begin{theorem}\label{thm:rep_comp_pair}
Let $D,\B(D)$ be as in Definition~\ref{def:homology_pair}.  Then the action of $D$ on $H_1(S_{g(D)};\mathbb{Z})$ is given as follows:
\begin{enumerate}[(i)]
\item for $i=1,2$,\,$\B(\p_{D_i})\xmapsto{\Psi(D)}\B(\p_{D_i})$ is given by Theorem~\ref{thm:rep type 2}.
\item for $l_j,m_j\in \B(D_1,D_2)$,  the action of $\Psi(D)$ is given by:
$$ 
\begin{array}{rcl}
 l_{j} & \mapsto & \alpha_{j+1}E_{D_1}(\beta_j)\alpha_j^{-1}E_{D_2}(\delta_j^{-1}), \text { and } \\ \\
m_{j} & \mapsto &
\begin{cases}
\displaystyle m_{j+1}-m_1,& \text{if }  j\neq g(D_1)+A(D_1,D_2)-1, \text{ and}  \\
-m_1,&\text{otherwise.}
\end{cases}
\end{array}
$$

\end{enumerate}
\end{theorem}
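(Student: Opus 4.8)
The plan is to exploit the locality of the surgery through which $D$ is assembled from the compatible pair $(D_1,D_2)$. By Remark~\ref{rem:geom_comp_pair}, the homeomorphism realizing $D$ is obtained by extending the rotations realizing $D_1$ and $D_2$ across the $p := A(D_1,D_2)$ glued annuli. Consequently its action on $H_1(S_{g(D)};\mathbb{Z})$ respects the direct-sum splitting of $\B(D) = \B(\p_{D_1}) \sqcup \B(D_1,D_2) \sqcup \B(\p_{D_2})$ furnished by Definition~\ref{def:homology_pair}, and I would treat each block separately.

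For part (i), I would first observe that every class in $\B(\p_{D_i})$ is represented by a cycle lying in the subsurface $\Gamma_i \approx S_{g(D_i),p}$, disjoint from the annular gluing region and from the removed invariant disks (which are chosen small and disjoint from representatives of the standard generators). Since $D$ restricts to $D_i$ on $\Gamma_i$ and the inclusion $\Gamma_i \hookrightarrow S_{g(D)}$ carries these generators to the corresponding elements of $\B(D)$, the induced action on this block agrees with $\Psi(D_i)$, which is exactly the content of Theorem~\ref{thm:rep type 2}. This settles (i) once one checks that capping off the disks does not disturb these classes.

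Part (ii) is the substantive computation. Here I would analyze directly how the rotation $D$ moves the arcs $\alpha_i,\beta_i,\delta_i$ and the meridians $m_i'$ of Figure~\ref{fig:11}. The rotation cyclically permutes the $p$ annuli, hence shifts the through-annulus arcs $\alpha_i$ and the meridians $m_i' \mapsto m_{i+1}'$ (indices modulo $p$), while the connecting arcs $\beta_i \subset \p_{D_1}$ and $\delta_i \subset \p_{D_2}$ are carried to their images $E_{D_1}(\beta_i)$ and $E_{D_2}(\delta_i)$ under the respective rotations. Substituting these into $l_{g(D_1)+i} = \alpha_i\beta_i\alpha_{i+1}^{-1}\delta_i^{-1}$ and reducing in the surface yields the stated formula for $\Psi(D)(l_j)$. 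For the meridians I would combine two ingredients: the telescoping definition $m_{g(D_1)+i} = m_1' + \cdots + m_i'$ (for $1 \leq i \leq p-2$) together with $m_{g(D_1)+(p-1)} = -m_p'$, and the homological relation $\sum_{i=1}^{p} m_i' = 0$, valid because the $m_i'$ are homologous to the boundary curves of $\Gamma_1$, which together bound. Applying the shift $m_i' \mapsto m_{i+1}'$ and re-expressing $m_2' + \cdots + m_{i+1}'$ via telescoping gives $\Psi(D)(m_j) = m_{j+1} - m_1$ for interior indices, while $\sum_i m_i' = 0$ forces the degenerate terminal case $\Psi(D)(m_{g(D_1)+(p-1)}) = -m_1$.

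The hard part will be the bookkeeping in (ii): fixing consistent orientation conventions for the arcs and meridians, determining precisely the direction in which the rotation permutes the annuli (hence the exact index shift), and verifying that the telescoping definition of the $m_j$ together with the boundary relation $\sum_i m_i' = 0$ conspire to produce the single uniform formula $m_j \mapsto m_{j+1}-m_1$ with its degenerate last case. Once the permutation of the arcs and meridians is pinned down, the remaining steps are routine homological reductions, and the resulting matrix is automatically symplectic since $D$ is a homeomorphism. As a sanity check I would confirm that these formulas reproduce the explicit $6 \times 6$ matrix computed for the $C_6$-action on $S_3$ realized as the $(2,2)$-compatible pair in the preceding worked example.
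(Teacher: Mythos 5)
Your proposal is correct and follows essentially the same route as the paper: the paper's entire proof is the observation that the realization of $D$ cyclically shifts $(\alpha_i,m_i')\mapsto(\alpha_{i+1},m_{i+1}')$ (indices mod $p=A(D_1,D_2)$), from which the formulas follow. Your write-up simply supplies the bookkeeping the paper leaves implicit --- the restriction of $D$ to the subsurfaces $\Gamma_i$ for part (i), and the telescoping definition of the $m_j$ combined with the boundary relation $\sum_i m_i'=0$ for part (ii).
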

\begin{proof}
We can observe from Figure~\ref{fig:11} that 
$$
(\alpha_i,m_i')\mapsto\begin{cases}
\displaystyle (\alpha_1,m_1'), & \text{if } (i,j)=(p,p-1),\\
(\alpha_{i+1},m_{i+1}') , & \text{otherwise},
\end{cases}$$ with $p=A(D_1,D_2)$, which yields the result.
\end{proof}

\noindent Once again, by repeatedly applying Theorems~\ref{thm:rep_comp_pair} and~\ref{thm:arb_real}, one can obtain the symplectic representations of arbitrary Type 2 actions. However, for brevity, we shall refrain from explicitly stating any results to this effect.

\subsection{Symplectic representations of roots of Dehn twists}
Let $C$ be a nonseparating curve in $S_g$ for $g \geq 2$, and let $t_C \in \Mod(S_g)$ denote the left-handed Dehn twist about $C$. It is known~\cite{km,MS} that the conjugacy class of a root of $t_C$ of degree $n$ corresponds to the conjugacy class of a $C_n$-action on $S_{g-1}$ having two distinguished fixed points whose local rotational angles add up to $2\pi/n$ modulo $2 \pi$.  If such a $C_n$-action exists on $S_{g-1}$, then we remove  invariant discs around these fixed points, and attach a $1$-handle $A$ with a $(1/n)^{th}$ twist across the resultant boundary circles, thereby realizing a degree $n$ root of $t_C$ about the nonseparating curve $C$ in $A$. Consequently, the conjugacy classes of roots of $t_C$ of degree $n$ correspond to actions of the form
\begin{gather}
D= (n, g_0; (c_1,n_1),\ldots,(c_{\ell},n_{\ell})),  \text{ where }  \\ n_{\ell-1}=n_{\ell}=n \text{ and } c_{\ell-1}+c_{\ell} \equiv c_{\ell}c_{\ell-1} \pmod{n }. \nonumber \label{root_real_ds}
\end{gather}
 We call such an action a \textit{root realizing action.}
\noindent We denote the root of $t_C$ of degree $n$ corresponding to a root realizing action $D$ (as in ~\ref{root_real_ds}) by $h_D$. 

In view of the fact that the degree of $h_D$ is odd, and is bounded above by $2g(D)-1$~\cite[Corollary 2.2]{km}, the following result is a direct consequence of Lemma~\ref{lem:self_comp_ds}. 

\begin{proposition} \label{prop:data set_roots}
For $\ell \geq 4$, let $D=\{n, g_0; (c_1,n_1),\ldots,(c_{\ell},n_{\ell})\}$ be a root realizing Type 2 action, as in~\ref{root_real_ds}. Then $D$ can be realized as an $(\ell-1,3)$-compatible pair $(D_1,D_2),$ where
$$\begin{array}{lc}
D_1=(n, g_0; (c_1,n_1),\ldots,(c_{\ell-2},n_{\ell-2}),(c_{\ell-1}c_{\ell},n)), \text{ and } & \\
D_2=(n, 0; (c,n),(c_{\ell-1},n_{\ell-1}),(c_{\ell},n_{\ell})), &
\end{array}$$ satisfying $c \equiv -c_{\ell-1}c_{\ell} \pmod{n}.$
\end{proposition}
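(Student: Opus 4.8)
The plan is to read the statement as a bookkeeping verification: I would show that the tuples $D_1$ and $D_2$ are genuine data sets in the sense of Definition~\ref{defn:data_set}, that they form a compatible pair in the sense of Definition~\ref{def:comp_pair}, and that the action $\lp D_1,D_2,(r,s)\rp$ produced by Lemma~\ref{lem:comp_pair} is exactly $D$. Because Lemma~\ref{lem:comp_pair} tells us that this action is obtained by deleting the two paired entries and concatenating the surviving tuples (with quotient genera added), once the tuple is seen to match that of $D$ the genus matches automatically via the Riemann--Hurwitz equation. Thus almost all of the content is the number theory built on the root-realizing relation $c_{\ell-1}+c_\ell\equiv c_{\ell-1}c_\ell\pmod n$.

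First I would check conditions (i)--(iv) of Definition~\ref{defn:data_set} for $D_1$ and $D_2$. Conditions (i) and (ii) are immediate, and (iii) follows for the two new order-$n$ entries from $\gcd(c_{\ell-1},n)=\gcd(c_\ell,n)=1$, which gives $\gcd(c_{\ell-1}c_\ell,n)=1$ and hence $\gcd(c,n)=1$ since $c\equiv -c_{\ell-1}c_\ell$. The substantive point is condition (iv). For $D_2$, whose three entries all have order $n$, condition (iv) reads $c+c_{\ell-1}+c_\ell\equiv 0\pmod n$, which holds by substituting $c\equiv -c_{\ell-1}c_\ell$ and applying the root-realizing relation. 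For $D_1$, condition (iv) is the sum $\sum_{j=1}^{\ell-2}\tfrac{n}{n_j}c_j+c_{\ell-1}c_\ell$; here I would use condition (iv) of the original $D$ to replace the partial sum $\sum_{j=1}^{\ell-2}\tfrac{n}{n_j}c_j$ by $-(c_{\ell-1}+c_\ell)$, after which the root-realizing relation collapses the expression to $0\pmod n$. This is the step I expect to be the crux, as it is the only place where both hypotheses on $D$ (its own condition (iv) and the root-realizing relation) are used at once.

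Next I would verify the compatibility. The gluing occurs along the two order-$n$ entries that do not survive into $D$: the appended entry $(c_{\ell-1}c_\ell,n)$ of $D_1$ and the entry $(c,n)$ of $D_2$. Condition (i) of Definition~\ref{def:comp_pair} holds with common order $m=n$, and condition (ii) holds because the two rotation numbers satisfy $c_{\ell-1}c_\ell+c\equiv 0\pmod n$ by the defining choice of $c$. Since a data set records a conjugacy class up to permutation of its cone-point entries, these distinguished entries may be placed in the positions named in the statement. Feeding this pair into Lemma~\ref{lem:comp_pair} then deletes the two entries, concatenates $(c_1,n_1),\dots,(c_{\ell-2},n_{\ell-2})$ with $(c_{\ell-1},n_{\ell-1}),(c_\ell,n_\ell)$, and keeps quotient genus $g_0$, reproducing the tuple of $D$; the identity $A(D_1,D_2)=n/n=1$ then records that $g(D)=g(D_1)+g(D_2)$.

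Finally I would confirm that $D_1$ and $D_2$ are honest surface actions of the asserted shape, which is where the two facts quoted just before the statement are used. Since $\deg h_D=n$ is odd, Riemann--Hurwitz applied to $D_2=(n,0;(c,n),(c_{\ell-1},n),(c_\ell,n))$ returns the integer genus $g(D_2)=(n-1)/2$, and the bound $n\le 2g(D)-1$ forces $g(D_1)=g(D)-(n-1)/2\ge 1$, so $D_1$ is a valid positive-genus action. This entire argument is the compatible-pair mirror of the self-compatibility bookkeeping in Lemma~\ref{lem:self_comp_ds}; the only genuinely delicate point is the simultaneous use of condition (iv) and the root-realizing relation in certifying $D_1$, with everything else reducing to direct substitution.
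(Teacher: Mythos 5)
Your proof is correct and matches the paper's approach: the paper states this proposition without a written proof, asserting it as a ``direct consequence'' of the compatible-pair machinery together with the oddness of $n$ and the degree bound, and your verification of conditions (i)--(iv) of Definition~\ref{defn:data_set} for $D_1$ and $D_2$ (with the crux being the simultaneous use of $D$'s condition (iv) and the relation $c_{\ell-1}+c_\ell\equiv c_{\ell-1}c_\ell \pmod{n}$), the compatibility $c_{\ell-1}c_\ell+c\equiv 0\pmod{n}$, and the reconstruction of $D$ via Lemma~\ref{lem:comp_pair} supplies exactly the intended bookkeeping. The only remark worth adding is that the paper's citation of Lemma~\ref{lem:self_comp_ds} (self-compatibility) in the sentence preceding the proposition appears to be a slip for Lemma~\ref{lem:comp_pair}, which is the lemma you correctly invoke, and that the index pair $(\ell-1,3)$ should be read up to the reordering of cone-point entries, as you note.
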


\noindent Note that $D_2$ in Proposition~\ref{prop:data set_roots} is in fact a root realizing Type 1 action. Hence, the problem of understanding the symplectic representation of an arbitrary root of $t_C$ reduces to understanding $\Psi(h_D)$, where $D$ is a root realizing Type 1 action. Let $h_D$ be (topologically) realized by removing invariant discs around the two distinguished fixed points $P$ and $Q$ of the action of $D$ on $S_g$, and then attaching a $1$-handle $A$ with a $(1/n)^{th}$ twist across the resultant boundary circles. Let $\alpha'$ be a path in $S_g$ from $P$ to $Q$ whose homotopy class (rel $\{P,Q\}$) does not intersect the free homotopy class of any essential simple closed curve in $\B(\p_D)$. Now choose an edge path $\alpha$ of the polygon $\p_D$ in the homotopy class of $\alpha'$ (rel $\{P,Q\})$. This brings us to the following theorem, which gives an explicit description of $\Psi(h_D)$. 

\begin{theorem}\label{thm:rep_roots}
Let 
 $$D=(2g+1,0; (a,2g+1), (b,2g+1), (c,2g+1)), $$ be a root realizing Type 1 action. Then $$\Psi(h_D) = \bigl(\begin{smallmatrix} E & B \\ C & I_2
\end{smallmatrix}\bigr),$$ where $E = \Psi(D)$ is a $2g\times 2g$ submatrix, $B$ is a $g\times 2$ submatrix, whose second column is zero and first column is determined by
$$\displaystyle f_{\p_D}^{-1}(\gamma \alpha^{-1}), \text{ where } \gamma = \phi_{\p_D}(\alpha),$$ and  $f_{\p_D} \text{ and } \phi_{\p_D}$ are as in Theorem~\ref{thm:rep type 2}, $I_2$ is the order $2$ identity matrix, and $C$ is uniquely determined by $A$ and $B$.
\end{theorem}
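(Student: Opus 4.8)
The plan is to read off the matrix of $\Psi(h_D)$ on a symplectic basis of $H_1(S_{g+1};\mathbb{Z})$ that is adapted to the handle attachment, determining three of the four blocks geometrically and recovering the last block from the requirement that $\Psi(h_D)$ be symplectic. First I would fix the homology decomposition. Since $h_D$ is built by deleting $h_D$-invariant discs around the two distinguished fixed points $P,Q$ of the Type 1 action $D$ on $S_g$ and attaching the $(1/n)$-twisted $1$-handle $A$, the surface $S_{g+1}$ carries the symplectic basis $\B(\p_D)\sqcup\{\eta,\xi\}$, where $\B(\p_D)$ is the standard basis of the old genus-$g$ part furnished by \lemref{lem:normalization}, $\xi=[C]$ is the core of the twisting region, and $\eta$ is the class of the core arc of $A$ closed up along the chosen edge path $\alpha$. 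Because $\alpha$ is selected so that its class meets none of the curves in $\B(\p_D)$, the intersection form splits as $J=J_1\oplus J_2$, with $J_1$ the form on the old part and $J_2$ the standard form on $\{\eta,\xi\}$; this orthogonality is exactly what justifies the block decomposition asserted in the statement.

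Next I would compute the blocks that can be read off directly. Restricted to the complement of the discs, $h_D$ coincides with $D$, and every generator in $\B(\p_D)$ is represented by a curve lying in this subsurface; hence the old-to-old block is $E=\Psi(D)$, computed via \thmref{thm:rep type 2}. Since $h_D$ fixes the twisting curve $C$ together with its orientation, $\xi\mapsto\xi$, which forces the second column of $B$ to vanish and records the $\xi$-entry of the $I_2$ block. For the remaining handle generator, the core arc of $A$ is carried to itself while its closing path satisfies $\alpha\mapsto\gamma:=\phi_{\p_D}(\alpha)$, so that $\eta\mapsto\eta+[\gamma\alpha^{-1}]$; expressing the loop $\gamma\alpha^{-1}$ in the standard basis through the normalization isomorphism gives the first column of $B$ as $f_{\p_D}^{-1}(\gamma\alpha^{-1})$ and supplies the $\eta$-entry of $I_2$.

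Finally I would pin down $C$ by symplecticity. Writing $\Psi(h_D)=\bigl(\begin{smallmatrix}E&B\\C&I_2\end{smallmatrix}\bigr)$ and imposing $\Psi(h_D)^{T}J\,\Psi(h_D)=J$ with $J=J_1\oplus J_2$, the lower-left block equation reads $J_2C+B^{T}J_1E=0$, whose unique solution is $C=-J_2^{-1}B^{T}J_1E$; as $E$ and $B$ are already determined by the handle data, this is precisely the claimed dependence of $C$. The other block relations then hold automatically: $B^{T}J_1B=0$ because $B$ has a single nonzero column and a class has vanishing self-intersection, and $E^{T}J_1B+C^{T}J_2=0$ follows from the one already used together with $E^{T}J_1E=J_1$. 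As an internal consistency check I would verify that the $\xi$-row of $C$, namely $\langle [\gamma\alpha^{-1}],E\delta\rangle$ for $\delta\in\B(\p_D)$, equals the directly computed intersection $\langle D\delta,\alpha\rangle$, which follows from $D$-invariance of the form and $\langle\alpha,\delta\rangle=0$.

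The step I expect to be the genuine obstacle is the second item for $\eta$: showing that the image of $\eta$ contributes exactly $\eta$ to the new part (coefficient one, with \emph{no} $\xi=[C]$ term) and exactly $f_{\p_D}^{-1}(\gamma\alpha^{-1})$ to the old part. The $(1/n)$-twist of $A$, together with the local rotations at $P$ and $Q$ whose angles sum to $2\pi/n$, could a priori drag the core arc across $C$ and inject a $[C]$-component, so that the surviving transvection in $h_D^{\,n}=t_C$ would have to be distributed fractionally. The content of the claim is that, for the explicit twisted handle and the choice of $\alpha$ disjoint from $\B(\p_D)$, the new-to-new block is integrally equal to $I_2$, and the full transvection $t_C$ is recovered instead through the accumulated interaction of the blocks $B$ and $C$ over the $n$ iterates; making this accounting precise is where the care is needed.
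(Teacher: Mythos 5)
Your proposal is correct and follows essentially the same route as the paper: the paper's proof sets $l_{g+1}=\alpha\beta$ with $\beta$ an arc on the handle $A$, computes $\alpha\beta\mapsto\phi_{\p_D}(\alpha)\beta$ to obtain $\Psi(h_D)(l_{g+1})=f_{\p_D}^{-1}(\gamma\alpha^{-1})+l_{g+1}$, notes $\Psi(h_D)(m_{g+1})=m_{g+1}$ by construction, and recovers $C$ from preservation of the symplectic form, exactly as you do. The subtlety you flag in your last paragraph (a possible $[C]$-component in the image of the handle generator) is passed over in the paper with the single line $\alpha\beta\mapsto\gamma\beta$, so your treatment is, if anything, more careful than the published argument.
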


\begin{proof}
Let $\beta$ be an arc on $A$ joining terminal point of $\alpha$ to the initial point of $\alpha$ so that $\alpha\beta \simeq l_{g+1}$. Then $\alpha\beta \to \phi_{\p_D}(\alpha)\beta$ under $\Psi(h_D)$, and hence we have, $$\Psi(h_D)(l_{g+1})=f_{\p_D}^{-1}(\gamma\alpha^{-1})+l_{g+1},\text{ where }\gamma=\phi_{\p_D}(\alpha). $$ Also, by construction $\Psi(h_D)(m_{g+1})=m_{g+1}$. From this it follows that $\Psi(h_D)$ has the desired form. Since $\Psi(h_D)$ preserves the symplectic form, $C$ is uniquely determined by $E$ and $B$, and this completes the proof.   
\end{proof}

\noindent Let $C$ be a separating curve in $S_g$ so that $S_g = S_{g_1} \#_C S_{g_2}$. Then it is known~\cite{KR} that a root $h$ of $t_C$ of degree $n$ corresponds to a pair $(D_1,D_2)$ of cyclic actions $D_i$ on the $S_{g_i}$, for $i = 1,2$, having distinguished fixed points $P_i$ such that the local rotational angles $\theta_i$ induced around the $P_i$ (by the $D_i$) satisfy $$\theta_1 + \theta_2 \equiv 2\pi/n \pmod{2\pi}, \text{ where } n =\text{lcm}(n(D_1),n(D_2)).$$ We will call such a pair of actions a \textit{root realizing action pair of degree n.} Since the induced action of $t_C$ on $H_1(S_g;\mathbb{Z})$ is trivial, it follows immediately that:
\begin{theorem}
\label{thm:sep_curve_root_hom}
Let $C$ be a separating curve in $S_g$, and let $h$ be a root of $t_C$ of degree $n$ that corresponds to a root realizing action pair $(D_1,D_2)$ of degree $n$. Then 
$$\Psi(h) = \begin{bmatrix} 
                    \Psi(D_1) & 0 \\
                    0 & \Psi(D_2)
                    \end{bmatrix}.$$
\end{theorem}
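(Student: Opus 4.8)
The plan is to exploit the fact that a separating curve is null-homologous, so that both the homology of $S_g$ and the homological action of $h$ split along $C$. First I would record the homology decomposition. Writing $S_g = S_{g_1}\#_C S_{g_2}$ and cutting along $C$ expresses $S_g$ as the union of the two subsurfaces $F_i \cong S_{g_i,1}$ glued along their common boundary $C$. Since $[C]=0$ in $H_1(S_g;\mathbb{Z})$, the Mayer--Vietoris sequence for this decomposition shows that the inclusions induce an isomorphism $H_1(S_g;\mathbb{Z}) \cong H_1(S_{g_1};\mathbb{Z}) \oplus H_1(S_{g_2};\mathbb{Z})$. I would then fix a standard symplectic basis $\B$ for $H_1(S_g;\mathbb{Z})$ that is the disjoint union of standard symplectic bases $\B_i$ for the two summands, with each class in $\B_i$ represented by a cycle supported in the interior of $F_i$, away from a collar of $C$.

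Next I would argue that $\Psi(h)$ is block diagonal with respect to this splitting. By construction, $h$ realizes the root realizing action pair $(D_1,D_2)$: it restricts to an extension of $D_i$ on each side $F_i$, and its only modification relative to the untwisted gluing is the fractional twist supported in a collar of $C$. In particular $h$ preserves each side $F_i$ setwise, so $h_*$ carries the summand $H_1(S_{g_i};\mathbb{Z})$ into itself. This forces $\Psi(h) = \bigl(\begin{smallmatrix} M_1 & 0 \\ 0 & M_2 \end{smallmatrix}\bigr)$ for some $M_i \in \mathrm{Sp}(2g_i;\mathbb{Z})$.

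It then remains to identify $M_i$ with $\Psi(D_i)$, which is the heart of the argument. The restriction $h\vert_{F_i}$ differs from $D_i\vert_{F_i}$ only by the fractional twist localized in the collar of $C$. Because this twisting is supported about the null-homologous curve $C$---equivalently, because the full twist $t_C = h^n$ acts trivially on $H_1(S_g;\mathbb{Z})$, as noted above---it contributes nothing to the homological action on classes represented away from the collar. Since every class of $\B_i$ admits such a representative, $h_*$ agrees with $(D_i)_*$ on $\B_i$, giving $M_i = \Psi(D_i)$ and hence the asserted block form, with the blocks $\Psi(D_i)$ computable by the procedure established earlier.

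The main obstacle is this last identification: one must ensure that the partial twist introduced in the equivariant gluing does not perturb either diagonal block. I expect this to follow cleanly from $[C]=0$, which lets the symplectic generators be isotoped off the collar on which $h$ and $D_i$ disagree; the splitting of the homology and the resulting block-diagonal shape are then formal consequences.
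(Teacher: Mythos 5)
Your proposal is correct and follows essentially the same route as the paper, which justifies the block-diagonal form in one line by observing that $t_C$ (for $C$ separating, hence null-homologous) acts trivially on $H_1(S_g;\mathbb{Z})$, so that the homology splits as $H_1(S_{g_1};\mathbb{Z})\oplus H_1(S_{g_2};\mathbb{Z})$ and the fractional twist in the collar cannot perturb either block. Your write-up simply supplies the Mayer--Vietoris splitting and the isotoping-off-the-collar details that the paper leaves implicit.
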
 

\noindent We conclude this subsection with the following interesting example.
\begin{example}\label{eg:root_rep}
 Consider the \textit{root realizing data set}
$$D=(5,0;(1,5),(2,5),(3,5),(4,5)).$$ 
It  can be realized by the following $(3,3)$-compatible type-2 data sets
$$ D_1=(5,0;(1,5),(2,5),(2,5))\text{ and }D_2=(5,0;(3,5),(3,5),(4,5)),$$
where $D_2$ is a \textit{root realizing data set}. 

 Let $\{l_5, m_5\}$ be the standard homology generator for the extra genus obtained by attaching a 1-handle with a  $1/5^{th}$ twist, across the boundary circles obtained by removing invariant discs around fixed points with local rotational angles $4\pi/5, 8\pi/5$, so that $\B(\p_D)\cup \{l_5 , m_5\}$ are standard homology generators for $S_5$. 
We have $$W(\p_{D_i})=a_{i,0}a_{i,1}\ldots a_{i,9},\, a_{i,2m+1}^{-1}\sim a_{i,2m+8},~1\leq m\leq 4,\, i=1,2.$$
For $i=1,2,\, 
f_i=f_{\p_{D_i}}$ is given  by:
$$
f_1(l_1)=a_{1,8}a_{1,6}^{-1}, f_1(m_1)=a_{1,4}^{-1}a_{1,8}, f_1(l_2)=a_{1,2}^{-1}a_{1,8}, f_1(m_2)=a_{1,8}a_{1,0}^{-1}a_{1,4}a_{1,2}^{-1}$$ and
$$f_2(l_3)=a_{2,8}a_{2,6}^{-1}, f_2(m_3)=a_{2,4}^{-1}a_{2,8}, f_2(l_4)=a_{2,2}^{-1}a_{2,8}, f_2(m_4)=a_{2,8}a_{2,0}^{-1}a_{2,4}a_{2,2}^{-1}. 
$$
For i=1,2, $\phi_i=\phi_{\G(D_i)}$ is given by:
$$
a_{1,j}\mapsto a_{1,j+6}, a_{2,j}\mapsto a_{2,j+4}, \text{ for }0\leq j\leq 9 
$$ 
Then by Theorem~\ref{thm:rep type 2},  $f_i^{-1}t_i f_i$ gives the action of $h_D$  on $\B(\p_{D_i})$.
Hence, we have\\

$l_i\mapsto\left\{ \begin{array}
{r@{\quad : \quad}l}
-m_1+l_2 & i=1\\ -m_1&  i=2\\ -2l_4+m_3+m_4 & i=3\\ l_3-l_4 & i=4
\end{array} \right.$\\\\

$m_i\mapsto\left\{ \begin{array}
{r@{\quad : \quad}l}
-l_2+m_2 & i=1\\l_1-2m_1+l_2-m_2&  i=2\\ -l_4 & i=3\\ l_3+m_3-l_4 & i=4
\end{array} \right.$\\

To understand the action of $h_D$ on $l_5$, we write $l_5=a_{2,0}\alpha$, where $\alpha$ is an arc on the handle with $1/5^{th}$ twist, joining end points of $a_{2,0}$, so that 
$$ a_{2,0}\alpha \to a_{2,4}\alpha \sim a_{2,4}a_{2,0}^{-1}a_{2,0}\alpha=-l_4+m_4+l_5,$$ under the action of $h_D$ on $H_1(S_5;\mathbb{Z}).$
Then by Theorem~\ref{thm:rep_roots}, $\Psi(h_D)$ is given by
\[\begin{bmatrix} \Psi(D_1) & ~ & ~  \\ ~ & \Psi(D_2) & B  \\ ~ & C &  I \\
\end{bmatrix},\]
 $$ \text{ where},\,\displaystyle B^t=\bigl(\begin{smallmatrix} 0 & 0 & 0 & 0\\ 0 & 0 & -1 & 1
\end{smallmatrix}\bigr) \text{ and } C=\bigl(\begin{smallmatrix} 0 & 0 & 0 & 0\\ -1 & -1 & -1 & 1
\end{smallmatrix}\bigr).$$
\end{example}

\subsection{Representations of roots of Dehn twists about multicurves}
For a multicurve $\mathcal{C}$ in $S_g$, conditions for the existence of a root $h$ of $t_{\mathcal{C}}$ of degree $n$ were derived in~\cite{KP}.  In general, such a root $h$ induces a nontrivial permutation of the curves in $\mathcal{C}$, and a finite order map $D_h$ on $S_g (C)$ (as in Theorem~\ref{thm:arb_real}). Moreover, the components of $S_g(\C)$ may themselves form orbits (called \textit{surface orbits}) under $D_h$. It is clear that the components of a single surface orbit must be homeomorphic to each other, so if a surface orbit has $m$ components homeomorphic to $S_g$, we denote it by $\S_g(m)$. Thus, a root of $t_{\C}$ induces a decomposition of $S_g(\C)$ in the form
$$
S_g(\C) = \sqcup_{i=1}^s \S_{g_i}(m_i).
$$
Furthermore, the restriction $D_i'$ of $h$ to each $\S_{g_i}(m_i)$ is the composition of a cyclic action $D_i$ on $S_{g_i}$ with a cyclical permutation of the components of $\S_{g_i}(m_i)$. The $D_i'$ must be pairwise \textit{compatible} across distinguished orbits, in the sense that the orbits must be of the same size, and the induced angles of rotation associated with the orbits must add up to allow the $D_i'$ to extend to a root. Using Theorem~\ref{thm:arb_real}, we can build (i.e. realize) each $D_i$ from finitely many pairwise compatibilities between irreducible finite order actions. Finally, by obtaining a suitable extension of Theorems~\ref{thm:rep_roots} and~\ref{thm:sep_curve_root_hom} to multicurves, one can obtain $\Psi(h)$. 

\section*{Acknowledgements}
The authors would like to thank Siddhartha Sarkar for helpful discussions on the symplectic representations of cyclic actions. 

\bibliographystyle{plain}      
\bibliography{geom_real}  
\end{document}